\documentclass[12pt,leqno]{article} 
\usepackage{amsmath}
\usepackage{amssymb}
\usepackage{amsthm} 
\usepackage{epsf}
\usepackage{graphicx}
\usepackage{esint} 
\usepackage{dsfont}
\usepackage{hyperref} 
\hypersetup{pdfpagemode=FullScreen,  colorlinks=true} 
\usepackage{enumerate} 

\numberwithin{equation}{section}

\newtheorem{lem}{Lemma}[section] 

\newtheorem{defi}[lem]{Definition}
\newtheorem{thm}[lem]{Theorem}

\theoremstyle{remark}
\newtheorem{rem}[lem]{\bf Remark}
\newtheorem{qn}[lem]{\bf Question}

\newcommand{\nn}{\nonumber}
\newcommand{\ms}{\medskip}
\newcommand{\msi}{\par\medskip\noindent}

\newcommand{\R}{\mathbb{R}}

\renewcommand{\H}{\mathcal H}

\newcommand{\1}{\mathds{1}}

\newcommand{\bZ}{\mathbb {Z}}

\renewcommand{\d}{\partial}

\newcommand{\sm}{\setminus}
\newcommand{\supp}{\mathrm{supp}}

\newcommand{\wt}{\widetilde}
\newcommand{\wh}{\widehat}
\newcommand{\ol}{\overline}

\newcommand{\cE}{{\cal E}}

\newcommand\Bk{\color{black}}

\begin{document}

\title{On a control problem for two orthogonal nets of curves in the plane}

 \author{Guy David 
 \thanks{GD has been partially supported by the Simons Foundation through through the Simons Collaboration on Localisation of Waves, 601941 GD, and the Simons Initiative on the Geometry of Flows (Grant Award ID BD-Targeted-00017375 GD).} \, and 
 Polina Perstneva \thanks{PP has been partially supported by Fondation Math\'ematique Jaques Hadamard, the program L'Or\'eal-UNESCO 
 Pour les Femmes et la Science, and Swiss NSF.}} 
 
 \date{\today}

\maketitle

\begin{abstract}
We construct nets of curves in a rectangle $R$ with special properties. The net is formed by two families of curves, ``horizontal'' and ``vertical'', each family foliates $R$, and every horizontal curve meets every vertical curve orthogonally. Moreover, every horizontal curve started at the left edge of $R$ arrives to the right edge of $R$ at the same height, and every vertical curve started at the bottom edge of $R$ arrives to the top edge of $R$ at a prescribed in advance longitude. The latter longitude function as a map from bottom to top can be chosen in advance as any small perturbation of the identity.

Our motivation for this was to design some elliptic divergence form
operators, for instance on one of the domains bounded by a Reifenberg flat curve in the plane, with the scalar coefficient and the corresponding elliptic measure being absolutely continuous with respect to the natural measure on the boundary (an object of a separate paper). 

The construction of our net of curves in a rectangle can also be interpreted as control problem, has an interest on its own, and, likely, has other applications.

~\ 

{\bf Key words}: Control, orthogonal curves, elliptic measure.
\end{abstract} 

\tableofcontents

\section{Introduction}
\label{S1}

The main goal of this article is to construct (a net of) two mutually orthogonal families of curves in a rectangle, with the properties precisely described below in Subsection \ref{S1S1}. The motivation for the problem is presented in Subsection \ref{S1S3}, after a brief discussion directly relevant to our main Theorem \ref{t1}. This discussion is in a form of multiple remarks, and is localised in a short Subsection \ref{S1S2}.

\subsection{Orthogonal nets of curves and the main statement} 
\label{S1S1}

We are given a rectangle $R = I \times J \in \R^2$ 
and a function $f : I \to I$, a small perturbation of the identity,
and we would like to construct a foliation of $R$ by green curves $G_t$ (roughly horizontal), 
and a foliation of $R$ by red curves $R_x$ (roughly vertical), so that the red curves always cross the green ones orthogonally, all the curves are smooth enough,
and in addition, writing $I = [a,b]$ and $J = [c,d]$, 
\begin{equation}\label{1a1}
\text{ For $t\in J$, the green curve $G_t$ that starts at $(a, t)$ ends at $(b,t)$}
\end{equation}
and 
\begin{equation}\label{1a2}
\text{ For $x\in I$, the red curve $R_x$ that starts at $(x, c)$ ends at $(f(x),d)$.}
\end{equation}
We also require (naturally) that $\d R$ is composed of red and green curves, namely
\begin{equation}\label{1a3}
G_c = I \times \{ c \}, \  G_d = I \times \{ d \},  \ R_a = \{ a \} \times J,  \  \ R_b = \{ b \} \times J.
\end{equation}
See Figure \ref{fig1}. 
\begin{figure}[!h]
  \centering 
 \includegraphics[width=9cm]{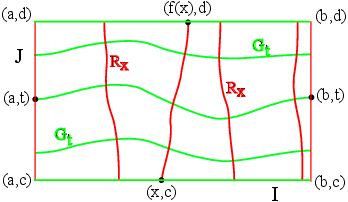} 
  \caption{Our requirements on green and red curves}
\label{fig1}
\end{figure}

\medskip

To be more specific about the assumptions on $f$, we extend it to $\R$ by taking $f(x) = x$ for $x \in \R \sm I$,
and then define $\psi : \R \to \R$ by $\psi(x) = f(x) - x$ for $x\in \R$. We require that for some integer
$k \geq 2$ 
\begin{equation}\label{1a4}
\psi : \R \to \R \text{ is supported in $I$ and of class $C^k$ with } ||\psi||_k \leq \varepsilon
\end{equation}
for some small enough $\varepsilon > 0$. Here we use the homogeneous norm
\begin{equation}\label{1a5}
N_k(\psi) = ||\psi||_k 
= \sup_{x \in \R}  |\psi^{(k)}(x)|.
\end{equation}
Notice, however, that since $\psi$ is supported on $I$, we also have control over the previous derivatives, namely
\begin{equation}\label{1a6}
N_{j}(\psi) \leq (|I|/2)^{k-j} N_k(\psi) \ \text{ for } 0 \leq j < k.
\end{equation}
Also notice that, for $\varepsilon$ small, $f$ is an increasing bijection of $I$.

\medskip

Before we state our main result, we 
also need to make much more explicit what we mean by foliations of $R$ by roughly
horizontal (or vertical) curves. We will require every green curve $G_t$, $t \in J$,
to be the (intersection with $R$ of the) graph of some $C^m$ function $g_t : \R \to \R$, such
that $g_t(x) = t$ for $x \in \R \sm I$ (so that we don't have potential gluing problems along the vertical edges of $R$).
All the graphs $G_t$ that we construct will be $10^{-1}$-Lipschitz, say (i.e., $N_1(g_t) \leq 10^{-1}$).
We may also extend the curves and define $g_t(x) \equiv x$ for $t \notin J$.

Similarly, each red curve $R_x$, $x \in I$, is the graph over the vertical axis (i.e., with the coordinates exchanged) of a 
$10^{-1}$-Lipschitz and $C^m$ function $r_x : \R \to \R$, with $r_x(t) = x$ for $t\leq c$ and 
$r_x(t) = f(x)$ for $t \geq d$. Again, we can extend the foliation beyond $R$ and define $r_x(t) = t$ for $x \notin I$.

We complete these assumptions with a strong continuity constraint on the maps $t \mapsto g_t$ and $x \mapsto r_x$
that we take as our definition of the ``foliation'' constraint. We want the functions $(x,t) \mapsto g_t(x)$
and $(x,t) \mapsto r_x(t)$ to be of class $C^m$ (this looks too strong, but we will
get it easily) and that each point $(X,T)$ of $R$ lies in a single $G_t$ and a single $R_x$. 
Strictly speaking, this means that there is only one $t$ such that $T = g_t(X)$ and only one $x$ such that $X = r_x(T)$.

This allows us to define a nearly horizontal vector field $(X,T) \mapsto e(X,T)$ tangent at $(X,T)$ to the unique $G_t$ that goes through $(X,T)$, and points to the right, namely
\begin{equation} \label{1c7}
e(X,T) = (1+g_t'(X)^2)^{-1/2} \big(1, g_t'(X)\big), \text{ where $t$ is such that $T = g_t(X)$,}
\end{equation}
and similarly a nearly vertical vector field $n(X,T)$ tangent at $(X,T)$ to the unique $R_x$ that
goes through $(X,T)$, and points upwards, namely
\begin{equation} \label{1c8}
n(X,T) = (1+r_x'(T)^2)^{-1/2} \big(r_x'(T), 1\big), \text{ where $x$ is such that $X = r_x(T)$.}
\end{equation}
This completes the foliation part of the definition.

Then there is the orthogonality condition that $G_t$ is orthogonal to $R_x$ at their point of intersection
(it is unique, directly by the small graph descriptions). In terms of our vector fields, this means that 
\begin{equation} \label{1a7}
e(X,T) \perp n(X,T)  \ \text{ for } (X,T) \in R.
\end{equation}

We tried to make our definitions work also when $m = 1$, in which case $e$ and $n$ are only expected to be continuous 
(because the mapping $(X,T) \mapsto (x,t)$ is continuous, as the inverse of a continuous bijective mapping
$(x,t) \mapsto G_t \cap R_x$ on the compact set $R$). 
But as soon as $m \geq 2$, our proof shows that $e$ and $n$ are of class $C^{m-1}$, and then we could have
used the Lipschitz vector fields to define the green and red curves as integral curves of $e$ and $n$, thus stating the theorem
below only in terms of vector fields. We find it interesting to be able to think about both descriptions, but will not try
to translate precisely the assumptions on the curves into assumptions on the vector fields.

\medskip

We are now ready to state our main result about the existence of a foliation of a rectangle $R$ as described above, with properties
\eqref{1a1} - \eqref{1a3} and \eqref{1a6} - \eqref{1a7}), 
where the function $f$ satisfies \eqref{1a4}.

\begin{thm}\label{t1} 
Let $R = [0,1] \times J \in \R^2$, 
with $|J| \leq 1$, 
$k \geq 2$, and let $m$ be the integer part of $k/2$. 
Let $f : \R \to \R$, $\psi(x) = f(x) - x$ be as above, and suppose that 
\eqref{1a4} holds for some small enough $\varepsilon = \varepsilon(k)$. 
Then we can find two orthogonal (in the sense of \eqref{1a7}) foliations of $R$ by curves $G_t$, $R_x$, of class $C^{m}$, 
with the properties \eqref{1a1}, \eqref{1a2}, and \eqref{1a3}.
The term foliation is explained just above, and we even get that the mappings
$(x,t) \mapsto g_t(x), r_x(t)$ are $C^{m}$ in both variables.
Furthermore, the $C^{m}$ norm of these functions 
is less than  $C |J|^{-m-1} N_k(\psi)^{1/2}$, 
where $C$ does not depend 
on $|J|$. 
\end{thm}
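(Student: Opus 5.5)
The plan is to construct the green family first, as $g_t(x) = t + h(x,t)$ with $h$ small and compactly supported in $(0,1)\times \mathrm{int}(J)$, so that \eqref{1a1} and \eqref{1a3} hold automatically. The red curves are then forced on us: they are the integral curves of the field orthogonal to $e$. In graph form, $r_x$ solves
\[
\frac{d r_x}{dT}(T) = -\,\partial_X g\big(r_x(T), t(r_x(T),T)\big), \qquad r_x(c) = x,
\]
where $t(X,T)$ is the green leaf through $(X,T)$. The whole problem is therefore to choose $h$ so that the endpoint map $x \mapsto r_x(d)$ equals $f$. Equivalently, the displacement map $\Phi(h)(x) = r_x(d) - x$ must equal $\psi$. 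Smoothness and the foliation property of the red curves (uniqueness of the leaf through each point, $C^m$ dependence on $(x,t)$) then follow from ODE theory, since the field is $C^{m-1}$ with a small Lipschitz constant.

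\emph{Expanding $\Phi$.} To first order,
\[
\Phi(h)(x) \approx -\int_J \partial_x h(x,t)\,dt = -\partial_x \int_J h(x,t)\,dt .
\]
This is a derivative in $x$ of a function vanishing at $x=0,1$, so the linear part of $\Phi$ has zero mean: $\int_0^1 \Phi_{\rm lin}(h) = 0$. The linear part can therefore produce the mean-zero part of $\psi$, for instance with $h = \varphi(t)H(x)$, $H' = -(\psi - \bar\psi\,\theta)/\int\varphi$, where $\theta$ is a fixed bump with $\int\theta = 1$. It can never produce the mean
\[
\bar\psi = \int_0^1 \psi ,
\]
which is in general nonzero even though $f$ is a bijection of $I$. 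The mean must come from the quadratic term of $\Phi$.

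\emph{Producing the mean.} I will compute the second-order term of $\Phi$ and exhibit a family of oscillating perturbations of amplitude $\sim |\bar\psi|^{1/2}$ whose quadratic contribution has a nonzero mean of either sign. One candidate is $h = \delta\,[\alpha(t)A(x) + \beta(t)B(x)]$ with $\alpha,\beta$ out of phase. The idea is that a red curve crossing a bent green layer drifts by a quantity like $\int \partial_x h\,\partial_t h$ or a commutator term. Scaling $\delta \sim |\bar\psi|^{1/2}$ then gives exactly the claimed $N_k(\psi)^{1/2}$ size. The factor $|J|^{-m-1}$ comes from compressing the $t$-profiles into $J$.

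\emph{Solving the full equation.} With this ansatz I set up a fixed-point or Newton scheme for $\Phi(h) = \psi$. I parametrize $h = h_{\rm lin}(\psi') + h_{\rm quad}(s)$, with a scalar $s$ controlling the mean and $\psi'$ a mean-zero correction. The scalar part can be solved by a one-dimensional intermediate-value or continuity argument in $s$, since the quadratic term is signed by choosing the phase. The mean-zero part is solved by inverting the explicit linear operator and absorbing the higher-order remainders, which are small in $C^{m}$.

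\emph{Main obstacle.} The hard step is this last one: the loss of derivatives. Each application of $\Phi$ involves $\partial_x h$, so inverting the linear part costs one derivative. Moreover the square root $|\bar\psi|^{1/2}$ means that the remainder terms are only controlled by $N_k(\psi)^{1/2}$ in norms of roughly half the order. This is why only $m = \lfloor k/2\rfloor$ derivatives survive. I would first try to avoid a Nash–Moser scheme by making the construction almost explicit. The green curves would be given by formulas in $\psi$ and $\sqrt{|\bar\psi|}$, with $C^k$ regularity of $\psi$ spent to gain room. The small remaining error would then be corrected by a contraction in $C^m$, using the gap between $k$ and $2m$ to control it.
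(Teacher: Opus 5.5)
Your set-up is correct, and it matches the paper's own diagnosis. The linearized endpoint map $h\mapsto -\partial_x\int_J h\,dt$ only reaches mean-zero targets (the first obstruction in Remark \ref{r1a2}). The mean must therefore come from a second-order, commutator-type effect, which is why amplitudes of order $N_k(\psi)^{1/2}$ appear. Your out-of-phase ansatz does give a quadratic mean of either sign.

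The gap is the step you defer to the end, and that step is the whole difficulty (the second obstruction in Remark \ref{r1a2}). First, inverting $L$ does not cost a derivative; it gains one, since $H'=-(\psi-\bar\psi\,\theta)/\int\varphi$. The loss sits in $Q(h)=\Phi(h)-Lh$. For $h=\varphi(t)H(x)+h_{\mathrm{quad}}$, the top-order part of $\partial_x^{j-1}Q(h)$ is
\[
-\int_J\Big[\varphi\big(\tilde t_x(T)\big)\,H^{(j)}\big(r_x(T)\big)\,\big(\partial_x r_x(T)\big)^{j-1}-\varphi(T)\,H^{(j)}(x)\Big]\,dT ,
\]
where $\tilde t_x(T)$ is the green label of the point $(r_x(T),T)$. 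This term is $O(\|H\|_{C^j})$, and it becomes quadratically small only if you spend $H^{(j+1)}$.

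Write $\Pi u=u-(\int_0^1 u)\,\theta$. The top-order part of your map $H\mapsto L^{-1}\Pi(\psi-Q(h))$ is then $(I-\mathcal A)H^{(j)}$. Here $\mathcal A$ is a weighted average of $H^{(j)}$ along the red curves, taken over a horizontal window as wide as their drift, which is up to $N_k(\psi)^{1/2}$ once $h_{\mathrm{quad}}$ is present. On oscillations at frequencies above the inverse drift, $\mathcal A$ is nearly zero, so $I-\mathcal A$ is close to the identity rather than small.

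Consequences:
\begin{itemize}
\item There is no invariant ball and no contraction in any fixed $C^j$.
\item The device ``bounded in a strong norm, contracting in a weak norm'' does not help, because the strong-norm bound itself does not close.
\item You would need a genuine Nash--Moser scheme, whose approximate inverse (adapted to the current red foliation) again loses a derivative, or a separation of frequencies into thin $t$-layers. Neither is provided.
\item Your account of $m=\lfloor k/2\rfloor$ is not derived from any step: $k-2m\in\{0,1\}$ cannot pay for a loss that recurs at every iteration.
\item With green curves only $C^m$, your ODE gives red curves that are only $C^{m-1}$ in $x$, whereas the statement needs $C^m$.
\end{itemize}

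The missing idea is the paper's way of avoiding the nonlinear PDE altogether. It only uses building blocks whose nets are exact: antisymmetric targets in $\Psi(I)$, whose curves are explicit by symmetry (Lemmas \ref{l1a5} and \ref{l1a7}). These are stacked in horizontal layers so that endpoint maps compose (Lemma \ref{l1a4}). All approximation is done on the target map, in sup norm, while $N_k$ merely stays bounded. The steps are:
\begin{itemize}
\item curtain maps;
\item commutators $g^{-1}f^{-1}gf$ of antisymmetric bumps, with $t_j\sim\mu_j^{1/2}$, to remove the $V_0$-component;
\item odd wavelets, generation by generation. The single derivative loss, in \eqref{7a19}, occurs only once and is paid for by wavelet homogeneity.
\end{itemize}
The residual is then rescaled by $\ell$ and the procedure is repeated in thinner layers $J_n$. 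The exponent $\lfloor k/2\rfloor$ comes from \eqref{9a5}: square-root amplitudes $a_j^{1/2}\le\tau^{j/2}a_0^{1/2}$ set against the rescaling factor $\ell^{(1-m)j}$. Section \ref{S12} gives an exact finite alternative, via Tsuboi's theorem and paired slots.
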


Our bound in $|J|$ is probably not optimal. Also, in our proof $\varepsilon(k)$ 
really depends on $|J|$; 
see for instance the assumptions for Lemma \ref{l1a5}. Note here also that 
the assumption $|I| \ge |J|$ may be dropped as long as one cares about constructing the foliations with curves, 
but does not worry so much about the bounds.

For our application to elliptic operators, we want our curves to be smooth enough (for instance, $m \geq 2$),
so we need to take $k\geq 4$, say, but in fact it would not disturb us to require a much smoother $f$ (and with a very small norm too). 
We do not claim that our conditions on $m$ and $k$ are optimal, and we lose some derivatives 
(more than we expected: see Remark~\ref{r1z5}), as we also lose on the smallness of the norms.

\subsection{Some remarks and questions}
\label{S1S2}

\begin{rem} [\bf About some easy extensions on the plane]  \label{r1z2} 
We stated Theorem \ref{t1} for small data $f$, but Lemma \ref{l1a4} below gives a solution
for the composition $f \circ g$ as soon as we have solutions for $f$ and for $g$.
This means that Theorem \ref{t1} immediately extends to increasing $C^k$
diffeoeomorphisms of $I$, because they can be decomposed into a finite composition of small mappings $f_k$
(i.e., with $N_k(f_k - Id)$ as small as we want) that satisfy \eqref{1a4}. 
Of course the number of pieces, and then the $C^m$ constants for each curve, diverge, 
because we are forced to take very thin intervals $J_k$.

\medskip
By dilation invariance, Theorem \ref{t1} stays true when we multiply the lengths of $I$ and $J$ by the same constant $\lambda > 0$. Then $\varepsilon(k)$ stays the same (depending only on $|I|/|J|)$, but our bounds for the $C^{m}$ norm become different
(to account for dilations).

In addition, we can get a bound in Theorem \ref{t1} with $|I|$ larger than $[0, 1]$ without using the dilatation (and consequently without having to add the parameter $\lambda$ to the norm estimate), but 
provided that in addition to \eqref{1a4}, we require that 
\begin{equation} \label{1b8}
N_0(\psi) : = 
||\psi||_\infty \leq \varepsilon.
\end{equation}
And the same thing is true on the strip $R = \R \times J$, with any $C^k$ function $f$ such that
\begin{equation} \label{1b9}
||\psi||_\infty + ||\psi^{(k)}||_\infty 
\leq \varepsilon,
\end{equation}
where we drop the requirement \eqref{1a1} (but still require the $G_t$ and $R_x$ to foliate $R$). We do this by applying our curtain trick from Section \ref{S3}: it allows us to treat separately some (potentially infinite) number of disjoint horizontal intervals of length not greater than one (and apply to them separately Theorem \ref{t1} with the restrictions $I = [0, 1]$ and $|J| < 1$).
\end{rem}

\begin{rem} [\bf About the problem being underdetermined]
The fact that the question of Theorem \ref{t1} is brutally underdetermined partially accounts for its difficulty. For a given $f$ there should be lots of different nets of curves that solve the problem; the complication is to find reasonable ones, that we construct almost explicitly. More precisely, we shall try to solve the problem by writing $f$ as a composition of simpler mappings, for which we have specific solutions.
\end{rem}

\begin{rem}[\bf About control] \label{r1a2} 
We find it interesting to see Theorem \ref{t1} as a control result, 
where we need to find the smooth vector field $e(x,t)$ 
(if $m \geq 2$,  this is equivalent to finding the green and red curves), so that the outputs of two transformations,
namely following the vector field $e$ to go from $(a,t)$ on the left of $R$ to a point $(b, g(t))$ on the right,
and following the orthogonal vector field $n$ to go from $(c,x)$ on the bottom of $R$ to a point $(d, f(x))$
on the top, are given ones, namely the identity (for the horizontal crossing following the green curves) 
and $f$ (for the vertical crossing following the red curves). 
See Remark \ref{r1a4} and Section \ref{S11} 
below concerning the more general case of two independent outputs $g$ and $f$.

\medskip
It is very tempting to use our smallness assumption \eqref{1a4} to proceed by linearization.
That is, we would look for vector fields $e(x, t) = e_1 + v$, $v$ small, linearize the outputs $O_1$ 
and $O_2$, find an inverse for them, and then try to iterate or apply a fixed point theorem. 

We did not manage this directly, and see two difficulties with this approach. 
The first one is that the range of the linearization is a strict subspace, i.e.,
is composed of functions $f = Id + \psi$ for which $\int_I  \psi(x) = 0$. 
This does not mean that the other functions $f$ cannot be attained 
(and we will see some examples in Section \ref{S5}), 
but we claim that they cannot be attained by vector fields of order $1$.

The second difficulty is that our constructions to fix this problem and iterate anyway seem all to 
eat one derivative of $v$ (each time we iterate!), which is not good. 
This appears when we want to control the errors that we make when we approximate
$O_1$ and $O_2$ by the linear parts; a similar difficulty will appear in our proof below, 
late in Section \ref{S8}, but then, luckily, we will differentiate a function which is a wavelet, whose derivative has roughly 
the same size as the function itself.
\end{rem}

\begin{rem} [\bf About the size of the solution]  \label{r1a3}
The curves constructed in Theorem \ref{t1} have $C^{m}$ norms less than $C N_k(\psi)^{1/2}$, 
or equivalently the $C^{m}$ norm of $e(x,t)$ is less than $C N_k(\psi)^{1/2}$. This is not as surprising as it may seem: first, a direct linearization does not work, and the remark above says that a linear growth is most probably false. Second, at some point, in order to modify our targets a little, we will need to modify the curves much more. See some explanations at the end of Section \ref{S7}.
\end{rem}

\begin{rem} [\bf About the lost derivatives]  \label{r1z5}
It is more intriguing that we loose about half of the derivatives of $f$ (we initially expected to lose only $1$). 
This may be for purely technical reasons. In the proof, we compose infinitely many mappings 
(at smaller and smaller scales), and, because we need to compose with maps that are larger than the current errors
(as in Remark \ref{r1a3}), the corresponding curves that we construct are more and more distorted.
Again, see the end of Section \ref{S7} for the actual estimates. 
Naturally, there is a relation between the norms of the functions we compose and the regularity of the final mapping, 
which is why we pay so much in the end.

We were kindly asked by C. De Lellis whether there is a $C^\infty$ version of Theorem \ref{t1}.
If we don't want to use special classes of functions, this probably means finding an algorithm that would
yield $C^\infty$ solutions given small $C^\infty$ data $f$, but we do not immediately see how
smallness will be measured then, unless we have a different condition for each derivative. 
Certainly we would have to replace our compactly supported wavelets
by rapidly decreasing ones (or Calder\'on decompositions), and the cost would be for our approximation algorithm below to get more complicated. Probably we could choose $\tau$ in Section \ref{S9} independent of $k$, but we did not try to pursue this direction here.
\end{rem}

\begin{rem} [\bf A question about double control]  \label{r1a4}
We may want to replace \eqref{1a1} by the more general requirement that
\begin{equation}\label{1a8}
\text{ For $t\in J$, the green curve $G_t$ that starts at $(a, t)$ ends at $(b, g(t))$},
\end{equation}
where $g : J \to J$ is another increasing diffeomorphism. Thus Theorem \ref{t1} corresponds to $g(t) = t$. We do not know 
whether the analogue of Theorem \ref{t1} is still true in this context. Our proof does not extend, but if we give ourselves 
some room around $R$ for the curves to organize themselves (instead of forcing them to be the straight lines), we can use 
Theorem~\ref{t1} to get the result. See Section \ref{S11}. 
\end{rem}

\subsection{Motivation}
\label{S1S3}

The problem described in Subsection \ref{S1S1} seems interesting to us on its own, though we did not find anything similar in the literature. As explained in Remark \ref{r1a2}, it can be interpreted as a control problem. It also can be interpreted as a complex analysis problem: one tries to find a quasiconformal mapping (from a disk to itself or from a half-plane to itself) with prescribed boundary data and some condition on the coefficient of the mapping. 
Here we think about the bijective mapping on $R$ that sends a pair $(x,t)$ to the intersection of $G_t$ and $R_x$, 
and the main condition is the orthogonality \eqref{1a7} of the level sets of co-ordinates.

Our original motivation was different, and stems from the study of harmonic/elliptic measure on rough domains.
We give a very rapid overview here, and refer the reader to the companion paper \cite{DP} 
where we apply Theorem \ref{t1}.

Consider an elliptic operator $L = - \rm{div} A \nabla$ on a domain $\Omega \subset \R^n$, where in general
$A$ is a matrix-valued bounded measurable function satisfying some uniform ellipticity condition. 
The special case of $A \equiv Id$ and $L = -\Delta$ (the laplacian) is already very interesting. 
There has been a lot of efforts to understand the elliptic measure associated to $L$ in $\Omega$
(this is the measure on $\d\Omega$ that allows one to compute solutions of the Dirichlet problem $Lu=0$ with
$u_{\vert d\Omega}$ given). See introductions of \cite{ABHM}, \cite{AHMMT} and \cite{AHM3TV}, or the whole book 
\cite{PT} for more information about this, and in particular the question of whether the elliptic measure is absolutely
continuous with respect to the natural measure on $\d\Omega$ (often, the surface measure $\H^{n-1}_{\vert \d\Omega}$). This question becomes very important if one wants to determine whether the Dirichlet problem for the aforementioned operator $- \rm{div} A \nabla$ with rough boundary data ($L^p$) is solvable in this domain. 

Typical positive results would give that for one-sided NTA domains in $\R^n$, with $(n-1)$-dimensional
Ahlfors regular boundaries, and $L = - \Delta$, or more generally when $A$ is well approximated locally 
by constant coefficient operators (the so-called Dahlberg-Kenig-Pipher (DKP) condition), elliptic measure and surface 
measure are mutually absolutely continuous, with uniform estimates, as soon as $\d\Omega$ is uniformly rectifiable.
In the aforementioned papers, general enough converse results were obtained as well, to the effect that
(for DKP coefficients), rectifiability is needed. But it still makes sense to ask when, depending on the pair
$(\Omega, A)$, we have absolute continuity of one measure with respect to the other, 
knowing from the recent \cite{FP} that the answer stays the same for DKP perturbations of $A$.

The paper \cite{DP} follows two recent papers \cite{DM, P}, where it was shown that elliptic measure can be equivalent
to the natural (Hausdorff) measure on $\d\Omega$ for some fractal boundaries (thus very far from rectifiability), but with 
specifically chosen coefficients $A$ (which are therefore very far from a DKP perturbation of the identity coefficient). 
For all these counterexamples, 
the matrix $A$ is scalar, i.e., $A = a I$ for some $a: \Omega \to [C^{-1}, C] \subset \R$, which is natural 
for reasons explained in these papers. These examples are obtained by first choosing the Green function
(carefully), and then showing that it satisfies an elliptic equation. Incidentally, in the context of \cite{P} and \cite{DP}, 
producing examples with general matrix-valued $A$ is too easy, because we could use quasiconformal mappings.

In \cite{DM}, the main studied domain is the complement in the plane of the 
Garnett-Ivanov Cantor set of dimension $1$, which is probably the most celebrated Ahlfors regular example for which
the harmonic measure (for $L = -\Delta$) is singular. In \cite{P}, the examples were the complementary
components of Koch-type snowflakes in the plane. In both cases, the precise fractal structure of the boundaries of these domains helped a lot to construct a Green function $G$, designed with the help of an appropriate rigid tiling of $\Omega$, 
with invariance properties that allowed to prove the ellipticity of $a$.

In \cite{DP}, $\Omega$ is one of the two components of $\R^2 \sm \Gamma$, where $\Gamma$ is any Reifenberg flat
curve in the plane, with small enough constant (see for instance \cite{DT} for a definition). We still define $G$
first, and check that if we do this well, $G$ solves an elliptic equation (with $L = - \rm{div} \,  a \,  Id \nabla$). 
We even manage to choose $G$ and $a$ so that the elliptic measure is equivalent to any given
measure $\sigma$ on $\d\Omega$, provided that it is doubling with a sufficiently good constant.
This last also seems to be interesting even in the special case when $\Omega$ is a half plane.

The main difficulty is that, with general Reifenberg flat curves, precisely self-similar tilings are now out of the question. We can still draw green curves 
in $\Omega$ that look like what we expect from level sets of a green function $G$, but then the orthogonal curves
(call them red) will not have the correct long-distance behavior (they should roughly tend to stay at comparable distances
from each other) that allows $G$ to solve an elliptic equation (that is, once we compute coefficients $a$ from $G$, to give
an elliptic function $a$).

This is where Theorem \ref{t1} comes into play, as a way to modify slightly our green curves inside rectangles
$R \subset \Omega$, so as to control locally the distances between the various orthogonal red curves, and prevent them
from going crazy at large distances. Again, we expect that similar constructions could be useful in other contexts.

~\ 

\noindent The rest of this article is organized as follows. 
In Section \ref{S2}, we give a summary of the proof of Theorem \ref{t1}, see Subsection \ref{S2S2}. 
The definitions and statements 
in Subsection \ref{S2S1} seem however essential to understand it. Sections \ref{S3}-\ref{S10} are devoted to the proof itself. More guidelines for their contents can be found in the aforementioned summary in Subsection \ref{S2S2}. 
In Section \ref{S11} we solve a particular case of the double control problem presented in Remark~\ref{r1a4}. 
The last Section \ref{S12} answers Question \ref{q1a8} 
and provides an alternative proof for Theorem \ref{t1}.

~\ 

\noindent {\bfseries LLM disclosure:} all the main results 
of this paper are contained in Sections \ref{S1}-\ref{S11}, and, apart from Section \ref{S12}, all mathematical arguments and non-standard definitions in this manuscript are created by the authors. Section \ref{S12} contains an argument produced in its bulk by OpenAI's Chat GPT, but reprocessed and verified independently by the authors, who take full responsibility for the content of the manuscript. This text was also hand-typed and edited without any AI assistance.
 
\section{First steps towards designing the curves and a summary of the construction}
\label{S2}

\subsection{Antisymmetric elementary targets, the classes $\Psi_0(I)$, $\Psi(I)$, and a 
question about finite decompositions} 
\label{S2S1}

As mentioned above, one can interpret our problem as a search for a smooth vector field $n(x, t)$ 
such that the integral red curves $R_x$ of this field, starting at $(x, c)$, arrive at $(f(x), d)$,
while the integral curves $G_t$ of the orthogonal field $e(x,t)$ cross $R$ as in \eqref{1a1}.
This is why we sometimes call the function $f$ a target function, which we try to reach by constructing $n(x, t)$.  

The next lemma says that if we can reach a target $f$ and a target $g$, then, by a simple geometrical construction, 
we can also reach $f \circ g$ and $f^{-1}$ (the inverse mapping). In very broad strokes, to obtain Theorem \ref{t1}, we use this fact, plus simple constructions for specific targets $f$, to get as large a class of reachable targets as possible, in which we approximate our initial target $f$ with a small error. Then, we iterate at smaller scales. 

\begin{lem}  \label{l1a4}
Suppose $f$ and $g$ are two increasing bijections of $I$ for which we can construct the desired net of curves
(the conclusion of Theorem \ref{t1} holds for them), with intervals $J_f$ and $J_g$. Then we can also prove Theorem \ref{t1}
for $f \circ g$, with an interval $J$ of length $|J_f|+|J_g|$, and for $f^{-1}$, with the same interval $J_f$.
\end{lem}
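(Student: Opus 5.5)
For the composition, I will stack the two nets vertically. Write $J_g=[c_g,d_g]$ and $J_f=[c_f,d_f]$. Translate the net for $g$ vertically so that it lives on $R_1=I\times[c,c+|J_g|]$. Translate the net for $f$ so that it lives on $R_2=I\times[c+|J_g|,\,c+|J_g|+|J_f|]$. Then $J=[c,c+|J_g|+|J_f|]$, and translations preserve orthogonality, the Lipschitz bounds and the $C^m$ norms.

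The green curves of the union are the green curves of each piece. They foliate $R=R_1\cup R_2$, each is a graph $g_t$ that equals $t$ off $I$, and by \eqref{1a1} for each piece every green curve ends at the height where it starts. The common edge $I\times\{c+|J_g|\}$ is a green curve for both nets by \eqref{1a3}.

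For the red curves, the curve $R_x$ starts at $(x,c)$ and follows the $g$-net up to $(g(x),c+|J_g|)$. It then continues along the red curve of the $f$-net that starts at $g(x)$, and so reaches $(f(g(x)),d)$. This gives \eqref{1a2} for $f\circ g$. Concretely, $r_x(t)=r^g_x(t)$ for $t\le c+|J_g|$ and $r_x(t)=r^f_{g(x)}(t)$ above. Each point of $R$ still lies on exactly one green and one red curve, because $g$ is a bijection of $I$. Orthogonality holds inside each piece, and \eqref{1a3} is clear.

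The main obstacle is regularity across the gluing line $t=c+|J_g|$. Off $I$ everything is trivial. The point is that in the paper's normalization a red curve satisfies $r_x(t)=x$ for $t\le c$ and $r_x(t)=f(x)$ for $t\ge d$. So each red curve is vertical, with all derivatives of $r_x-\mathrm{const}$ vanishing, near (or at) the top and bottom edges, and likewise the green curves near the bottom and top edges are horizontal lines. Both pieces are therefore constant in $t$ to all orders at the interface, and the glued functions are $C^m$ in $(x,t)$. For the $t$-derivatives of $r_x$ this follows from the one-sided flatness. For the $x$-dependence, $x\mapsto r^f_{g(x)}(t)$ is $C^m$ as a composition with the $C^k$ map $g$.

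If the individual nets are only $C^m$ up to the boundary rather than flat there, I would first reparametrize each net in $t$ near its top and bottom edges so that the curves become exactly horizontal/vertical in a collar. Alternatively, I would check that the nets constructed in the paper have this property. The $C^m$ bounds for the glued net are at most the sum or product of the bounds for the two pieces.

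For the inverse, I reflect the net for $f$ through the horizontal line $t=(c_f+d_f)/2$, that is, by $(X,T)\mapsto (X,c_f+d_f-T)$. Reflections preserve orthogonality. Green curves go to green curves; the reflected family is $\tilde g_t(X)=c_f+d_f-g_{c_f+d_f-t}(X)$, which still satisfies \eqref{1a1}. The red curve from $(x,c_f)$ to $(f(x),d_f)$ becomes a curve from $(f(x),c_f)$ to $(x,d_f)$. Relabeling it as $\tilde R_y$ with $y=f(x)$ gives a red curve from $(y,c_f)$ to $(f^{-1}(y),d_f)$. Explicitly, $\tilde r_y(t)=r_{f^{-1}(y)}(c_f+d_f-t)$, which is $C^m$ in $(y,t)$ because $f^{-1}$ is $C^k$. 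The boundary conditions \eqref{1a3} are preserved by the reflection, and the interval $J_f$ is unchanged.
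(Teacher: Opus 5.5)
Your proposal is correct and is the paper's argument: stack the $g$-net below the $f$-net for $f\circ g$, and reflect the $f$-net in the line $t=(c_f+d_f)/2$ for $f^{-1}$; your interface discussion adds details that the paper only asserts. One point to tighten is the green family: its flatness at the top and bottom edges does not follow from the red-curve normalization alone. It holds if the extension $g_t(x)=t$, $t\notin J$, is required to be $C^m$, or because the nets of Lemma \ref{l1a5} are exactly straight in collars, $\varphi$ being constant near the ends of $J$. Since reparametrizing in $t$ cannot straighten the curves, it is your second fallback, not the first, that closes this point.
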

\begin{proof}
We just put $J_f$ on top of $J_g$ vertically, and build in each rectangle $I \times J_f$ and $I \times J_g$ the vector field coming from the assumption; the resulting vector field is still $C^m$, and a red curve that starts from $(x, c_g)$ and follows the vector field $n$ will go through $(g(x), d_g)$ and then will continue to $(f \circ g(x), d)$ as needed, with $d = c_g + (d_g - c_g) + (d_f - c_f) = d_g + (d_f - c_f)$.
See Figure \ref{fig2}. 

Similarly, if we take the solution for $f$ and then apply a symmetry around the horizontal line
$t = (c_f + d_f)/2$, we get a solution for $f^{-1}$.    
\end{proof}
\begin{figure}[!h]
  \centering 
 \includegraphics[width=6cm]{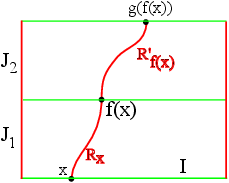} 
  \caption{How to solve for the composition $g \circ f$}
\label{fig2}
\end{figure}

Lemma \ref{l1a4} supplies us with some operations which allow to reach new targets, 
provided that we know already how to reach $f$ and $g$. 
It is time to describe our basic class of easy targets $f$.
Recall that $f$ satisfies \eqref{1a4}, and that we systematically write $f = Id + \psi$, where $Id: \R \to \R$ is the identity and $\psi$ is supported in $I$. 
A simple way to make sure that \eqref{1a1} holds is to make our green curves symmetric
with respect to the vertical line $L = \big\{ x = (a+b)/2  \big\}$. This justifies the following. 

\begin{defi}\label{defi1a9}
For any interval $I = [a,b] \subset \R$, we denote by $\Psi_0(I)$ the set of functions $\psi : \R \to \R$ 
that are at least $C^2$, supported in $I$, and antisymmetric in the sense that
\begin{equation}\label{1a9}
\psi(a+b-x) = - \psi(x) \ \text{ for } a \leq x \leq b.
\end{equation}
That is, the graph of $\psi$ is symmetric with respect to $(x_0, 0)$, with $x_0 = (a+b)/2$. 
Or, $\psi(\frac{a+b}2 +\cdot)$ is odd.
\Bk
We also require that $||\psi'||_{L^\infty} < 1/2$, which among other things is a sure way to force
$f = Id + \psi$ to be an increasing diffeomorphism of $I$.
\end{defi}

The next lemma is not very complex, but it is crucial for what follows. The most important part is very graphical and is presented in Figure \ref{fig3}. However, the estimates on the graphs of functions $r_x$ and $g_t$, which represent the curves, are not pleasant to do. Therefore we suggest, if we may, that the reader looks at the part around Figure \ref{fig3} (the first page or so of the proof), trusts us with the estimates on the derivatives of $r_x$ and $g_t$, and skips the rest of the proof if they find themselves annoyed by the estimates. All this so that they may reach the summary of the whole construction relaid in the beginning of Subsection \ref{S2S2} faster.  

\begin{lem}  \label{l1a5}
Let the intervals $I$ and $J$ be given. 
The conclusion of Theorem \ref{t1} holds when $\psi \in \Psi_0(I')$ for any $I' \subset I$ under the assumption that $N_0(\psi) = ||\psi||_\infty \leq 20^{-1} |J|$, and $N_1(\psi) \leq 1/2$ as above. 
 
Furthermore, the green and red curves are $C^k$ as soon as $\psi \in C^k$, and we have the following estimates for the functions $g_t$ (whose graph is $G_t$) and $r_x$ (whose graph is $G_t$):
\begin{equation} \label{2bb2}
||\nabla_{x,t}^j \, g_t(x)||_\infty + ||\nabla_{x,t}^j \, r_x(x)||_\infty \leq C \Big(1+ \frac{|I|}{|J|}\Big)^{j+1}
\sum_{i \leq j} N_i(\psi) 
\, \text{ for } 1 \leq j \leq k.
\end{equation}
\end{lem}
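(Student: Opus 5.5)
The plan is to build the net explicitly from a single family of red curves designed to move $x$ to $f(x)=x+\psi(x)$, and then to check that the orthogonal green family closes up by symmetry. Write $I'=[a',b']$ with midpoint $x_0$, and $J=[c,d]$. I would take a smooth cutoff $\theta:\R\to[0,1]$ with $\theta=0$ near $(-\infty,c]$, $\theta=1$ near $[d,\infty)$, and $\theta'\le C/|J|$. As a first attempt the red curves are
\[
r_x(t)=x+\theta(t)\,\psi(x),
\]
which gives \eqref{1a2}, and \eqref{1a3} on the vertical sides, directly. The difficulty is that the green curves are then forced to be the orthogonal trajectories, and there is no reason they should start and end at the same height. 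To force this, I would exploit antisymmetry. By \eqref{1a9}, the map $x\mapsto a'+b'-x$ sends $r_x$ to $r_{a'+b'-x}$ when $\theta$ is replaced by ``$-\theta$''. This suggests making the construction symmetric under the reflection $\sigma(X,T)=(a'+b'-X,T)$ composed with a suitable transformation of the heights. The cleanest version is the following.

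1. \textbf{Mirror-symmetric green curves.} Prescribe the green curves to be mirror symmetric about the line $X=x_0$. Then \eqref{1a1} holds automatically, since the curve starting at $(a,t)$ returns by symmetry to height $t$ at $b$.

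2. \textbf{Red curves as a deformation.} With green curves symmetric, the orthogonal red field is also $\sigma$-symmetric. Its time-$J$ flow map $F$ therefore satisfies $F(a'+b'-x)=a'+b'-F(x)$, so $F-Id$ is antisymmetric. This matches $\Psi_0(I')$ exactly, and is the reason the class is introduced.

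3. \textbf{The ansatz.} I would take green curves of the form $g_t(X)=t+\lambda(t)\,h(X)$, where $h$ is even about $x_0$ and supported in $I'$, and $\lambda$ is a bump supported in the interior of $J$. The red curves then solve $dX/dT=-\partial_X g$ along the green level sets. To first order the displacement is $-\Lambda\,h'(x)$ with $\Lambda=\int\lambda'$-type weights, and $h'$ is odd. This is not exact, so I would instead build the red curves first and derive the green ones: keep $r_x=x+\theta(t)\psi(x)$, but choose $\theta$ to ramp up and then have the red curves reflect. Concretely, I would split $J$ into two halves and use a time profile $\theta$ symmetric about the midpoint of $J$.

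The most robust route I see is to choose the red foliation to be $\sigma$-equivariant, with displacement $\theta(t)\psi(x)$. The green curves are then the integral curves of the orthogonal field $e$, and the goal is to show they are $\sigma$-symmetric. Since $\sigma$ maps the red foliation to itself (because $\psi$ is antisymmetric), it maps $n$ to its reflection. Hence $e$ is mapped to $-\sigma_*e$, so $\sigma$ maps the green foliation to itself. A green curve through $(x_0,T)$ is therefore symmetric, and it meets the sides at equal heights. For \eqref{1a1} we need more, namely the height at $a$ equal to the height at $b$; this holds because green curves starting at $(a,t)$ pass through the axis and reflect, and $\sigma$ fixes the vertical sides' heights. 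The remaining points to check are:
\begin{itemize}
\item each green curve crosses $R$ as a graph, which needs slopes $\le 10^{-1}$;
\item the green curves do not leave through the top or bottom;
\item $G_c$ and $G_d$ are straight, which holds since $\theta'=0$ near $c,d$ gives vertical red curves there.
\end{itemize}

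The main obstacle is the quantitative part. The slope of $e$ is $-\partial_t r_x=-\theta'(t)\psi(x)$, of size $N_0(\psi)/|J|\le 1/20$, and this is exactly where the hypothesis $N_0\le |J|/20$ enters. With it, green curves deviate vertically by at most $|I|/20$ in height, which could still exit $R$. I expect to handle this by taking $\theta'$ supported in the middle third of $J$, so that green curves starting near $c$ or $d$ stay horizontal. For the bounds \eqref{2bb2}, the map $(x,t)\mapsto(r_x(t),t)$ is a $C^k$ diffeomorphism with Jacobian close to $1$ (using $N_1\le1/2$). The green curves solve $dT/dX=-\theta'(T)\psi(x(X,T))$, whose right-hand side has $j$-th derivatives bounded by $|J|^{-j-1}\sum_{i\le j}N_i$. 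Gronwall over horizontal length $|I|$ gives the factors $(1+|I|/|J|)^{j+1}$. Finally, the dependence on $t$ is inverted by the implicit function theorem.
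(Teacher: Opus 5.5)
Your final route is the paper's: red curves $r_x(t)=x+\theta(t)\psi(x)$, and green curves as integral curves of the orthogonal field. The symmetry of each green leaf comes from the $\sigma$-equivariance forced by the antisymmetry of $\psi$, together with uniqueness of integral curves through points of the axis. Since $\sigma$ reflects about the midpoint of $I'$, the link with the sides $X=a$, $X=b$ is not that $\sigma$ fixes them. It is that red curves are vertical outside $I'\times J$, so green curves are horizontal there; the paper simply reduces to $I'=I$. Two steps of your plan do not work as written, though.

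\textbf{The middle-third fix.} Putting the support of $\theta'$ in the middle third of $J$ forces $\|\theta'\|_\infty\ge 3/|J|$. Slopes can then reach $3N_0(\psi)/|J|$, which under the hypothesis $N_0(\psi)\le|J|/20$ can be $3/20$, breaking the required $10^{-1}$-Lipschitz bound. It is also unnecessary. Because $\theta'=0$ near $c$ and $d$, the field $e$ is horizontal in thin strips along the bottom and top of $R$. So $G_c$, $G_d$ and nearby horizontal segments are leaves, and by uniqueness for the Lipschitz field $e$ no other green curve can cross them. Since $e$ is proportional to $(1,-\theta'\psi)$, every green curve started on the left side is parametrized by $X$, stays in $R$, and reaches the right side.

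\textbf{The Gronwall step.} Gronwall does not give the factor $(1+|I|/|J|)^{j+1}$. The $T$-derivative of the slope $-\theta'(T)\psi(x(X,T))$ is of order $N_0(\psi)/|J|^2$. Integrating the variational equation over horizontal length $|I|$ therefore produces factors like $\exp(C|I|N_0(\psi)/|J|^2)$, which under the lemma's hypotheses can be as large as $e^{C|I|/|J|}$. That is harmless when $N_0(\psi)\lesssim |J|^2/|I|$, but it is not \eqref{2bb2}. The paper organizes this part differently. It bounds the derivatives of $H^{-1}$ through the Bell-polynomial identities \eqref{2b9}--\eqref{2b13}. It applies the $C^k$ Cauchy--Lipschitz theorem only at normalized size ($|I|\le 2$), and accounts for $|I|$ by rescaling.
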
 

When $|J| < |I|$, our estimate is clearly not optimal, and probably the correct power is $k$; see Remark \ref{r2a4}. 
For a more precise estimate on the $r_x$ alone (and all $I$, $J$), see \eqref{2c4}.

\begin{proof}
It is enough to prove this when $I'$ is symmetric with respect to the origin, 
and even $I = I'$ (we will not move anything in $\R \sm I'$).
In addition, in order to simplify our proofs, let us assume that $I= [-1, 1]$ and, for the moment,
any $J = [c,d]$. In this case $\psi$ is just odd (and our formulas will be simpler).

We first construct and study the red curves $R_x$, $x \in I$. 
Because of \eqref{1a2}, we want
$R_x$ to start at $(x, c)$ and end at $(f(x), d) = (x+\psi(x), d)$.
Let us decide that $R_x$ will be the graph (over the $t$-axis) 
of a function of $t \in J$, that is, $F_x: [c,d] \to \R$, chosen so that $F_x(c) = x$ and $F_x(d) = x + \psi(x)$. 
For instance we choose a nice function $\varphi$ such that $\varphi(t) = 0$ near $0$, 
$0 \leq \varphi \leq 1$ everywhere, and $\varphi(t) = 1$ near $1$, and then set 
\begin{equation}\label{1a10}
F_x(t) = x + \psi(x) \varphi(t)  \ \text{ for } t \in J.
\end{equation}
See Figure \ref{fig3}. 
We also extend $F_x$ to $\R \sm J$ by taking $F_x(t) = x$ for $t < c$ and $F_x(t) = x+ \psi(x)$ for $t > d$.
Notice that $F_x(t) = x$ for $x = \pm 1$; we also set $F_x(t) = x$ for $x \in \R \sm I$. 
Notice that then we have \eqref{1a3}.

\begin{figure}[!h]
  \centering 
 \includegraphics[width=9cm]{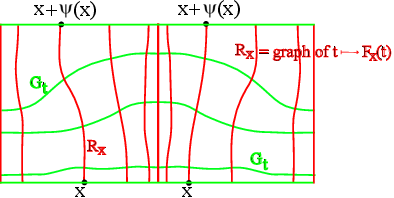} 
  \caption{Antisymmetric data $\psi$}
\label{fig3}
\end{figure}

Obviously $(x,t) \mapsto F_x(t)$ is at least as regular as $\psi$, so we will not lose regularity here.
At this point most of the proof is in the picture, but let us still formally check all the properties from pages 3 and 4, and write down some estimates on derivatives (will be needed later). 

\ms
First, let us verify that the red curves form a foliation of $I \times J$. To do that, consider the mapping $H$ defined by $H(x,t) = (F_x(t),t)$, and check that it is a 
diffeomorphism of $I \times J$. $H$ is a bijection, since for $t \in J$ given,
$x \mapsto F_x(t) = x + \psi(x) \varphi(t)$ is differentiable, with a positive 
derivative, and preserves the endpoints of $I = [-1, 1]$. Let us also record for later that its Jacobian matrix at $(x,t)$ is 
\begin{equation} \label{2c3}
D(x,t) = \begin{bmatrix} 1+\psi'(x) \varphi(t) & \psi(x) \varphi'(t)  \\
      0 &1 
     \end{bmatrix} .
\end{equation}
Our foliation condition for the $R_x$ follows, because for $(y,t) \in I\times J$,
saying that $(y,t) \in R_x$ amounts to saying that $y = F_x(t)$ or that 
$(y,t) = H(x, t) = (F_x(t),t)$, and the unique solution for $x(y)$ exists.

We made sure $R_x$ starts from $(x,0)$ and ends at $(x + \psi(x), 1)$, as needed for 
\eqref{1a2}, and the only thing that we did not check yet is that 
$R_x$ is a $10^{-1}$-Lipschitz graph. 
In the definitions concerning the $R_x$, 
we also required that $R_x$ is the graph of a function $r_x$, and so here $r_x = F_x$, 
and it is Lipschitz with constant $||\varphi'||_\infty ||\psi||_\infty$, and we now deduce the desired Lipschitz bound 
from our requirement that $||\psi||_\infty \leq \frac{1}{20} |J|$ (we took $\frac{1}{20}$ just to be safe). 
The norms for the $r_x$ are also easy to obtain: if we choose $\varphi$ correctly, with 
$|\varphi^{(i)}| \leq C_i |J|^{-i}$, we immediately have \begin{equation} \label{2c4}
|\nabla_t^i \nabla_x^j r_x|
=  |\nabla_t^i \nabla_x^j F_x| \leq C_{i,j} |J|^{-i} N_j(\psi) 
\end{equation}
for $i > 0$ and $0 \leq j \leq k$, while for $i=0$ we get $|\frac{\d r_x}{\d x} - 1| \leq C N_1(\psi)$
and $|\nabla_x^j r_x| \leq C N_j(\psi)$ for $2 \leq j \leq k$.

\medskip
For the green curves we naturally need to work more, 
because they are given in a less explicit manner. 
We first compute tangents and normals. Set 
\begin{equation} \label{2c5}
n_0(x,t) = (\psi(x) \varphi'(t), 1);
\end{equation}
this is a tangent vector to $R_x$ at the point $H(x,t) = (F_x(t),t)$.
Said in other words, 
\begin{equation}\label{2c6}
n(x,t) = n_0(H^{-1}(x,t)).
\end{equation} 
is a tangent vector for the $R$-curve that goes through $(x,t)$.

Let us assume that $\psi$ is of class $C^k$ (for some $k$, which may be smaller than the one in the theorem).
Obviously, $n_0(x,t)$ is also $C^k$, as well as $n(x,t)$ because $H$ is a diffeomorphism of class $C^k$.
Amusingly, we did not lose a derivative here because we only differentiated $\varphi$ to get $n_0$ and $n$.
Call 
\begin{equation}\label{2c7} 
e_0(x,t) = (1, - \psi(x) \varphi'(t))
\ \text { and } e(x,t) = e_0(H^{-1}(x,t))
\end{equation}
two vectors that we choose to be orthogonal to $v(x,t)$ and $n(x,t)$.
Because of \eqref{1a7} we want 
$e(x,t)$ to give the direction of the tangent to the $G_s$ that passes through $(x,t)$.  
Fortunately, even when $k=1$, $e$ is sufficiently continuous/Lipschitz continuous for us to apply the Cauchy-Lipschitz theorem
and find curves $G_t$, $t \in J$, that foliate $R$. In addition, 
since $n(x,t)$ is almost vertical, $e(x,t)$ is almost horizontal, and $G_t$ is automatically a $10^{-1}$-Lipschitz
graph. Call $g_t : \R \to \R$ the function whose graph is $G_t$; we know that $(x,t) \mapsto g_t(x)$
is $C^k$ (the estimate below will say even more). But we also want to estimate its derivatives, which will be the unpleasant part. 

\medskip

We need more notation. It will be convenient to write 
\begin{equation} \label{2c8}
H(x,t) = (h_t(x), t), \ \text{ with } h_t(x) = F_x(t) = x + \psi(x) \varphi(t)
\end{equation}
and $H^{-1}(y,t) = (f_t(y), t)$, where the equation $H \circ H^{-1}(y,t) = (y,t)$ becomes
$h_t(f_t(y)) = y$. In other words, $f_t$ is the inverse of $h_t$ for the composition. 
Let us first fix $t$, remove it from or notation, and estimate the derivatives of $f_t$.
We start from $f(h(x)) = x$, and take successive derivatives; we claim that for a composition in general,
\begin{equation}\label{2b9}
(f \circ h)^{(k)}(x) = \sum_{j = 0}^k f^{(j)}(h(x)) \, B_{k,j} \big(h'(x), \ldots, h^{(k-j+1)}(x)\big)
\end{equation}
for some polynomial $B_{k,j}$ of degree at most $k$ that we don't need to compute (these are called Bell's polynomial).
This is easy to check by induction.
We apply this with $k \geq 1$ when $f \circ h(x) = x$. 
Notice that, for $j = k$, there is only one term in the polynomial $B_{k, j}$ of \eqref{2b9}, and the corresponding 
term in the sum is $f^{(k)}(h(x)) h'(x)^k$. This allows us to 
compute it from $(f \circ h)^{(k)}(x)$
(here equal to either $0$ or $1$) and the previous $f^{(j)}(h(x)) h'(x)^j$. We iterate and get that 
\begin{equation}\label{2b10}
f^{(k)}(h(x)) \,  h'(x)^k = P(h(x), h'(x) \ldots, h^{k}(x))
\end{equation}
for some polynomial $P$. Since $|h'(x) -1| = |\psi'(x)| \varphi(t) \leq 1/2$ 
because we assumed that $N_1(\psi) < 1/2$, we get that 
$|f^{(k)}(h(x))| \leq 2^k |P(h(x), h'(x) \ldots, h^{k}(x))|$. 
Set $\ol N_k(\psi) = \sum_{j=0}^k N_j(\psi)$; if $\ol N_k(\psi)$ is small enough
(this will be our main case), we obtain that
\begin{equation}\label{2b11}
|f^{(k)}(h(x))| \leq C \ol N_k(\psi) \, \text{ for } k \geq 2.
\end{equation}
Finally, for $k=1$,
we already observed that $|h'(x) - 1| \leq |\psi'(x)| \leq 1/2$, so
$f'(h(x)) = 1/h'(x) \sim 1$.

Now return to the more explicit notation with $f(x) = f_t(x) = f(x,t)$ and $h(x) = h_t(x) = h(x,t)$, with $t \in J$. 
We want to take partial derivatives, in $t$ as well, of $H^{-1}(x,t)= (f(x,t), t)$. In particular, \eqref{2b10} writes as 
\begin{equation}\label{2b12}
[\d_x^k f](h(x,t),t) \,  [\d_x h(x,t)]^k = P(h(x,t), \d_x h(x,t) \ldots, \d_x^k h(x,t)),
\end{equation}
and we have similar formulas for $[\d_x^j f](h(x,t),t)$, $j < k$.
When $j < k$ and we differentiate $\d_x^j f(h(x,t),t)$ with respect to $t$, we get 
$$
\frac{\d}{\d t}[\d_x^j f(h(x,t),t)] = [\d_t\d_x^j f](h(x,t),t) \cdot \d_t h(x,t) + [\d_x^{j+1} f](h(x,t),t) \cdot \d_t h(x,t).
$$

On a slightly informal level, the first term on the right-hand side is equivalent to $[\d_t\d_x^j f]$ $(h(x,t),t)$ because $\d_t h(x,t)$ is equivalent to $|J|^{-1}$. It is thus computable in terms 
of $h(x,t),$ $\d_x h(x,t)$ $ \ldots, \d_x^{j+1} h(x,t)$, and the $t$-derivative of the right-hand side of \eqref{2b12}. 
We can iterate this computation, i.e., take more $t$-derivatives and compute the main term in terms
of the other ones (that we already know how to compute) as long as we do not reach derivatives of order $k+1$. 
This way we find formulas for all the partial derivatives $[\d_t^\ell \d_x^j f(h(x,t),t)]$, as long as $\ell + j \leq k$.
We get expressions in terms of $h$ and its derivatives, which are polynomial, but 
only after we divide by $h_t'(x)= \d_x h(x,t)$ to some power. So we claim that
\begin{equation} \label{2b13}
|\d_t^\ell \d_x^j f| \leq C_{i, j} |J|^{-l} \ol N_k(\psi)  \ \text{ when $j \geq 1$ and $\ell \geq 0$ are such that } \ell + j \leq k.
\end{equation}
We need to be careful, because when we write down $\d_t^\ell \d_x^j f$ in terms 
of $h$ and its derivatives, some terms may be large. For instance, we have seen below \eqref{2b11} that 
$|f_t'(x)-1| \leq C N_1(\psi)$ (and hence $|f_t(x)-x| \leq C N_1(\psi)$ as well), and $1$ and $x$ are too large
for \eqref{2b13}. Yet all the terms that contain a $t$-derivative of $h$, or at least two $x$-derivatives
of $h$ (or more generally that contain $\psi$ or $\varphi$ in their expression) comply with \eqref{2b13}.
The other terms must be present also when $\varphi = 0$, where $H^{-1} = H = I$, and these terms vanish when
$j \geq 2$ and $\ell > 0$. So \eqref{2b13} holds, and since $f$ is the main part of $H^{-1}$, we get the same estimates for
$H^{-1}$. 

This implies that the vector field defined by \eqref{2c7} is $C^k$ (as we said earlier), and 
\begin{equation} \label{2b14}
|\nabla^l_t\nabla_x^j e_0| \leq C |J|^{-l} \ol N_j(\psi) \text{ for } 1 \leq j + l \leq k.
\end{equation}
Let us return to $G_t$, the graph of $g_t$. We already said that $\xi_t(x) = (x,g_t(x))$ is the 
solution of the flow problem $\frac{\d \xi}{\d x} = e_0(\xi(x))$, with the initial value condition
$\xi_t(0) = t \in \R$. The strong form of the Cauchy-Lipschitz theorem says that 
$(x,t) \mapsto \xi_t(x)$ is of class $C^k$, with estimates that depend only on \eqref{2b14}.
On a large set, the estimates would get exponentially bad, but here the lengths of $I$ and $J$ are $\leq 2$, 
and we get the desired estimate 
\begin{equation} \label{2b15}
|\nabla^l_t\nabla_{x}^j \, g_t(x)| \leq C |J|^{-l} \ol N_j(\psi) \text{ for } 1 \leq j \leq k.
\end{equation}

\ms
We finally observe that if $|I|$ is larger, then all our estimates above transform into something which contributes another factor $|I|^{j + 1}$ in \eqref{2bb2}. One can do this by considering $\ol \psi(x) = \lambda \psi(\lambda x)$ on $I$ instead of $\psi$ on $\lambda I$.

\ms
To complete the proof of Lemma~\ref{l1a5}, we still need to check that the curves $G_t$
satisfy the horizontal connection property \eqref{1a1}. This will is easy, because we constructed
each green curve $G_t$ to be symmetric with respect to the vertical axis. That is, 
$x \mapsto F_x(t)$ is odd (by \eqref{1a10}), then $H(-x, t) = (F_{-x}(t), t)$ is the symmetric of 
$H(x,t)$ with respect to the vertical axis, then $H^{-1}$ has the same property. In addition,
$n_0(-x,t)$ is the symmetric of $n_0(x,t)$ by \eqref{2c5}, so $n_0(-x,t)$ is the symmetric of $n_0(x,t)$ 
by \eqref{2c6}, and then the integral curve is symmetric.
\end{proof}

\begin{rem} \label{r2a4}
Our estimate when $J$ is smaller than $I$ is most probably suboptimal. For one thing,
it is possible that when we check the argument, we do not differentiate $\varphi$ more than $k$ times,
and also the homogeneity of norms could be improved.
The following remarks should help the reader get some good estimate if they so desire. 
First, reasonable norms for functions on an interval $I$ of length $\ell$ are the
\begin{equation} \label{}
\ol N_j(\psi) = \sum_{0 \leq i \leq j} \ell^{1-i} N_i(\psi),
\end{equation}
because their invariance allows an easy reduction to an interval of length $1$, and a similar remark applies to function in $J$.
Then, if we desire to modify $J$ alone and leave $I$, we may first construct the $R_x$
as before, with the estimates \eqref{2c4}, and when it comes to constructing and estimating the orthogonal $G_t$, 
observe that dilating $J$ amounts to changing the scalar product in $\R^2$, and eventually multiplying
$\psi(x) \varphi'(t)$ by a constant in the definition \eqref{2c7} of $e_0$. We leave the details.
\end{rem}

\medskip

We continue to expand our class of admissible targets and observe that we 
can treat all at once collections of functions $\psi$ as above, on the condition that they have disjoint supports.

\begin{defi}
For a possibly large interval $I$, we say that a function $\psi : \R \to \R$ belongs to the class $\Psi(I)$ when there exists a finite collection of disjoint intervals
$I_j \subset I$ for which $\psi$ is representable as 
\begin{equation}\label{1a13}
\psi = \sum_j  \psi_j \1_{I_j}, \quad \psi_j \in \Psi_0(I_j) \quad \text{ for all} \; j.
\end{equation}
In other words, 
\begin{equation}\label{1a12}
\psi(x) = \psi_j(x)  \ \text{ on } I_j
\ \text{ and } \psi(x) = 0  \ \text{ on } \R \sm \cup_j I_j.
\end{equation}
\end{defi}

\begin{lem}  \label{l1a7}
The conclusion of  Theorem \ref{t1} also holds when $\psi \in \Psi(I)$,  
with any $J$, and  with the same estimate on the $C^k$ norms as in Lemma \ref{l1a5}. 
\end{lem}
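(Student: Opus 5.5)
The plan is to build the net separately over each piece, by applying Lemma \ref{l1a5} in each column $I_j \times J$, and to glue the pieces along vertical lines. Write $\psi = \sum_j \psi_j \1_{I_j}$ as in \eqref{1a13}, with the $I_j = [a_j,b_j]$ disjoint and $\psi_j \in \Psi_0(I_j)$. Each $\psi_j$ is supported in $I_j$ and equals $\psi$ there, so $N_i(\psi_j) \leq N_i(\psi)$ for all $i$. In particular, the hypotheses $N_0(\psi_j) \leq 20^{-1}|J|$ and $N_1(\psi_j) < 1/2$ are inherited from those on $\psi$.

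\textbf{Local construction.} In each $I_j \times J$ we use the explicit construction from the proof of Lemma \ref{l1a5}, with one common cut-off $\varphi$ for all $j$. The red curves are the graphs of
\[
r_x(t) = x + \psi(x)\varphi(t) \quad \text{for } x \in I.
\]
This single formula covers all $j$ at once, and it gives $r_x(t) = x$ for $x \notin \cup_j I_j$. The green curves are the integral curves of $e_0(H^{-1})$, where $H(x,t) = (r_x(t),t)$.

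\textbf{Behaviour between the columns.} Outside $\cup_j I_j$ we have $\psi = 0$, hence $H = Id$ and $e_0 = (1,0)$ there. So the green curves are horizontal segments off the columns and the red curves are vertical segments. By the symmetry argument at the end of the proof of Lemma \ref{l1a5}, the green curve entering $I_j \times J$ at height $t$ at $x = a_j$ exits at height $t$ at $x = b_j$. Following $G_t$ from left to right, it therefore keeps height $t$ at every column boundary, which gives \eqref{1a1}. Property \eqref{1a2} holds by construction, and \eqref{1a3} holds because $\psi = 0$ at $0$ and $1$. The foliation and orthogonality properties are local, so they hold on each closed column and on the complement, hence on all of $R$.

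\textbf{Regularity across the gluing lines.} This is the step needing care. The fields $n$ and $e$ are globally given by the formulas \eqref{2c5}–\eqref{2c7} composed with $H^{-1}$. Now $H$ is a global $C^k$ diffeomorphism, because $\psi \in C^k(\R)$ as a sum of $C^k$ functions with disjoint supports (each $\psi_j$ vanishes to order $k$ at the endpoints of $I_j$). Hence $e$ and $n$ are $C^k$ on all of $R$, and no gluing issue arises for the fields themselves.

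\textbf{Estimates.} The bounds \eqref{2b13}–\eqref{2b15} are pointwise. At each point they involve only $\psi$ near that point, and they are trivially satisfied where $\psi = 0$. Since $N_i(\psi_j) \leq N_i(\psi)$, the estimate \eqref{2bb2} follows with the same constant.

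The one point needing a genuine check is the flow estimate \eqref{2b15}. A single green curve may cross many columns, and Cauchy–Lipschitz bounds could a priori accumulate along the way. I would handle this by applying the Cauchy–Lipschitz estimates to the global field $e_0(H^{-1})$ on all of $I \times J$ at once, exactly as in Lemma \ref{l1a5}, rather than column by column. This works because \eqref{2b14} holds globally with the norms of $\psi$, and the horizontal length $|I|$ is the same as before. The bound therefore does not depend on the number of intervals $I_j$.
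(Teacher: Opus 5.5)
Your construction is the paper's. You build the Lemma~\ref{l1a5} net in each column $I_j\times J$ and join the columns by horizontal green and vertical red segments. The single global formula $H(x,t)=(x+\psi(x)\varphi(t),t)$ is a clean way to see that the glued fields are $C^k$ and that the red curves foliate $R$. Your checks of \eqref{1a1}--\eqref{1a2} are right. For \eqref{1a3}, the top and bottom edges are green curves because $\varphi'$ vanishes near the ends of $J$, not because $\psi(0)=\psi(1)=0$.

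The step that does not work as written is the last one. A global Cauchy--Lipschitz (Gronwall) estimate for the flow of $e_0\circ H^{-1}$ bounds quantities like $\partial_t g_t(x)$ by $\exp\bigl(\int |\partial_T s(y,g_t(y))|\,dy\bigr)$, with $s$ the slope of $e$, integrated along the whole curve. That integral \emph{is} the accumulation across columns you were worried about. It gives constants depending on the total length $|I|$ (exponentially, as the proof of Lemma~\ref{l1a5} itself warns), not on the widths $|I_j|$.

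The remedy is already in your ``behaviour between the columns'' paragraph. Every green curve is at height exactly $t$ at both ends of each column and horizontal in between, so $g_t(x)=t$ for $x\notin\cup_j I_j$. Hence on $I_j\times J$ the maps $(x,t)\mapsto g_t(x)$ and $(x,t)\mapsto r_x(t)$ are exactly those produced by Lemma~\ref{l1a5} for $\psi_j$ on $I_j\times J$. The exact variational equation even gives $\int_{I_j}\partial_T s(y,g_t(y))\,dy=0$, so nothing accumulates; only the absolute-value Gronwall bound does.

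Therefore \eqref{2bb2} holds column by column, with $N_i(\psi_j)\le N_i(\psi)$ and with $\max_j|I_j|$ in place of $|I|$. This is the paper's point that ``what really counts is the maximal length of a support''. It lets $I$ be long, and it is what Section~\ref{S10} uses. There the bound is applied on $I_n\times J_n$ with $|I_n|\sim\ell^{j(n)}$ and $|J_n|\sim\ell^{j(n)}|J|$. A bound in terms of the full $|I|$ would instead carry a factor like $(1+|I|/|J_n|)^{m+1}$, which blows up as $j(n)$ grows.
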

\begin{proof}
We can think that this time we have disjoint rectangles with the same length of the vertical sides, and by Lemma \ref{l1a5}
we know how to build red and green curves on each of them. Then, in the rectangle $I \times J$, we complete the green curves by line segments, draw the vertical missing red curves, and get a solution for $\psi$.
See Figure \ref{fig4}. 

In general, it is trivial to compose mappings $f_j$ for which the $f_j - I$ have disjoint supports, 
but of course we have to keep a control on the $C^k$ norm in the constructed curves.
Here, what really counts is the maximal length of a support, so we may even have a longer $I$ than in the 
previous lemma, provided that the supports are under control. In this paper we will always use decompositions
with a finite number of pieces of the same length.
\end{proof}

\begin{figure}[!h]
  \centering 
 \includegraphics[width=11cm]{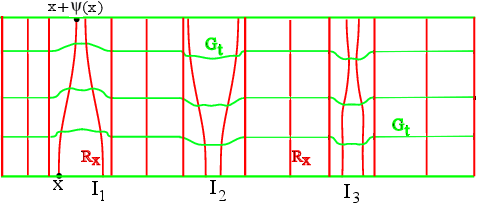} 
  \caption{Sums of antisymmetric data $\psi$ with disjoint supports}
\label{fig4}
\end{figure}

\begin{qn}  \label{q1a8}
If we want a simple proof of Theorem \ref{t1}, it is natural to ask the following question. Given $f$ as in the 
theorem (with $\varepsilon$ in \eqref{1a4} small enough), can we find a finite collection of mappings $\psi_j \in \Psi(I)$, $1 \leq i \leq N$,
so that with $f_j(x) = x + \psi_j(x)$, we have 
\begin{equation} \label{1a14}
f = f_N \circ \ldots \circ f_2 \circ f_1,
\end{equation}
if possible with a bound on $N$, a lower bound on the size of all the intervals of the decompositions \eqref{1a13},
and an estimate like $||\psi_j||_{m} 
\leq \delta(\varepsilon)$ where $\delta(\varepsilon)$ is an acceptable power of $\varepsilon$? 
The smoothness requirements on $\psi_j$ can be reasonably relaxed, as the reader could have guessed, given our main statement and the remarks that followed.  
\end{qn}

If the answer is yes, with all these estimates, we can prove Theorem \ref{t1} 
(maybe with less precise estimates, if $m$ is small and $\delta(\varepsilon)$ large) 
directly with Lemma \ref{l1a7} and a finite pile of rectangles.

\medskip

At the time we proved Theorem \ref{t1} and started to 
prepared this text for publication, we did not know the answer to this question. 
The existence of such a decomposition seemed to us like a lot to ask, and we had no clue  
about how to proceed. But there was some hope because there are lots of decompositions to try.

Yet, given the spectacular advancement of the LLM technologies, we judged it would be best to ask Question \ref{q1a8}
to an AI ourselves. First, it was our colleague Raul Jimenez who kindly transferred our information request to his version of ChatGPT Pro, and then the second author repeated the same experiment with the Plus version of ChatGPT 5.6 Sol, 
with more or less the same outcome. Namely, the answer is yes (modulo preparing our initial target with
the curtain trick of Section \ref{S3}), and it is even not hard to prove with earlier results 
of T. Tsuboi on the structure of the diffeomorphisms of the circle \cite{Ts}.

We give an account of the method in our last Section \ref{S12}, which we wrote by hand, the old way, but using 
ideas given by the AI.

\smallskip
We also decided to keep our old proof, where we replace the exact decomposition required in Question \ref{q1a8}
by a longer and more technical approximation argument. We did not do this entirely out of contempt; the proof by approximation 
may have some advantages that we list now.

First, the result of Tsuboi is not so easy to prove, and probably the total amount of work is still smaller with our 
proof. Yet the structure of the AI proof is simpler, as one only stacks a finite number of intervals $J$; 
see Section \ref{S12}.

The best exponents $m = m(k)$ (how many derivatives we get for the solution) and $1/2$ 
(giving the size of the solution) seem to be different. 
Apparently, adding \cite{MMY} to \cite{Ts} gives an even better $m$, and shorter decompositions. 
For the exponent $1/2$ , we seem to do better because we only use commutators of order $2$, 
and which are probably more explicit because of the construction of Section \ref{S5}.

But the main reason is that we believe our construction to be more flexible, and, in particular, it helps to prove a suitable multidimensional analogue of Theorem \ref{t1}, when, to our knowledge, no analogue of the finite decomposition \eqref{1a14} exists. This is the content of some work in progress. 

\subsection{A summary of the proof of Theorem \ref{t1} and the main approximation result}
\label{S2S2}

Let us finally give a brief overview of how our proof of Theorem~\ref{t1}, based on approximation and wavelets, is going to work. 

\smallskip
Recall that we wish to get a target $f = Id + \psi$ with $\psi$ ``small'', which we can try to reach by compositions, as stated in Lemma \ref{l1a4}. 
Typically, we shall proceed backwards, i.e., try to isolate an interesting part of $\psi$, find a constructible
mapping $\psi^\ast$ (i.e., one for which we can solve the problem of Theorem~\ref{t1}) that incorporates the interesting
part, and compose with the inverse of $\psi^\ast$ to reduce to a simpler target. We will do this many times. But this is equivalent to representing $f$ as a composition of mappings.

As announced, we will not be able to decompose $f$ exactly in finite time, so after finding some number of mappings for the representation of $f$, we will decide that our new target is ``small enough'', consider that we are done with Step 1, and restart with this last target at a smaller operating scale. We need to keep track of the size of some norms of the target, and the effect of the change of scales on those norms. Then we will iterate, prove some convergence, and hopefully conclude.

Let us explain what happens during one step of our iteration algorithm (e.g., during Step~1). To get a representation of $f = Id + \psi$ with $\psi$ ``small'', it makes sense to try to compose mappings $Id + s_i$ with some ``small'' compactly supported functions $s_i$. Although we know that composition is a disagreeable operation, e.g., not commutative, we will often be able to say that the composition is close to $Id + \sum_i s_i$, with second order estimates on the error. We give these basic error estimates in Section \ref{S4}, to be used a few times later. We would like to stress that the observation ``the composition of mappings $Id + s_i$ with small $s_i$ behaves like $Id + \sum_i s_i$'' is not complex, but it is crucial for the idea of this construction. Indeed, once we start thinking of sums, it becomes very natural to try to decompose $\psi$ into a sum of functions we already know how to target, and it should not seem surprising that we aim to do a wavelet decomposition.

So, at the center of Step 1 is the attempt to decompose the current target $\psi$ 
(we will indiscriminately refer to $f$ or $\psi = f - Id$ as targets) into wavelets,
where the wavelets $\psi_j$ will be chosen to be antisymmetric, so that the $Id + \psi_j$ are constructible by 
Lemma \ref{l1a5} or \ref{l1a7}. If we can do this, we will be able to ``subtract'' a finite number of these wavelets from $\psi$, use the fact that composition is almost the same as sum, and have a new target which is smaller, because it only contains errors of order $2$ and a remainder in a wavelet expansion. In Section~\ref{S6} we select a wavelet system that is composed of constructible mappings (by antisymmetry), and prove some estimates on the convergence of partial sums. 

Unfortunately, our initial target (or later, the first target at each stage) cannot always be decomposed
into odd wavelets. This is because our odd wavelets associated with scales $2^{-k}$, $k \geq 0$, 
do not span the whole $L^2$ locally, and typically leave an initial space $V_0$ of finite dimension.
For example, all our wavelets have integral zero, while this may not be true of our initial target
$\psi$. This is connected to Remarks \ref{r1a2} and \ref{r1a3}, because the wavelets may span the whole set of first order targets obtained by linearising our problem, but not the whole space of targets. 

Therefore, there is a certain number of basic targets, such as the small elements of the initial vector space $V_0$
in our wavelet construction, that we will need to construct by hand. We will not obtain the basis
of $V_0$ directly, but what we do will be enough to approximate its elements by constructible mappings. These elements are combinations of simple explicit {\bfseries commutators}, which we define and estimate 
in Section \ref{S5}, and the approximation of elements of $V_0$ will be done in Section \ref{S6}.

In short, at our Step 1, we take our initial target $\psi$, maybe make its support smaller using the curtain trick of Section \ref{S3}, remove a few commutators that approximate the $V_0$ part of $\psi$ from the representation,
then remove a big part of the wavelet expansion of the rest, and, at the end, obtain a new much smaller (in sup norm) target (because confusing compositions with sums only produces errors of order $2$, the $V_0$
part was well approximated by commutators, and the rest by wavelets). We make sure (in Section \ref{S8})
that with all these modifications, we do not increase $N_k(\psi)$ by much (much less than what we gain on the sup norm), and we ensure we can change scales and start Step 2 on better grounds than we had for Step 1.

Then, as promised, we change scales, reproduce Step $1$ at the new scale, iterate, check that our targets tend to the identity fast enough, collect estimates on the constructed curves, and conclude. This is the place where we find out that our $m$ is smaller than expected. We do all this in Section \ref{S9}.

\ms
We end this introduction with a statement of our main approximation result, which corresponds to one iteration in the algorithm described above. We first leave out some details to keep it almost readable, and give some comments.

\begin{thm}  \label{t1a9} 
Let $N \geq 1$ be large enough, and $\varepsilon > 0$ small enough (depending also on $k$ and $N$).
Take $f = Id + \psi$ as in Theorem \ref{t1}, with  $I = [0,1]$, and in particular assume that $\psi$ satisfies \eqref{1a4}. 
Then we can find a constant $C_0 \geq 1$, depending on $k$, and a finite collection of $C_0 N$ mappings $\psi_j \in \Psi(I)$, $1 \leq i \leq C_0 N$,
so that if we set $f_j(x) = x + \psi_j(x)$, and, for $1 \leq M \leq C_0 N$, 
\begin{equation} \label{1a15}
F_M = f_M \circ \ldots \circ f_2 \circ f_1,
\end{equation}
we have
\begin{equation} \label{1a16} 
N_k(F_M) := ||F_M^{(k)}||_{L^\infty} \leq C(1 + M)^2 N_k(\psi) + CN_0(\psi)
\ \text{ for } 1 \leq M \leq C_0 N
\end{equation}
and
\begin{eqnarray} \label{1a17}
||F_{C_0N} - f ||_{L^\infty} &\leq& C 2^{-N k} (N_k(\psi)+N_0(\psi)) + C (1+N)^2 (N_k(\psi)+N_0(\psi))^2  \nn\\
&\,& \hskip30pt+ C(1+N)^3 (N_k(\psi)+N_0(\psi))^3,
\end{eqnarray}
and all the intervals that show up in the decompositions of the $\psi_j$ have a length at least $2^{-N}$.

The conclusion also holds for intervals $I$ of length at least $1$, with the same constants, but under the additional condition that $||\psi||_\infty \leq \varepsilon$.
\end{thm}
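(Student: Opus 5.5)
The plan is to follow the outline of Subsection~\ref{S2S2}: approximate $f$ by a composition of three kinds of constructible maps, namely commutators for the finite-dimensional part, odd wavelets for the rest, and curtains to localize supports. The estimates \eqref{1a16}--\eqref{1a17} come from the principle that composing maps $Id+s_i$ with small $s_i$ behaves like $Id+\sum_i s_i$, up to second order errors.

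\textbf{Step 1 (localization).} Using the curtain trick of Section~\ref{S3}, reduce to a target supported in pieces of bounded length. When $|I|\ge 1$, the extra hypothesis $\|\psi\|_\infty\le\varepsilon$ lets us cut $\psi$ into pieces with disjoint supports, at the price of terms controlled by $N_0(\psi)$. This explains the term $CN_0(\psi)$ in \eqref{1a16}.

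\textbf{Step 2 (composition versus sum).} Establish the Section~\ref{S4} estimates. For maps $g_i=Id+s_i$, $i\le M$,
\[
\|g_M\circ\cdots\circ g_1-Id-\textstyle\sum_i s_i\|_\infty\le C M^2\max_i\|s_i\|_\infty\max_i\|s_i'\|_\infty .
\]
There is a companion $C^k$ bound
\[
N_k(g_M\circ\cdots\circ g_1)\le C(1+M)^2\max_i \bar N_k(s_i),
\]
obtained from the Fa\`a di Bruno/Bell formula \eqref{2b9} by induction on $M$. I will also need the third order term in the expansion, which produces the $(1+N)^3(\cdots)^3$ contribution in \eqref{1a17}.

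\textbf{Step 3 (finite-dimensional part).} Fix a compactly supported, sufficiently smooth odd wavelet system at scales $2^{-j}$, $0\le j\le N$, as in Section~\ref{S6}, with all supports of length at least $2^{-N}$. Its closed span misses a finite-dimensional space $V_0$, which contains for instance functions with nonzero integral. I approximate the $V_0$-component $P_0\psi$ by a bounded number of commutators $u^{-1}\circ v^{-1}\circ u\circ v$ of antisymmetric maps, built in Section~\ref{S5}. The key claim is that $u\circ v\circ u^{-1}\circ v^{-1}-Id\approx u'v-v'u$, a non-antisymmetric quantity. Choosing $u,v$ of size $\sim\|P_0\psi\|^{1/2}$ then lets us hit a basis of $V_0$ to first order, with error of order $\|P_0\psi\|^{3/2}$, which is dominated by the error terms in \eqref{1a17}. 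The inverses are constructible by Lemma~\ref{l1a4}.

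\textbf{Step 4 (wavelet part).} Group the wavelets at scales $2^{-j}$, $j\le N$, into $C_0$ families per scale, so that each family has disjoint supports and lies in $\Psi(I)$ by Lemma~\ref{l1a7}. This gives $C_0N$ maps in total. The truncation error of a $C^k$ function with vanishing moments is $C2^{-Nk}N_k(\psi)$. Composing all the maps and applying Step~2 gives \eqref{1a17}. For \eqref{1a16}, I use that the wavelet coefficients satisfy $|c_{j,\ell}|\lesssim 2^{-jk}N_k(\psi)$, so each $\psi_j$ has $N_k\lesssim N_k(\psi)$; the Step~2 bound on partial compositions then gives the $(1+M)^2$ factor.

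\textbf{Main obstacle.} I expect Step~3 to be the hardest. The commutator must genuinely reach all of $V_0$ with quantitative control, including a square-root loss in size. One also has to check that the commutator's $N_k$ norm stays bounded by $N_k(\psi)+N_0(\psi)$ rather than by its square root times some constant that would spoil \eqref{1a16}. If this fails, my first fallback is to apply the commutators after the wavelets, so that only their sup-norm errors enter \eqref{1a17}.
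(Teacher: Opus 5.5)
Your architecture (curtain, composition $\approx$ sum, commutators for the $V_0$-part, odd wavelets in $C_0$ disjoint families per scale) is the paper's. The gap is Step~3, which you flag yourself: it is asserted rather than proved, and as phrased it hides a real issue. The expansion $[u,v]-Id\approx u'v-v'u$ does not say which functions arise as $u'v-v'u$ with $u-Id$, $v-Id$ constructible. The explicit commutators of Section~\ref{S5} only produce rescaled copies of one fixed nonnegative bump $F$, and these do not lie in $V_0$.

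So the paper never approximates $P_0\psi$ itself. It places signed bumps $\mu_jF_j$, $\mu_j=\pm t_j^2/2$, on many short disjoint intervals $L_j$. It then solves a $p\times p$ linear system matching only the dual coefficients $\int\cdot\,\ol\varphi_i$, with $|\mu_j|\le CN_0(\psi_1)$. This works because $\rho^{-1}AF_0$ is within $C\ell^\beta$ of the identity, by biorthogonality \eqref{6a9} and the H\"older continuity of the $\ol\varphi_i$.

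With this mechanism, the commutator output $\sum_j\mu_jF_j+O(\mu_+^{3/2})$ has first-order wavelet components. Hence the wavelet step must be applied to the renewed target $\psi_2=f_{\psi_1}\circ f_{\psi_{proj}}^{-1}-Id$, whose $V_0$-part is $O((N_k(\psi)+N_0(\psi))^{3/2})$ by \eqref{6a16}, and not to $\psi-P_0\psi$. For the same reason your fallback (commutators after the wavelets) would leave those first-order wavelet components as an error in \eqref{1a17}.

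Your literal claim can be rescued by a different lemma, but you would have to supply it. Any $h\in C^k_c$ equals $a'b-ab'$ with $b\equiv1$ near $\supp h$ and $a=b\int_{-\infty}^x h$. Then $[Id+\lambda^{-1}a,Id+\lambda b]$ with $\lambda=\|h\|^{1/2}$ realizes $h$ up to $O(\|h\|^{3/2})$. After cutting $P_0\psi$ into short pieces, the paired-slot trick of Section~\ref{S12} makes both factors constructible. Also, Step~2 must be applied with each commutator counted as one map, using $N_k(R_t-Id)\le Ct^2$ from \eqref{4b36}. Applied factor by factor, $M^2\max_i\|s_i\|_\infty\max_i\|s_i'\|_\infty$ is of first order.

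Your worry about \eqref{1a16} is justified, and no reordering removes it. A partial composition $F_M$ that stops inside a commutator contains a factor with $N_k\sim|t_j|\sim(N_k(\psi)+N_0(\psi))^{1/2}$, which is \eqref{6b20}. The paper's argument yields \eqref{1a16} only for $M$ at the end of complete commutator blocks (via \eqref{4b36} and \eqref{6a17}). It handles the individual factors separately, with the square-root bound \eqref{9a3}, in Section~\ref{S10}.

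On the positive side, your sup-norm version of Step~2 genuinely simplifies the wavelet part. Apply it to the groups $\xi_m$ of $\psi_2$, composed without any correction. It gives directly
$\|F_{j_N}-Id-\xi_N\|_\infty\le\sum_m\|\xi_m'\|_\infty\|F_{m-1}-Id\|_\infty\le CN_k(\psi_2)^2$.
So the affine corrections $a_i$ of Section~\ref{S8}, and the extra derivative on $\theta$, are not needed. They serve only to bound $N_k(\delta)$, while \eqref{1a17} involves only $N_0(\delta)$.

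Finally, the $(N_k(\psi)+N_0(\psi))^{3/2}$ error is not of the quadratic or cubic type appearing in \eqref{1a17}. It is absorbed into the first term only because $\varepsilon$ may depend on $N$ (take $\varepsilon\le2^{-2Nk}$).
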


We added $CN_0(\psi)$ in \eqref{1a16} and \eqref{1a17} just to account for the case when 
$I$ is long or $\psi$ is not supported in an interval of length $1$; otherwise, $N_0(\psi)$ 
is controlled by $N_k(\psi)$ anyway. 
Also, we did not try to make \eqref{1a17}
look more beautiful: we just copied what we got. Finally, we used exactly $C_0 N$ mappings to simplify the 
notation, but if we need fewer, we can compose a few times with $f_M = I$ to comply with the statement.

Now let us explain what us going on in the statement of Theorem \ref{t1a9}. 
The parameter $N$ will be the number of generations that we use in a wavelet construction one step at a time, and we will need less than $C_0$ sub-collections of wavelets for each generation (so that the supports are disjoint each time), plus a few at the beginning (we do not apply wavelets immediately, as announced); the estimate \eqref{1a16} states that the $N_k$ norm of the objects we construct only increases slowly in $N$.

On the other hand, the point of \eqref{1a17} is that the $L^\infty$ approximation numbers go much faster to $0$. 
The reader should concentrate on the first term $2^{-N k} (N_k(\psi)+N_0(\psi))$, which decays fast, and not worry about the 
other terms, which look horrible but are of second and third order (in $\varepsilon$, or the $N_k$), and will be controlled easily
(by making $\varepsilon$ small at the end, before we iterate).

Note that one should not be alarmed by the dependence of $\varepsilon$ on $N$ mentioned in the statement of the theorem. The concern could arise from the fact that we iterate the result of Theorem \ref{t1a9} to obtain the conclusion of Theorem \ref{t1}, and it would be problematic to choose different $N$ at each iteration. Luckily, the number $N$ of generations that we use in a wavelet construction will be the same each time, as we will see in Section \ref{S9}, so there is no problem.

As mentioned already, once we prove Theorem \ref{t1a9}, we will change scales so that the normalized $N_k(f_{C_0 N})$ remains small, and we will iterate the construction. This will allow us to approximate our initial target $f$ by a sequence of targets $f_j$ that are compositions of constructible mappings, and that will converge to $f$ 
in some lesser $C^m$ norm.
Then we will use these mappings to construct a tower of curves and prove Theorem \ref{t1}.

\medskip

There is a detail in the algorithm that we have thus far almost kept under the carpet. 
When we iterate, we no longer know if the new target $\psi$ for the next generation 
(computed from $f$ and $F_N$) is compactly supported by intervals of the next generation (which will be much smaller). So, even for Theorem \ref{t1}, we will need to consider the case of functions that are not necessarily 
supported in $I$, but we compensate by assuming that
\begin{equation} \label{1a18}
N_0(\psi) = ||\psi||_{L^\infty} \leq \varepsilon.
\end{equation}
This will not make things harder, thanks to a manipulation that we call the curtain trick, which reduces matters
to the case when $\psi$ vanishes near both endpoints of $I$. We describe it just below in Section \ref{S3}.

\section{The curtain trick and our first change of targets}
\label{S3}

Let $f = Id + \psi$ be as in Theorem \ref{t1}. Even though $\psi$ is supported in $I = [0,1]$, 
we don't like too much working too close to the endpoints of $I$. 
So we first construct an easy target $f_c = Id + \psi_c$ with the same values near the endpoints of $I$ as $f$, 
and composing with $f_c^{-1}$ will allow us to assume that $\psi$ vanishes near the boundary of $I$. 
Later on, we will work with functions that are not even supported on $I$, but will be small in sup norm,
and we will use the same trick to reduce to compactly supported functions.

Let us do the trick near $0$, and discuss the general case later. Let $\varphi$ be a symmetric smooth cut-off
function, with in particular $\varphi(x) = 1$ on $[-\frac{1}{20}, \frac{1}{20}]$, and $\varphi = 0$ on $\R \sm [-\frac{1}{10}, \frac{1}{10}]$.
Then define $\psi_c$ by 
\begin{equation} \label{2e1}
\psi_c(x) = \varphi(x) \psi(x) - \varphi\left(\frac{1}{5} - x\right) \psi\left(\frac{1}{5} - x\right).
\end{equation}
Thus we make $\psi_c$ antisymmetric with respect to $\frac{1}{10}$. It is supported
on $[0, \frac{1}{5}]$ if $\psi$ was supported on $I$ (as in the theorem), and only on
$[-\frac{1}{10}, \frac{3}{10}]$ if $\psi$ had support on the negative half-ray of $\R$ (and the support on the positive half-ray beyond $\frac{3}{10}$ does not count at all). 

Observe that, since $\psi_c$ is antisymmetric on its support, $f_c = I + \psi_c$ is one of our easy antisymmetric targets. 

Let us now check its derivatives to see that composing with it
does not destroy our estimates. First of all,
\begin{equation} \label{2e2}
N_0(\psi_c) \leq N_0(\psi) \leq \varepsilon
\end{equation}
because $\varphi(x)$ and $\varphi(\frac{1}{5} - x)$ have disjoint supports.
Next, for the homogeneous $k$-norm, since $\varphi$ and its derivatives are bounded, we 
claim that 
\begin{equation} \label{2e3}
N_k(\psi_c) \leq C \sum_{j \leq k} N_j(\psi) \leq C N_0(\psi) + C N_k(\psi) \leq C \left(\varepsilon + N_k(\psi)\right);
\end{equation}
where we only need to say why $N_j(\psi) \leq C N_0(\psi) + C N_k(\psi)$ for $1 \leq j \leq k$.
Let $P_k(x) = \sum_{0 \leq j \leq k} a_j X^j$ be the Taylor polynomial of $\psi$ at $0$; thus
$a_j = \frac{1}{j!} \psi^{(j)}(0)$. Then on $[0,1]$, $N_0(\psi - P_k) \leq C N_k(\psi)$
by multiple integrations or directly the Taylor formula with integral remainder. So 
$N_0(P_k) \leq N_0(\psi) + C N_k(\psi)$, and then by equivalence of norms on the vector space
of polynomials of degree $\leq k$, $a_j \leq C N_0(P_k) \leq C N_0(\psi) + C N_k(\psi)$ for all $j$.
Finally $|\psi^{(j)}| \leq C a_j + C N_k(\psi)$ on $[0,1]$, by the the Taylor formula again, which proves \eqref{2e3}.

Now recall that we want to approximate $f_\psi = Id + \psi$ by a composition of constructible mappings, 
and our first one will be $f_c$. That is, we write 
\begin{equation} \label{2e4}
f_\psi = f_{\psi_1} \circ f_c \, ,
\end{equation}
which is the same as $f_{\psi_1} = f_\psi \circ f_c^{-1}$, and our new target will now be
\begin{equation} \label{2e5}
\psi_1 = f_{\psi_1} - Id = f_\psi \circ f_c^{-1} - Id. 
\end{equation}
The clear advantage of $\psi_1$ is that, since $\psi_c = \psi$ on $[-\frac{1}{20}, \frac{1}{20}]$ by construction, 
we also have $f_c = f_\psi$ on $[-\frac{1}{20}, \frac{1}{20}]$, hence $f_\psi \circ f_c^{-1} = Id$ on $[-\frac{1}{30}, \frac{1}{30}]$: naturally we lose a bit 
in the size of the interval, but not too much, since $N_1(\psi_c)$ is small, so $f_c$ is almost $1$-Lipschitz. Thus, we get that
\begin{equation} \label{2e6}
\psi_1 = 0 \text{ on } \left[-\frac{1}{30}, \frac{1}{30}\right].
\end{equation}

We can do the same thing near the other endpoint $1$ of $I$, even at the same time because our two
transformations are independent. Then we also get that 
\begin{equation} \label{2e7}
\psi_1 = 0 \text{ on } \left[1 - \frac{1}{30}, 1 + \frac{1}{30}\right].
\end{equation}
Altogether, our new target $\psi_1$ is such that
\begin{equation} \label{2e8}
\psi_1 \text{ is supported on } \left[\frac{1}{30}, 1 - \frac{1}{30}\right],
\end{equation}
as long as $\psi$ was supported on $I$. 

We will call this the curtain trick (because we hide the sides of the window!). 
To apply it successfully, we also need to check that 
$\psi_1$ is nearly as good as $\psi$, i.e., that 
\begin{equation} \label{2e9}
N_k(\psi_1) \leq C N_k(\psi) + C N_0(\psi).
\end{equation}
This follows from \eqref{2e2} and \eqref{2e3} with the aid of computations from the next section, 
which we prefer to do only once, for a more general case. See below \eqref{3e13}.

\section{Composition, inverses, and nonlinearity}
\label{S4}

We shall often need to revise targets (as above in \eqref{2e4} and \eqref{2e5}), 
and then estimate the norms of the revised $\psi$-mappings (as in \eqref{2e9}). 
In this section we do the basic norm computations for compositions and inverses. 
This is also the place where we explain, among other things, 
why a composition of maps $f_\varphi = Id + \varphi$ and $f_\psi = Id + \psi$ behaves like $Id + \varphi + \psi$, as announced.

\medskip

Recall that we only consider mappings $\psi$, $\varphi$ such that 
$N_1(\varphi), N_1(\psi) < 1/2$, so $f_\varphi$ and $f_\psi$ are increasing $2$-bilipschitz  bijections, 
although we are interested in the case of small norms of $\psi$ and $\varphi$. We also assume that they are supported in intervals of length $1$. Then the main norms to study are $N_k(\varphi)$ and $N_k(\psi)$, 
because the intermediate $N_j$ are controlled by \eqref{1a6}.

We start with estimates for compositions. Although we could bypass it, it sounds clearer to define a nonlinear (and noncommutative) law $\ast$ as follows.
\begin{defi}
    For $\psi, \varphi$ as above, set 
\begin{equation} \label{3e1}
\psi \ast \varphi = f_\psi \circ f_\varphi - Id = (Id + \psi) \circ (Id + \varphi) - Id. 
\end{equation}
More explicitly,
\begin{equation} \label{3e2}
\psi \ast \varphi(x) = (Id + \psi)(x + \varphi(x)) - x = \varphi(x) + \psi(x+\varphi(x)).
\end{equation}
\end{defi}
\begin{lem}
    We claim that for $N_k(\varphi) + N_k(\psi) < 1$, say, 
\begin{equation} \label{3e3}
N_k(\psi \ast \varphi) \leq N_k(\varphi) + N_k(\psi) + C N_k(\psi) N_k(\varphi). 
\end{equation}
\end{lem}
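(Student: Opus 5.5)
The plan is to differentiate the explicit formula \eqref{3e2}, $\psi\ast\varphi(x) = \varphi(x) + \psi(x+\varphi(x))$, $k$ times with the composition formula \eqref{2b9}. The term $\varphi^{(k)}$ contributes at most $N_k(\varphi)$, so the real work is the $k$-th derivative of $\psi\circ f_\varphi$. By \eqref{2b9},
$$
(\psi\circ f_\varphi)^{(k)}(x) = \sum_{j=1}^k \psi^{(j)}(x+\varphi(x))\, B_{k,j}\big(f_\varphi'(x),\ldots,f_\varphi^{(k-j+1)}(x)\big),
$$
where $f_\varphi' = 1+\varphi'$ and $f_\varphi^{(i)} = \varphi^{(i)}$ for $i\geq 2$. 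We will separate the top term $j=k$ from the rest.

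For $j=k$ the Bell polynomial is $B_{k,k} = (f_\varphi')^k = (1+\varphi')^k$. So this term equals $\psi^{(k)}(x+\varphi(x))\,(1+\varphi'(x))^k$. Its absolute value is at most
$$
N_k(\psi)\,(1+N_1(\varphi))^k \leq N_k(\psi) + C N_k(\psi) N_1(\varphi).
$$
By \eqref{1a6}, $N_1(\varphi) \leq C N_k(\varphi)$ since $\varphi$ is supported in an interval of length $1$. The bound is justified because $N_1(\varphi)\le 1/2$, and it gives $N_k(\psi) + C N_k(\psi)N_k(\varphi)$, which is the shape we want.

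For $1\le j<k$, every monomial of $B_{k,j}$ has total order $k$ spread over $j$ factors, so at least one factor is $f_\varphi^{(i)}=\varphi^{(i)}$ with $i\geq 2$. Each such factor is bounded by $N_i(\varphi)\le C N_k(\varphi)$ by \eqref{1a6}, and the remaining factors are bounded by constants, using $|1+\varphi'|\le 3/2$ and $N_i(\varphi)\le C$. Also $|\psi^{(j)}|\le N_j(\psi)\le C N_k(\psi)$, again by \eqref{1a6}, since $\psi$ is supported in a unit interval. Each lower term is therefore bounded by $C N_k(\psi) N_k(\varphi)$, and summing over $j$ gives \eqref{3e3}.

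The main point to watch is that no lower-order term contributes something linear in only one of $\psi,\varphi$. This is guaranteed by the fact that every monomial of $B_{k,j}$ with $j<k$ contains a derivative of $\varphi$ of order at least $2$, which we can check by induction on $k$ from the recursion for \eqref{2b9}. We also need the support hypothesis so that \eqref{1a6} applies to both functions; if only the weaker hypothesis \eqref{1a18} were available, $N_0$ terms would appear, which is why the later statements carry an extra $CN_0$.
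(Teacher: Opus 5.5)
Your proof is correct and follows the paper's argument. You isolate the top term $(1+\varphi')^k\,\psi^{(k)}\circ f_\varphi$, bound its deviation from $\psi^{(k)}\circ f_\varphi$ by $CN_k(\psi)N_1(\varphi)$, and observe that every other term of the expansion \eqref{2b9} contains some $\varphi^{(i)}$ with $i\ge 2$ together with a derivative of $\psi$, with \eqref{1a6} converting everything to $N_k$; your Bell-polynomial bookkeeping simply makes explicit the paper's remark that in those terms ``the large piece of the derivative of $f$ disappears.''
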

\begin{proof}
Set $f = f_\varphi$ to save notation. Clearly, we only need to estimate the derivatives of $\psi(x+\varphi(x)) = \psi \circ f(x)$.
When we differentiate a total of $k$ times, there is one special term where all the derivatives are taken on $\psi$, which is 
\begin{equation} \label{3e4}
A(x) = (f'(x))^k \psi^{(k)}(f(x)) = (1+ \varphi'(x))^k \psi^{(k)}(f(x)).
\end{equation}
Recall that our norms are small, so
\begin{equation} \label{3e5}
|A(x) - \psi^{(k)}(f(x))| \leq C |\psi^{(k)}(f(x))| |\varphi'(x)| \leq C N_k(\psi) N_1(\varphi) \leq C N_k(\psi) N_k(\varphi),
\end{equation}
which fits \eqref{3e3}. For the other terms, $f'$ is differentiated at least once, 
so the large piece of the derivative of $f$ disappears, and there is always at least one 
$N_j(\varphi)$ and one $N_\ell(\psi)$ in each term. 
This gives \eqref{3e3}, possibly with a large $C$ (depending on $k$).
\end{proof}

The estimate $\eqref{3e3}$ is what we mean when we say that the composition of $f_\psi$ and $f_\varphi$ behaves like $Id + \psi + \varphi$: if $\psi$ and $\varphi$ are small in the $N_k$ norm, the error is 
of order $2$, much smaller  than $N_k(\psi)$ or $N_k(\varphi)$.

\ms
We will also need an estimate for the $\psi$-function associated with the inverse bijection 
$f_\psi^{-1}$ of $f_\psi = I + \psi$, which we call 
\begin{equation} \label{3e6}
\wt\psi = f_\psi^{-1} - I.  
\end{equation}
\begin{lem}
For the symbol $\wt\psi$ of the inverse mapping, one has 
\begin{equation} \label{3e10}
N_k(\wt\psi) \leq N_k(\psi) + C N_k(\psi)^2,
\end{equation}
i.e., the inverse is not much worse than the mapping itself.
\end{lem}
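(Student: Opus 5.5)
The natural approach is to exploit the identity defining the inverse, $f_\psi\circ f_\psi^{-1}=Id$, which in the notation of \eqref{3e1}--\eqref{3e2} reads $\psi\ast\wt\psi=0$. That is,
\begin{equation*}
\wt\psi(x) = -\psi\big(x+\wt\psi(x)\big) = -\psi\circ f_{\wt\psi}(x).
\end{equation*}
So $\wt\psi$ is, up to sign, a composition of $\psi$ with a $2$-bilipschitz increasing map. The plan is to differentiate this relation $k$ times, exactly as in the proof of \eqref{3e3}, and read off an estimate for $N_k(\wt\psi)$ that is linear in $N_k(\psi)$ up to quadratic errors.

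First I would record the a priori facts. Since $N_1(\psi)<1/2$, the inverse $f_\psi^{-1}$ is $2$-Lipschitz. From $\wt\psi'=-\psi'(f_{\wt\psi})(1+\wt\psi')$ we get $\wt\psi' = -\psi'\circ f_{\wt\psi}/(1+\psi'\circ f_{\wt\psi})$, hence $N_1(\wt\psi)\le 2N_1(\psi)$. Also $\wt\psi$ is supported in the same interval of length $1$ as $\psi$, because $f_\psi$ is the identity off that interval and preserves it. So \eqref{1a6} applies to $\wt\psi$ too, and we have $N_j(\wt\psi)\le N_k(\wt\psi)$ for $j\le k$.

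Next I would apply the Fa\`a di Bruno formula \eqref{2b9} to $\wt\psi=-\psi\circ f$, with $f=f_{\wt\psi}$. The top term is $-(1+\wt\psi'(x))^k\psi^{(k)}(f(x))$. By the argument of \eqref{3e5}, it differs from $-\psi^{(k)}(f(x))$ by at most $C N_k(\psi)N_1(\wt\psi)\le CN_k(\psi)^2$. Every other term has at least one factor $\wt\psi^{(i)}$ with $i\ge2$, times some $\psi^{(j)}(f)$ and bounded powers of $f'$. This gives
\begin{equation*}
N_k(\wt\psi)\le N_k(\psi)+CN_k(\psi)^2+C N_k(\psi)\sum_{2\le i\le k}N_i(\wt\psi).
\end{equation*}

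The only subtlety is that $N_k(\wt\psi)$, or the lower-order norms of $\wt\psi$, appear on the right. I would handle this by induction on the order $j=2,\dots,k$. Taking the same identity at order $j$, the derivatives $\wt\psi^{(i)}$ that appear on the right have $i\le j$. The one with $i=j$ appears only in the term $\psi'(f)\wt\psi^{(j)}$. Moving that term to the left and dividing by $1+\psi'(f)\ge1/2$ gives $N_j(\wt\psi)\le C\,\ol N_j(\psi)$, where $\ol N_j(\psi)=\sum_{i\le j}N_i(\psi)$ as in the proof of Lemma \ref{l1a5}. By \eqref{1a6} this is at most $C N_k(\psi)$. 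This preliminary linear bound requires only $\ol N_k(\psi)$ small, which follows from $N_k(\psi)<1$ and \eqref{1a6} if the constant there is harmless.

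Plugging $N_i(\wt\psi)\le CN_k(\psi)$ back into the displayed inequality then gives \eqref{3e10}. The main obstacle is this bookkeeping, keeping the leading coefficient exactly $1$ rather than $C$. It is resolved by isolating the top term as above and treating all other terms only through the crude linear bound.
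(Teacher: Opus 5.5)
Your proof is correct and follows the paper's argument: differentiate $\psi\ast\wt\psi=0$ $k$ times, isolate the top term $\psi^{(k)}\circ f_{\wt\psi}$, get a crude linear bound on $N_k(\wt\psi)$, and feed it back in so that every remaining term is quadratic in $N_k(\psi)$. The only difference is how you get the crude bound. The paper absorbs $C N_k(\wt\psi)N_k(\psi)$ into the left-hand side when $N_k(\psi)<1/(2C)$, which yields $N_k(\wt\psi)\le 2N_k(\psi)$. Your induction on the order $j$ instead uses the fact that $\wt\psi^{(j)}$ enters the order-$j$ identity only through $\psi'\circ f_{\wt\psi}\,\wt\psi^{(j)}$, so it needs no a priori control on $\wt\psi$ and is slightly cleaner.
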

\begin{proof}
We claim that 
\begin{equation} \label{3e7}
||\wt\psi^{(k)}+ \psi^{(k)}\circ f_\psi^{-1}||_{L^{\infty}} \leq C N_k(\psi)^2.
\end{equation}
To show this, we use the computation above with $\varphi = \wt \psi$, and we use the fact that $\psi \ast \varphi = 0$: indeed, $f_\psi \circ f_\varphi  = f_\psi \circ f_\psi^{-1}= I$.
The $k$-th derivative that we compute is $0$, and, at the same time, it is the sum of $\varphi^{(k)}$, 
the main term in $A$ which is  $\psi^{(k)} \circ f_\varphi$, and an error term that is less than $C N_k(\varphi) N_k(\psi)$
by \eqref{3e5}. 
That is, 
\begin{equation} \label{3e8}
|\varphi^{(k)} + \psi^{(k)} \circ f_\varphi| \leq C N_k(\varphi) N_k(\psi).
\end{equation}
Then, by the triangle inequality and because composing with $f_\varphi$ does not change sup norms,
\begin{equation} \label{3e9}
N_k(\varphi) \leq N_k(\psi) + C N_k(\varphi) N_k(\psi).
\end{equation}
If $N_k(\psi)$ is small enough (smaller than $\frac{1}{2C}$), this yields $N_k(\varphi) \leq 2N_k(\psi)$, and then \eqref{3e7} follows from \eqref{3e8}.
When $N_k(\psi)$ is not small, we will not need the estimate, but it is true anyway
because we assume that $N_1(\psi)\leq1/2$, so $f_\psi$ to be $2$-bilipschitz, 
and we can compute $\wt\psi^{(k)}$ brutally. 
The estimate $\eqref{3e10}$ follows, again because composing does not change the sup norm. 
 \end{proof}

\ms
We were interested in \eqref{3e10} mainly because, in the sequel, it will often happen that we have a target $\psi$ and its approximation $\psi_a$, and we want to write $f_\psi$ as a composition
\begin{equation} \label{3e11}
f_\psi = f_{\psi_1} \circ f_{\psi_a}, \text{ or equivalently } f_{\psi_1} = f_{\psi_a} \circ f_\psi^{-1},
\end{equation}
and, among other things, we will need to check that $N_k(\psi_1)$ is not much larger than $N(\psi)$.
With the notation \eqref{3e1} above, \eqref{3e11} says that
\begin{equation} \label{3e12}
\psi_1 = \psi_a \ast \wt\psi,
\end{equation}
and then, by \eqref{3e3} and \eqref{3e10},
$$
N_k(\psi_1) \leq N_k(\psi_a) + N_k(\wt\psi) + C N_k(\psi_a) N_k(\wt\psi)$$
\begin{equation} \label{3e13}
\leq N_k(\psi_a) + N_k(\psi) + C N_k(\psi)^2 + C N_k(\psi_a) N_k(\psi)  + CN_k(\psi_a)N_k(\psi)^2.
\end{equation}
The main point of this estimate is that our errors are small and additive (we never multiply a norm by $2$). 

In the case of the curtain target $\psi_1$ obtained from $\psi_a = \psi_c$
(compare \eqref{2e4} with \eqref{3e11}), we get that 
$N_k(\psi_1) \leq C N_k(\psi) + C N_0(\psi)$ by \eqref{2e2} and \eqref{2e3}
as announced in \eqref{2e9}.

\ms

To wrap up this section and comment some more on the estimates which lead to the intuition ``$\psi \ast \varphi$ behaves like $\varphi + \psi$'', let us remark that, in some cases, we will not be very happy with \eqref{3e13} alone.
Instead, we will need to prove that, for instance, $N_0(\psi_1)$ is much smaller than $N_0(\psi)$ or $N_k(\psi)$, typically because $\psi_a$ was well chosen. This will use a more precise nonlinear estimate for $\psi \ast \varphi - \varphi - \psi$, which we will need to be careful about, since it 
will involve an additional derivative. Let us show what these estimates will look like.

Given $\varphi$ and $\psi$ as above, we are interested in the size of the nonlinearity
$$
\delta = \delta(\psi, \varphi) = \psi \ast \varphi - \varphi - \psi
= \psi \circ f_\varphi - \psi
$$
by \eqref{3e2} (with $f_\varphi = I + \varphi$).
We want to estimate $N_j(\delta)$ (for $0 \leq j \leq k$),  
and we claim that the main term is 
$$
A_\delta(x) = \psi^{(j)} \circ f (x) - \psi^{(j)}(x) = \psi^{(j)}(x+\varphi(x)) - \psi^{(j)}(x),   
$$
which we intend to estimate brutally by
$$
|A_\delta(x)| \leq |\varphi(x)| \, ||\psi^{(j+1)}||_{L^\infty}. 
$$
As we said earlier, this is dangerous, especially when $j=k$, but we'll apply this only when $\psi$ has a controlled
$(k+1)$ regularity (it will be a wavelet). Estimating the other terms of $N_k(\delta)$ as above, one could get
$$
\delta(\psi, \varphi) \leq  |\varphi(x)| \, ||\psi^{(k+1)}||_{L^\infty} 
+ C N_k(\varphi) N_k(\psi),
$$
but we'll do this computation later, slightly differently.

\section{The beauty of commutators}
\label{S5}

Most of our ``approximations'' ($\psi_a$, as in the previous section) of $\psi$ will be antisymmetric wavelets, because they are constructible, but alone they do not span the whole desired space, as was mentioned already (e.g., they all have integral zero). Therefore, a related question is: can we construct (using compositions) non-trivial mappings $f$ such that $f(x) \geq x$ for all $x$ (so that globally all the red curves tend to turn right)? 

This is not so easy to do at first sight, so we were puzzled for some time about whether or not the answer to the question is positive. Vaguely, the idea why is that we almost have a (noncommutative) group here, and there is no reason why this group should not be dense. Plus (after the fact), commutators are often interesting elements in noncommutative groups. 

In this section we estimate commutators (for $\ast$) of some elementary constructible mappings, 
which we shall be able to use to project on a first layer of a basis of wavelets (the ones that don't
have a vanishing integral). Not too surprisingly, the effect of the commutator will be of order $2$
compared with the functions we put in; this is the main reason why some of our estimates are no as good as we could hope, and the main cost for these estimates will be in the size of the norms of the graphs that we construct from these commutators. 

\medskip
To describe our examples, the simplest will be to work on $I = [0,5]$, and later translate and dilate the support. Our two functions (usually $\varphi$ and $\psi$) will be of the form $sa$ and $tb$, where $s$, $t \in \R$ are small parameters; we shall even choose
$s=t$ to simplify computations later. 

Given a function $a$ supported on $[0,4]$, we set
\begin{equation}\label{4b1}
b(x) = a(x-1) \text{ for } x \in \R,
\end{equation}
so $b$ is supported on $[1,5]$. Then we define $f = f_{sa}$ and $g = f_{tb}$ by 
\begin{equation}\label{4b2}
f(x) = x + s a(x)  \ \text{ and } g(x) = x + tb(x),
\end{equation}
which will be constructible because $a$ will be antisymmetric with the midpoint $x=2$
(and $b$ with $x=3$).
We are interested in the commutator 
\begin{equation}\label{4b3}
R = g^{-1} \circ f^{-1} \circ g \circ f ,
\end{equation}
and in particular in proving the fact that $R(x) \geq x$ on $\R$. Since $f$ and $g$ are 
increasing (if $s$ and $t$ are small enough), $R(x) \geq x$ is the same as
$f \circ g \circ R(x) \geq f \circ g(x)$, i.e., as
\begin{equation}\label{4b4}
g \circ f(x) \geq f\circ g(x).
\end{equation}
So we will compute the difference 
\begin{equation}\label{4b5}
\Delta = g \circ f - f \circ g,
\end{equation}
show that it is positive in our examples, and even compare its average size with the product $st$
(equal to $t^2$ in our case), because we want uniform nondegeneracy estimates.

\medskip

We want to start the presentation with two examples simpler than the one we use for the main algorithm, just to give some intuition on how these can be constructed. Our easiest example is when $a : I \to \R$ is the basic zig-zag centred at $2$, 
defined by
\begin{equation}\label{4b6} 
a(x) = 0 \text{ on } \R \sm [0,4] , \ 
a(x) = x \text{ on } [0,1], \ 
a(x) = 2-x \text{ on } [1,3], \ 
a(x) = x-4 \text{ on } [3,4].
\end{equation}
By direct computations, that we shall not copy here because in the end we need a smoother example, one can check that, 
for $s = t$ small enough, $\Delta \geq 0$ everywhere and $\Delta \geq C^{-1} t^2$ on a significant part of $I$.
Instead of a computation, we give here a picture of the graphs of $a$, $b$, and $t^{-2} \Delta$.
See Figure \ref{f1}. 
\begin{figure}[!h] 
  \centering 
 \includegraphics[width=8cm]{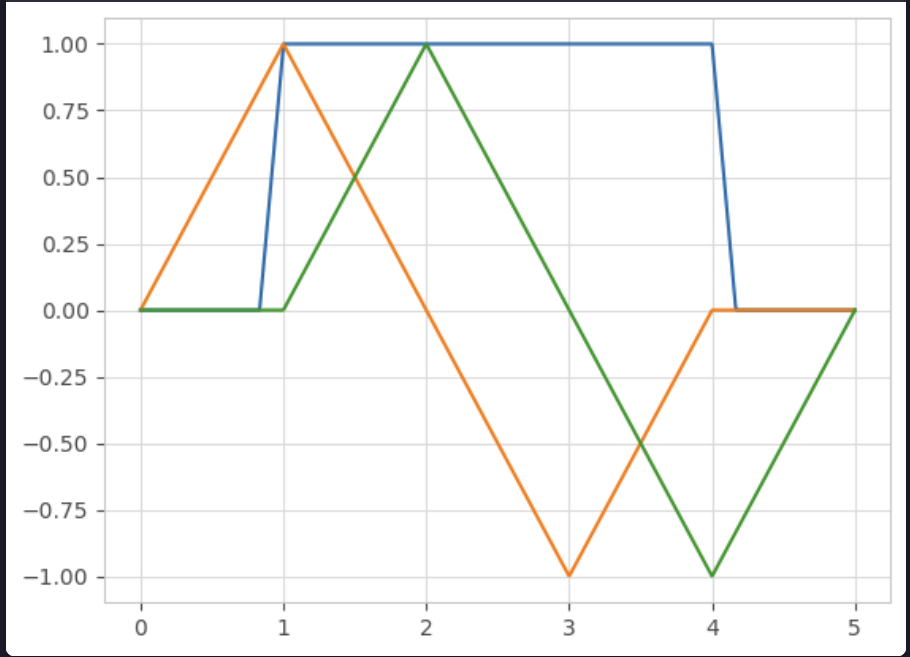} 
\caption{The output of a Python computation, for the functions $a$ (in yellow), $b$ (in green), and $t^{-2}\Delta$ (in blue). Here $t=\frac{1}{5}$; smaller values only make the blue slopes larger.} 
\label{f1}
\end{figure} 

The  reader may have noticed that $a$ and $b$ look vaguely like two truncated sine functions, and indeed the function
$a$ defined by
\begin{equation} \label{4b7}
a(x) = \1_{[0,4]} \sin\left(\frac{\pi x}{2}\right) 
\end{equation}
seems to works even better, due to a simple identity on the sine functions and their derivatives. 
Again we do not show any computations because we shall do a smooth cut-off version, 
and instead give the graphs of $a$, $b$, and $t^{-2} \Delta$ 
for two values of $t$.
See Figure \ref{f2}. 

\begin{figure}[!h] 
  \centering 
  \includegraphics[width=8cm]{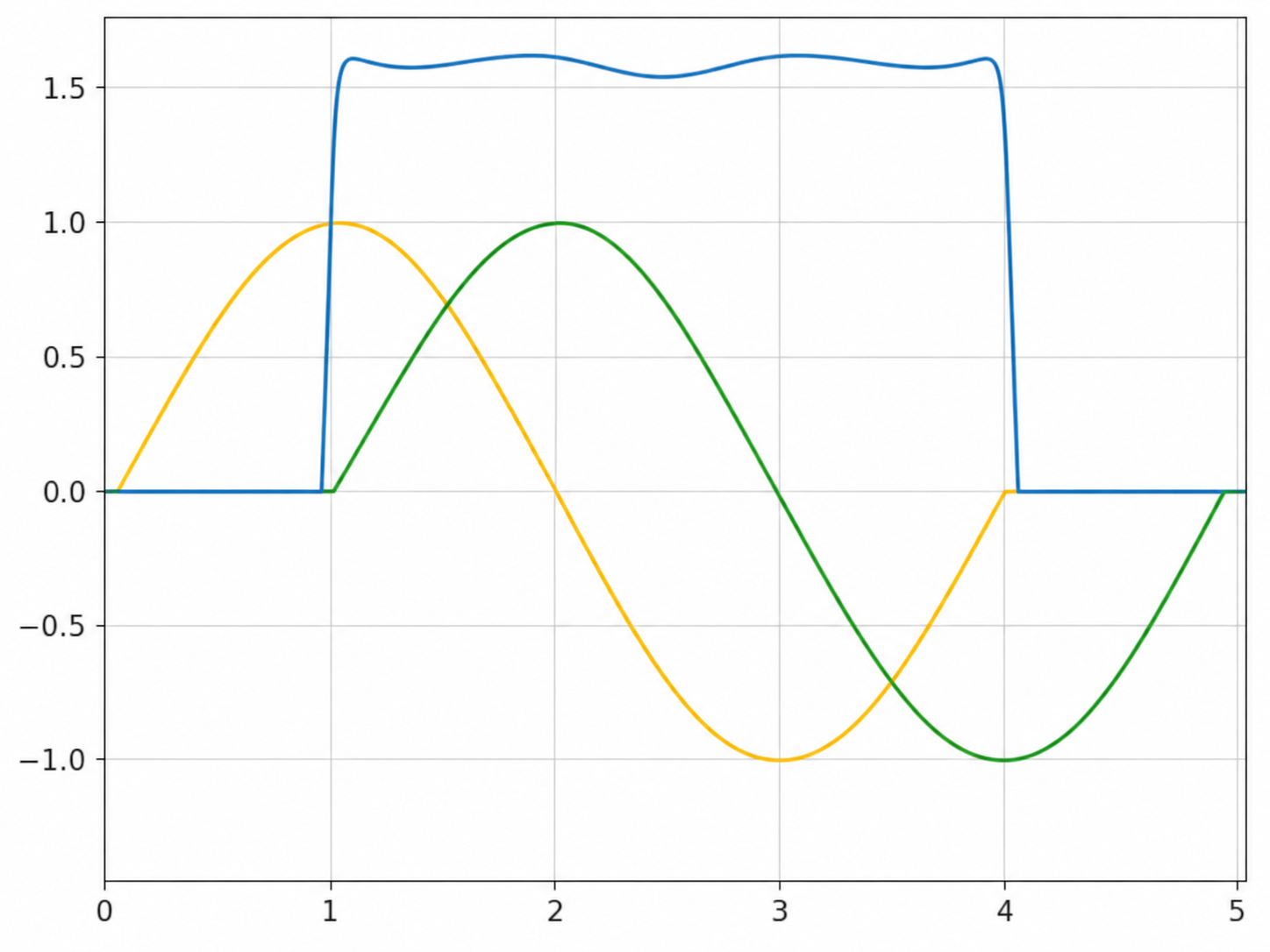} 
  \includegraphics[width=8cm]{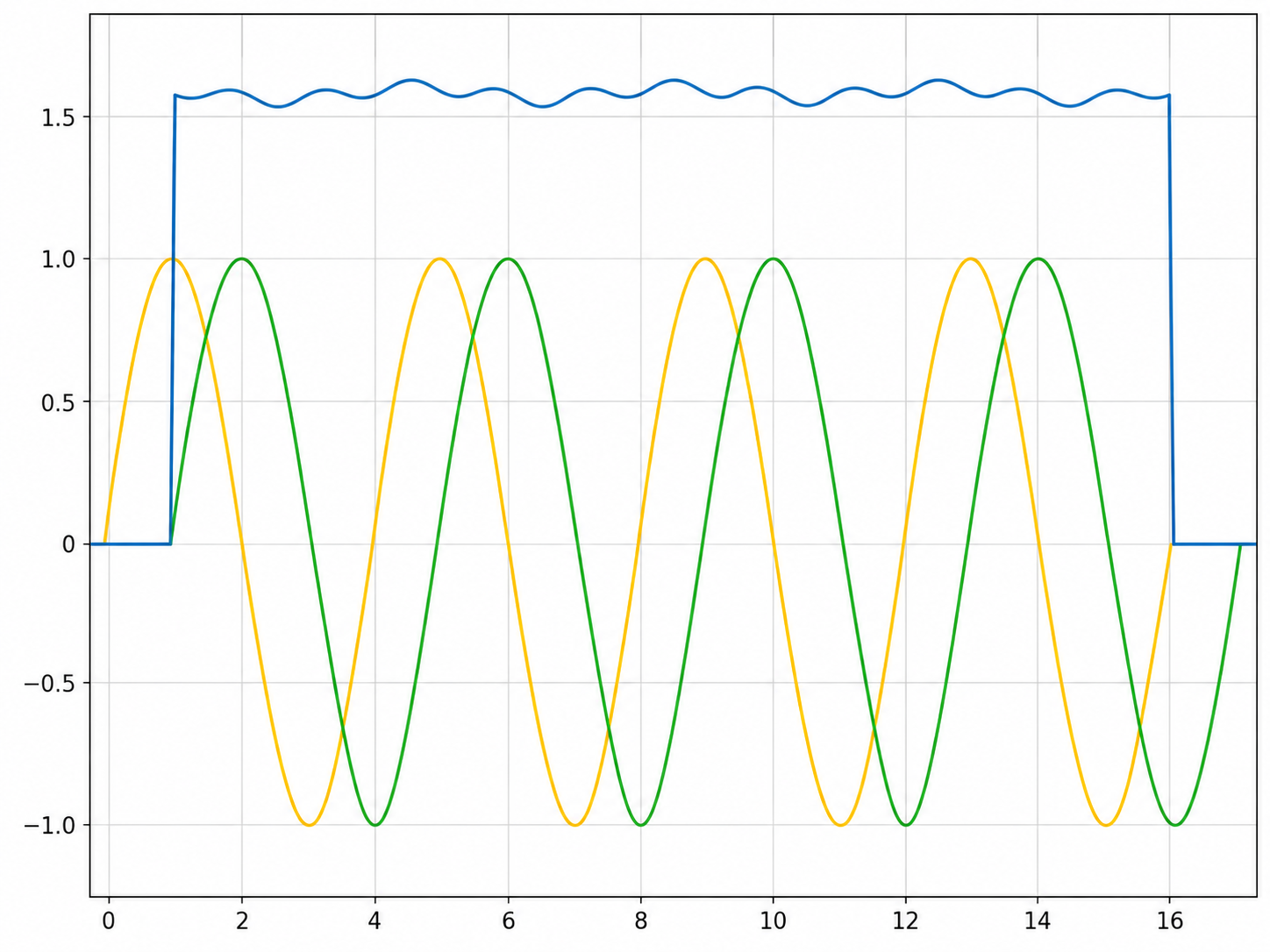} 
\caption{The output of a Python computation, for the truncated sine functions $a$
of \eqref{4b7} (in yellow), $b$ (in green), and $t^{-2}\Delta$ (in blue). Here $t=\frac{1}{20}$.
On the right, the same with a truncation over $4$ periods and $t = \frac{1}{15}$. 
} \label{f2}
\end{figure} 

Our final example is smoother, as needed, but a little more complicated. We choose a (smooth, non-negative) 
cut-off function $\theta$ symmetric with respect to $2$, such that 
\begin{equation}\label{4b8}
\theta(x) = 0 \text{ on } (-\infty, 0] \cup [4,+\infty), \ 
0 \leq \theta(x) \leq 1 \text{ on } \left[ 0, \frac{1}{4} \right] \cup \left[\frac{15}{4}, 4 \right], \ 
\theta(x) = 1 \text{ on } \left[ \frac{1}{4}, \frac{15}{4} \right],
\end{equation}
and then set 
\begin{equation}\label{4b9}
a(x) = \theta(x) \sin\left(\frac{\pi x}{2}\right)  \ \text{ and, as before } b(x) = a(x-1),
\end{equation}
hoping that he positivity of $\Delta$
(for $s=t$ small) will not suffer from that. See in Figure \ref{f3} the graphs of $a$ and $b$, computed with a $\theta$ based on a truncated polynomial (the polynomial is $35 x^4 - 84 x^5 + 70 x^6 - 20 x^7$, but this is not important: we just wanted something very smooth and increasing from $0$ to $1$ on the interval $[0, 1]$),
and two examples of $t^{-2} \Delta$ for this function. We added details of the left picture
to convince (ourselves) that $t^{-2} \Delta$ is now smoother and nothing fishy happens at the take-off 
(Figure \ref{f4}).

\begin{figure}[!h] 
  \centering 
 \includegraphics[width=8cm]{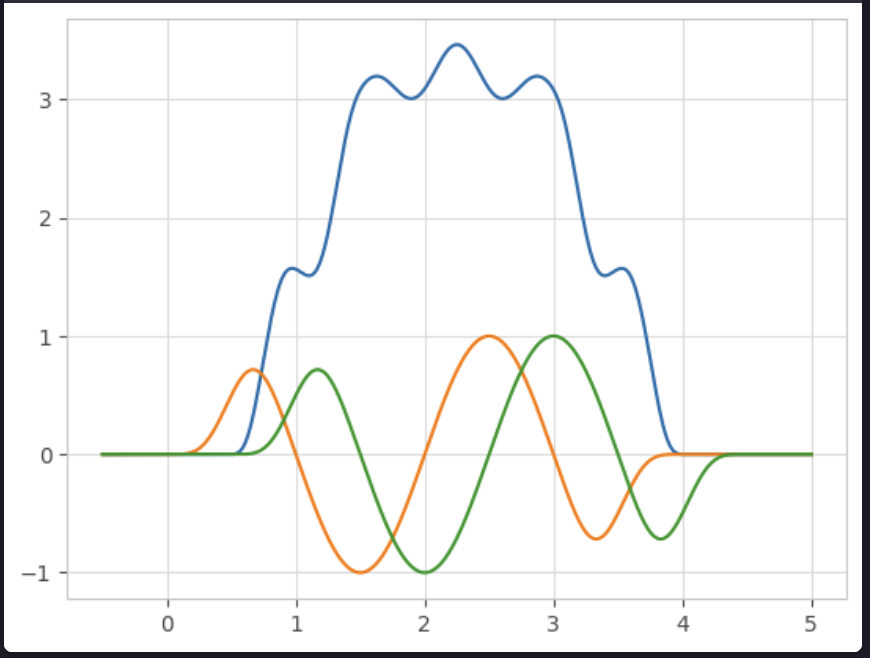} 
  \hskip2pt
  \includegraphics[width=8cm]{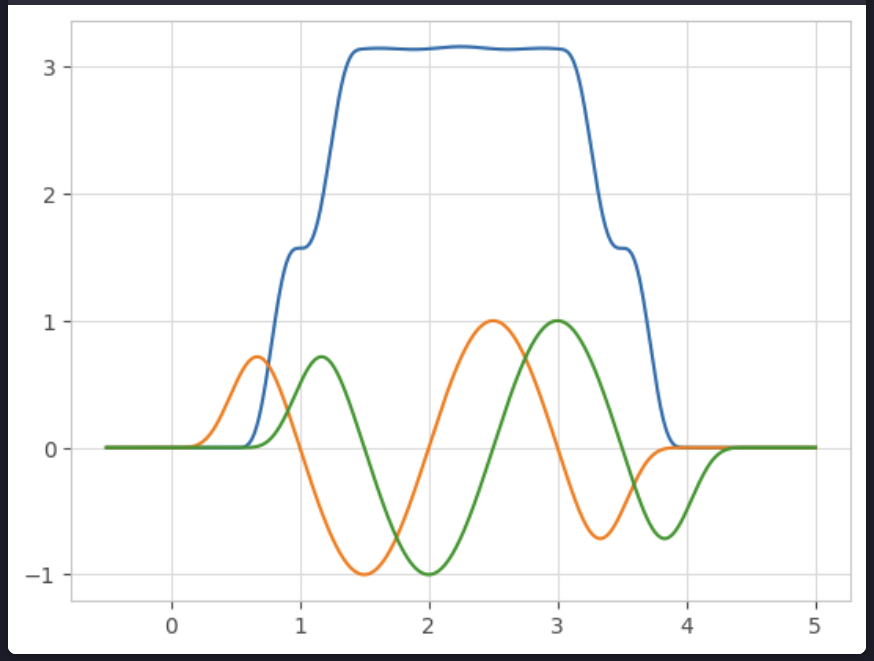} 
\caption{The output of Python computation, for the smoothly truncated sine functions $a$
of \eqref{4b9} (in yellow), $b$ (in green), and $t^{-2}\Delta$ (in blue). Here $t = \frac{1}{10}$ on the left, 
$t= \frac{1}{200}$ on the right.} \label{f3}
\end{figure} 

\begin{figure}[!h] 
  \centering 
 \includegraphics[width=6.8cm]{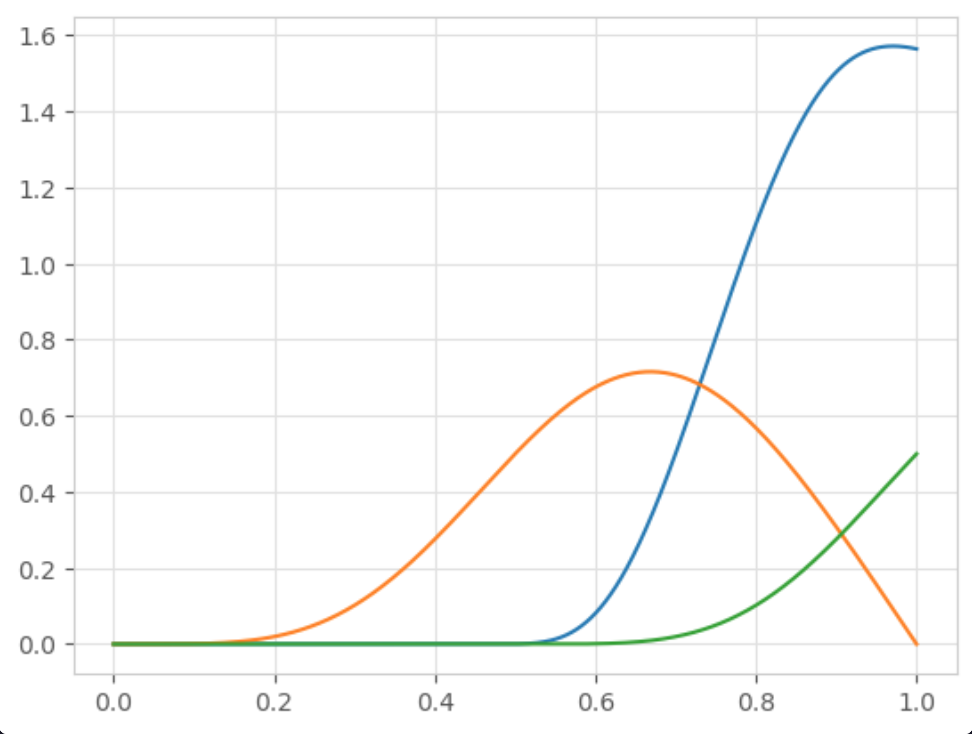} 
 \hskip5pt
  \includegraphics[width=7cm]{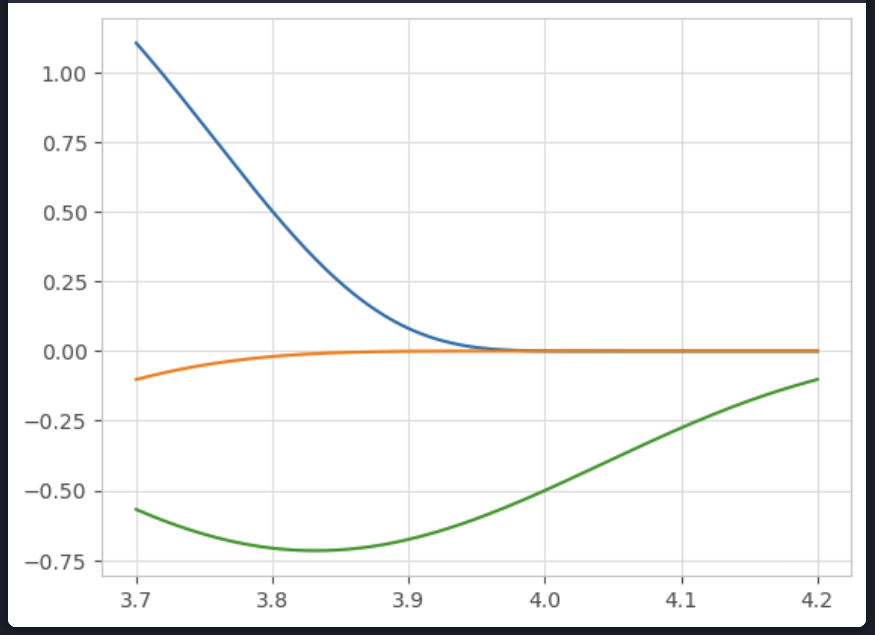} 
\caption{Details of the pictures above.} \label{f4}
\end{figure} 

\medskip

We now need more precise estimates on the sign and size of $\Delta$ in \eqref{4b5}. Before we start, we should observe that $sa$ and $tb$ are both antisymmetric with respect to the centers $2$ and $3$,
so $f$ and $g$ are easily constructible, and $R$ is in our group of interest. 
They are also as smooth as $\theta$.
We also decide that $s=t$ to simplify the computations.

We first observe that $f$ and $g$ coincide with the identity on $\R \sm [0,5]$ (because $a$ and $b$ vanish there), hence 
\begin{equation}\label{4b10}
\Delta = 0 \ \text{ on } \R \sm [0,5].
\end{equation}
So we just consider $[0,5]$ and try Taylor expansions. Write $f_t$ for $f$ and $g_t$ for $g$.
We are interested in 
\begin{equation}\label{4b11}
g_t(f_t(x)) = f_t(x) + t b(f_t(x)) = x + t a(x) + t b(x + t a(x)),
\end{equation}
\begin{equation}\label{4b12}
f_t(g_t(x)) = g_t(x) + t a(g_t(x)) = x + t b(x) + t  a(x + t b(x)),
\end{equation}
and then the difference
\begin{equation}\label{4b13}
\Delta_t(x) = g_t(f_t(x)) - f_t(g_t(x))
= t a(x) - t a(x+tb(x)) - t b(x) + t b(x + t a(x)). 
\end{equation}
Next we compute first derivatives 
\begin{equation}\label{4b14}
\partial_t (g_t \circ f_t(x)) = a(x) + b(x+t a(x)) + t   a(x) b'(x + t a(x)),
\end{equation}
and, by symmetry,
\begin{equation}\label{4b15}
\partial_t (f_t \circ g_t(x)) = b(x) + a(x+t b(x)) + t   b(x) a'(x + t b(x)).
\end{equation}
We take the difference and get that 
\begin{eqnarray}\label{4b16}
\partial_t (\Delta_t(x)) &=&  a(x) + b(x+t b(x)) + t   a(x) b'(x + t a(x)) 
\nn\\
&\,& - b(x) - a(x+t b(x)) - t  b(x) a'(x + t b(x)).
\end{eqnarray}

When $t=0$, we both get that $\Delta_t(x)=0$ and 
$\partial_t (\Delta_t(x))= t ab'-tba' = 0$ (as wanted). So $\Delta_t$ is of order $2$. 
More precisely, pick integers $r$ and $j$ (as large as we want, this will not cost us anything), and choose
$\theta$ of class $C^{r+j}$. Then, as 
as a function of $t$, $\Delta_t$  is a $C^r$ function, with values in the  class $C^{j}(I)$.

We divide by $t^2$, use the fact that $\Delta_t$ is of order $2$ in $C^j$, and get that 
\begin{equation}\label{4b17}
t^{-2} \Delta_t \ \text{ is a $C^r$ function of $t$, with values in the  class $C^{j}(I)$.}
\end{equation}
We prove below that 
$t^{-2} \Delta_t$ is also (uniformly) large in many interesting places, and even nonnegative everywhere. 

Taking one more derivative, we get
\begin{equation}\label{4b18}
\partial^2_{tt} (g_t \circ f_t(x)) =  a(x) b'(x+t a(x))  + a(x) b'(x + t a(x)) + t^2 a(x)^2 b''(x + t a(x)),
\end{equation}
compute this at $t=0$, which gives $2a(x) b'(x)$, and subtract the symmetric formula to get that
\begin{equation}\label{4b19}
F(x) : = \partial^2_{tt} \Delta_t(x)_{\vert t=0} = 2a(x) b'(x) -2 b(x) a'(x).
\end{equation}
We discuss the sign and size of this depending on the regions; the point will be that $F$ resembles a bump function (that does not depend on $t$). We start with good news.
On the interval $\left[\frac{5}{4}, \frac{15}{4}\right]$, $\theta(x) = \theta(x-1) = 1$, so 
$$
a(x) = \sin\left(\frac{\pi x}{2}\right) \ \text{ and } \ b(x) = \sin\left(\frac{\pi (x - 1)}{2}\right) = - \cos\left(\frac{\pi x}{2}\right),
$$
hence $a'(x) =\frac{\pi}{2} \cos\left(\frac{\pi x}{2}\right)$ and $b'(x) =  \frac{\pi}{2} \sin\left(\frac{\pi x}{2}\right)$, so
\begin{equation}\label{4b20}
F(x) 
= 2a(x) b'(x) -2 b(x) a'(x) = \pi  \ \text{ on } \left[\frac{5}{4}, \frac{15}{4}\right].
\end{equation}
In general, though, we only have
\begin{equation}\label{4b21}
a(x) = \theta(x)\sin\left(\frac{\pi x}{2}\right) \ \text{ and } \ b(x) = - \theta(x-1) \cos\left(\frac{\pi x}{2}\right),
\end{equation}
so we'll need to account for the variations of $\theta$ too. 
But we claim that 
\begin{equation}\label{4b22}
F(x) = 0 \ \text{ on } [0,1] \cup [4,5].
\end{equation}
Indeed, on the first interval, $\theta(x-1) = \theta'(x-1) = 0$ so $b = b' = 0$, while on the second one
$\theta(x)=\theta'(x)=0$ so $a = a' =0$. By \eqref{4b13}, we also have that $\Delta_t = 0$ 
on the slightly smaller intervals $[0,1-t]$ and $[4+t,5]$, where we shrink the intervals because in 
some terms we compute $a$ or $b$ at points $x+a(x)t$ or $x+b(x)t$. So some activity may exist a tiny bit before $x=1$ and after $x=4$.

We are left with the two $\left[1, \frac{5}{4}\right]$ and $\left[\frac{15}{4}, 4 \right]$. On the first interval, \eqref{4b21} yields
\begin{equation}\label{4b23}
a(x) = \sin\left(\frac{\pi x}{2}\right) \ \text{ and } \ b(x) = - \theta(x-1) \cos\left(\frac{\pi x}{2}\right),
\end{equation}
and so 
\begin{eqnarray}\label{4b24}
F(x) &=& 2a(x) b'(x) -2 b(x) a'(x)
\nn\\
&=& \sin\left(\frac{\pi x}{2}\right)\left[-2\theta'(x-1) \cos\left(\frac{\pi x}{2}\right) + \pi \theta(x-1) \sin\left(\frac{\pi x}{2}\right)\right]
+ \pi \theta(x-1) \cos^2\left(\frac{\pi x}{2}\right)
\nn\\
&=& \pi\theta(x-1) - 2\theta'(x-1) \cos\left(\frac{\pi x}{2}\right)\sin\left(\frac{\pi x}{2}\right)
= \pi \theta(x-1) - \theta'(x-1) \sin(\pi x).
\end{eqnarray}
We are not surprised (by \eqref{4b22}) that this vanishes at $1$ (and on its left). On the right,
$\theta(x-1)$ and $\theta'(x-1)$ are nonnegative, while  $\pi x$ is just bigger than $\pi$, so 
$\sin(\pi x)$ is negative and the second term is positive. Therefore we have
\begin{equation}\label{4b25}
F(x) \geq  \pi \theta(x-1) \geq 0 \ \text{ on }  \left[1, \frac{5}{4} \right].
\end{equation}

On the remaining interval $\left[\frac{15}{4}, 4\right]$,  \eqref{4b21} yields
\begin{equation}\label{4b26}
a(x) = \theta(x)\sin\left(\frac{\pi x}{2}\right) \ \text{ and } \ b(x) = -  \cos\left(\frac{\pi x}{2}\right),
\end{equation}
so 
\begin{eqnarray}\label{4b27}
F(x) &=& 2a(x) b'(x) -2 b(x) a'(x)
\nn\\
&=& \pi \theta(x) \sin^2\left(\frac{\pi x}{2}\right)
+ \cos\left(\frac{\pi x}{2}\right) \left[\pi \theta(x) \cos\left(\frac{\pi x}{2}\right) + 2\theta'(x) \sin\left(\frac{\pi x}{2}\right)\right]
\nn\\
&=& \pi\theta(x) + 2\theta'(x) \cos\left(\frac{\pi x}{2}\right)\sin\left(\frac{\pi x}{2}\right)
= \pi \theta(x) + \theta'(x) \sin(\pi x).
\end{eqnarray}
Now we care about $x$ slightly smaller than $4$, so $\pi x$ is slightly smaller than $4 \pi$,
the sine is negative, but so the derivative $\theta'(x)$ is. Therefore, again, the two terms have the same sign (positive) and 
\begin{equation}\label{4b28}
F(x) \geq  \pi \theta(x-1) \geq 0 \ \text{ on } \left[\frac{15}{4}, 4\right].
\end{equation}
So $F$ is a nice enough nonnegative bump function. What will really matter here is that,
by the Taylor formula,
\begin{equation}\label{4b29}
\Delta_t(x) = \frac{t^2}{2} F(x) + O(t^3),
\end{equation}
with a uniform $O(t^3)$ and a nonnegative bump function $F$ which is 
equal to $\pi$ on the central interval $\left[\frac{5}{4}, \frac{15}{4}\right]$. 

\ms
Let us also check that 
\begin{equation}\label{4b30}
\Delta_t(x) \geq 0  \text{ for all $x$, and $t>0$ small enough,}
\end{equation}
i.e., that there is no strange oscillation in the places where $F(x)$ is very small.
(Here we take $t > 0$ so that we don't have to distinguish more cases; we did not try to check $t < 0$.)

We know that $\Delta_t(x) = 0$  in the intervals $(-\infty, 1-t]$ and $[4+t, +\infty)$ (see \eqref{4b10} and the comment below \eqref{4b22}), on the middle interval
$\left[\frac{5}{4}, \frac{15}{4}\right]$, $F$ is sufficiently large, so \eqref{4b30} follows from \eqref{4b29}.
We are left with two small intervals near $1$ and $4$, where one of the two functions
$\theta(x), \theta(x-1)$ is $1$, and the other one may be small and vary.

We use \eqref{4b13} and the fundamental theorem of calculus to write
\begin{equation}\label{4b31}
\Delta_t(x)  = t a(x) - t a(x+tb(x)) - t b(x) + t b(x + t a(x)) = t^2(A+B)
\end{equation}
where 
\begin{equation}\label{4b32}
A = [a(x)-a(x+tb(x))]/t =  -a'(\xi) b(x)
\end{equation}
for some $\xi$ between $x$ and $x+tb(x)$ and 
\begin{equation}\label{4b33}
B = [b(x + t a(x)) - b(x)]/t = a(x) b'(\eta) 
\end{equation}
for some $\eta$ between $x$ and $x+ta(x)$.

We first  investigate near $1$, where $a$ and $b$ are given by \eqref{4b23} (even on the left of $1$),
so 
$$
A = -a'(\xi) b(x) =  \frac{\pi}{2} \cos\left(\frac{\pi \xi}{2}\right) \theta(x-1) \cos\left(\frac{\pi x}{2}\right)
$$
 and 
$$
B = a(x) b'(\eta) = \sin\left(\frac{\pi x}{2}\right) \left[\frac{\pi}{2} \theta(\eta-1) \sin\left(\frac{\pi \eta}{2}\right) -  \theta'(\eta-1) \cos\left(\frac{\pi \eta}{2}\right)\right].
$$
If $A \neq 0$, then $\theta(x-1) > 0$, then $x > 1$ and also $x + t a(x) \geq x > 1$
(recall that $a(x) = \sin\left(\frac{\pi x}{2}\right) \sim 1$), so both cosines are negative and $A \geq 0$.
As for $B$, if $B \neq 0$ then $\eta > 1$ and the term with $\theta'(\eta-1)$ is non-negative.
Also, the two sines are near $1$ and the first term is non-negative too.

Near $4$, $a$ and $b$ are given by \eqref{4b26}, so 
$$
A = -a'(\xi) b(x) = \left[\theta'(\xi) \sin\left(\frac{\pi \xi}{2}\right)  + \frac{\pi}{2} \theta(\xi) \cos\left(\frac{\pi \xi}{2}\right)\right] \cos\left(\frac{\pi x}{2}\right),
$$
$$
B= a(x) b'(\eta) = \frac{\pi}{2} \theta(x) \sin\left(\frac{\pi x}{2}\right) \sin\left(\frac{\pi \eta}{2}\right) .
$$
If $A \neq 0$, then $\xi < 4$, $\theta'(\xi) \leq 0$, $\sin\left(\frac{\pi \xi}{2}\right) \leq 0$
(because $\frac{\pi \xi}{2}$ is a little smaller than $2\pi$), so 
$\theta'(\xi) \sin\left(\frac{\pi \xi}{2}\right) + (\pi/2) \theta(\xi) \cos\left(\frac{\pi \xi}{2}\right) \geq 0$.
In addition, $\frac{\pi \xi}{2}$ lies close to $2\pi$, so $\cos\left(\frac{\pi x}{2}\right) \geq 0$ as well, and $A \geq 0$.
If $B \neq 0$, then $\theta(x) > 0$, hence $x < 4$, $a(x) = \theta(x) \sin(\pi x/2) \leq 0$,
$\eta \leq x \leq 4$, the two sine functions are negative (or $0$), and hence $B \geq 0$.
This completes our verification of \eqref{4b30}.

\medskip

Let us finally write down what this means for our commutator $R = R_t$ from
\eqref{4b3}. Since $R-I = g^{-1} \circ f^{-1}  \circ \Delta_t$, we get that for $t>0$ small enough,
\begin{equation}\label{4b34}
x \leq R(x) \leq x + C t^2  \ \text{ for } x \in \R
\end{equation}
and that on the central interval
\begin{equation}\label{4b35}
R_t(x) \geq x + ct^2   
\ \text{ on }  \Big[1 + \frac{3}{10}, 4 - \frac{3}{10} \Big],
\end{equation}
by \eqref{4b20} and \eqref{4b29}, where we removed a bit of the interval 
because $g^{-1} \circ f^{-1}$ may move points. If we wanted $R_t$ to move points in the other direction,
we could take the inverse, i.e., the other commutator $f^{-1} \circ g^{-1} \circ f \circ g $.

Also, if we set $\psi_t = R_t -I$ as usual, it is important to know that 
\begin{equation}\label{4b36}
t^{-2}\psi_t \text{ is a $C^r$ function of $t$, with values in the class $C^j(I)$,}
\end{equation}
by the proof of \eqref{4b17} (i.e., the fact that it stems from a $C^r$ function, Taylor of order $2$,
and the fact that $\psi_t$ and its $t$-derivative vanish at $0$).

\section{ Wavelets }
\label{S6}

As was explained before, our main step in targeting $f_\psi$ by constructible mappings with a small error will be the approximation of a given $\psi$ by a finite combination of wavelets. We have to choose this combination and the wavelet base very carefully however, due to the aforementioned issue with targeting something of non-zero average: we can do this, but this operation will not be cheap, so we need to avoid doing that often.

For a general background on wavelets and to be at ease with the discussion below, one can look to \cite{Da} and its references, or \cite{KL}, page 452 and further, for a discussion of the flexibility and advantages of bi-orthogonal wavelets. For a brief but concise overview aimed at a reader who likes Gabor frames more than wavelets, one can address to Chapter 10 of \cite{Gr}. We will also refer to Mallat's book \cite{Ma} in what follows.

Ideally, we would like to use antisymmetric wavelets, because of the special form of our antisymmetric
functions of the $\Phi_0(I)$ space. 
We would also like compactly supported wavelets, to keep all the computations (analysis and synthesis) in our initial interval $I$. 
Unfortunately, there is no orthogonal wavelet basis composed of antisymmetric wavelets, if we exclude the Haar system
which is not smooth enough for us. So the best solution for us seems to be compactly supported bi-orthogonal frames of wavelets, such as the Lemari\'e-Battle, \cite{B1}, \cite{B2} and \cite{LR},
or Cohen-Daubechies-Feauveau \cite{CDF} wavelets. Having a dual basis different from the original one is slightly scary, 
but we don't really care if we use one set of functions for analysing $\psi$, and a different one for reconstructing it.

\medskip

We now give the notation (which is pretty standard) and list the properties of the wavelets base we use, to keep things self-contained. Let us keep to the convention that we use Cohen-Daubechies-Feauveau wavelets, and though the properties we need are a little bit difficult to localize in the original paper \cite{CDF}, one can refer to Section $7.4$ in Chapter $7$ of \cite{Ma}, where these are nicely presented. 

Our wavelets will be based on a multiscale resolution, which in particular means that there is a (closed) space
$V_0 \subset L^2$ at scale $1$ (chosen carefully), with a basis $\{ \varphi_i \}, i \in I_0$, and that for functions $u \in V_0$ 
there is a decomposition
\begin{equation} \label{5a1}
u = \sum_{i \in I_0} \lambda_i \varphi_i,
\end{equation}
where the coefficients $\lambda_i$ can be computed as
\begin{equation}\label{5a2}
\lambda_i = \lambda_i(u) = \int u(x) \ol\varphi_i dx.
\end{equation} 
For traditional orthogonal wavelets with complex coefficients, $\ol\varphi_i$ would be the complex conjugate
of $\varphi$, but here we will use real-valued biorthogonal wavelets, and the $\ol\varphi_i$ 
are just another set of functions (the dual basis). Our two bases are obtained by translations from 
``scaling functions'' $\varphi$ and $\ol\varphi$, in the sense that 
for each $i \in I_0$, there is a point $x_i$ in a grid such that 
\begin{equation} \label{5a3}
\varphi_i(x) = \varphi(x-x_i) \ \text{ and } \ \ol\varphi_i(x) = \ol\varphi(x-x_i).
\end{equation}
In addition, it is traditional that the $x_i$ are the points of $\bZ$, but for us it will be more convenient to work at 
a scale $\tau < 1$ and use the points $x_i \in \tau \bZ$. 
This amounts to conjugating everything by a dilation, or starting from a space 
$V_{k_0}$ with a positive $k_0$ in the usual construction. We shall choose $\tau$ soon, depending on our wavelet basis. We can ask for our scaling function $\varphi$ to be smooth enough and compactly supported, so that in particular
\begin{equation} \label{5a4}
\varphi \in C^k(\R) \ \text{ and is supported in } [-K\tau, K\tau],
\end{equation}
where $K$ depends only on $k$ and our choice of wavelets. We are also allowed to require that
\begin{equation} \label{5a5}
\ol\varphi  \text{ lies in some $C^\beta$, $\beta>0$, and is supported in } [-K\tau, K\tau].
\end{equation}
Now it is part of life that apparently $\ol\varphi$ is not as smooth as $\varphi$, which is a little disturbing, 
but this will not matter much. And we choose our initial scale $\tau$ such that 
\begin{equation} \label{5a6}
\tau K \leq 10^{-2}
\end{equation}
(to be completely safe). Recall from \eqref{2e8} that our target $\psi_1$ after the curtain trick is supported on $\left[\frac{1}{30}, 1 - \frac{1}{30}\right]$,
so when we decompose it, we may as well restrict to the set $I_0$ if indices $i$ such that 
$x_i \in \left[\frac{1}{50}, 1 - \frac{1}{50}\right]$,
and then the pieces will still be supported in $[10^{-2}, 1-10^{-2}]$. 

To wrap up the description of the top layer of the wavelet decomposition, let us remark that we are allowed to choose an even function $\varphi$ with all the smoothness and localisation properties above, see pp 312-313 of \cite{Ma} 
(this does not seem to help us though).

In our multiscale resolution, there are also further spaces 
$V_j = \big\{ u \in L^2 \, ; \, u(2^j \cdot) \in V_0 \big\}$ at smaller scales, and the spaces $W_j$, the orthogonal 
complements of $V_{j}$ in $V_{j+1}$. Bases of $W_j$ are generated by a ``mother function'' $\theta$, 
while some dual function $\ol \theta$ generates our dual bases. 
They enjoy the following properties. As before, 
\begin{equation} \label{5a7}
\theta\in C^{k+1}(\R), \ol\theta \in L^\infty(\R), \ \text{ both are supported in } [-K\tau, K\tau]
\end{equation}
(we added a derivative because we will need it later), and we can ask that (see again pp 312-313 of \cite{Ma})
\begin{equation} \label{5a8}
\theta \text{ is odd.} 
\end{equation}
Concerning $\ol\theta$, what replaces the missing smoothness and turns out to be the right notion here, is the vanishing moments condition
\begin{equation} \label{5a9}
\int \ol\theta(x) x^m dx= 0  \text{ for } 0 \leq m \leq k.
\end{equation}
So, the bases of $W_j$ are denoted by 
$\{ \theta_i \}$, and the dual ones by $\{ \ol \theta_i \}$, and we have
\begin{equation} \label{5a10}
\theta_i(x) = 2^{j/2}\theta(2^{j}(x-x_i)) \ \text{ and } \ \ol\theta_i(x) = 2^{j/2} \ol\theta(2^{j}(x-x_i)),
\end{equation}
where we normalized in $L^2$, and the centers $x_j$ lie in the set $I(j) = \tau 2^{-j} \bZ$.
In fact, we only take the $x_i$ such that $\theta_i$ and $\ol\theta_i$ are supported well inside $[0,1]$, 
for the same reason as above. 

Call $P_j$ and $Q_j$ the 
projections on $V_j$ and $W_j$ respectively for $j \ge 0$. For general $u \in L^2$, we can still compute the coefficients $\lambda_i$ as above, and the
right-hand side of \eqref{5a1} is now $P_0(u)$ (rather than $u$ itself). Concerning the $\theta_i$ functions,  
we have that
\begin{equation} \label{5a11}
Q_j(u) = \sum_{i \in I(j)} \alpha_i \theta_i,
\end{equation}
with 
\begin{equation} \label{5a12}
\alpha_i = \int u \ol \theta_i,
\end{equation}
and altogether
\begin{equation} \label{5a13}
u = P_0(u) + \sum_{j \geq 0} Q_j(u) 
= \sum_{i \in I_0} \lambda_i \varphi_i + \sum_{j \geq 0} \sum_{i \in I(j)} \alpha_i \theta_i.
\end{equation}
Usually, we have convergence in $L^2$, but for our functions, we have uniform convergence, and even better, 
as we will see now.

\medskip

We are interested in functions $u \in C^k$, with compact support in $[0,1]$, and we want to estimate the size of $\alpha_i$ for some $i \in I(j)$, in terms of $N_k(u)$. 
Recall that $\ol\theta_i$ is supported on an interval $L_i$ 
of length $2^{-j} K \tau$, and on this interval there is a
polynomial $p$, of degree at most $k$, such that $|u-p| \leq C N_k(u) 2^{-kj}$ 
(this is called Jackson's theorem;  we almost proved it for \eqref{2e3}). 
Then
\begin{equation} \label{5a14}
|\alpha_i| = \Big|\int_{L_i}  u \ol \theta_i \Big|  = \Big|\int_{L_i}  (u-p) \ol\theta_i \Big| 
\leq C N_k(u) 2^{-j k} |L_i| ||\theta_i||_\infty \leq C N_k(u) 2^{-j/2} 2^{-kj},
\end{equation}
by the vanishing moment condition and the normalization \eqref{5a10}. 
Since the sum in \eqref{5a11} is locally finite (with bounds),
we deduce from \eqref{5a10} that 
\begin{equation} \label{5a15}
\Big| Q_j(u) \Big| \leq C N_k(u) 2^{-kj}
\end{equation}
and even more, using the regularity of $\theta$, that 
\begin{equation} \label{5a16}
N_k(Q_j(u)) \leq C N_k(u).
\end{equation}
For the initial level, the same computations, but simpler, yield
$\lambda_i \leq C N_0(u)$, and then
\begin{equation} \label{5a17}
N_k(P_0(u)) \leq C N_0(u)
\end{equation}
since the $\varphi_i$ are smooth enough. Finally, for a more general $P_j$,
\begin{equation} \label{5a18}
N_k(P_{j+1}(u)) \leq N_k(P_0) + \sum_{\ell \leq j} N_k(Q_\ell(u)) \leq C N_0(u) + C j N_k(u).
\end{equation}
This is slightly diverging, but still under control.

\ms

We intend to apply all this to $u = \psi_1$, where $\psi_1$ is our first renewed target from Section \ref{S3}.
In fact, we will regroup our sum $\sum_i \alpha_i \theta_i$ in more terms than just the $Q_j$, because for each 
$j$ we will cut the set $I(j)$ into less than $C$ sets $I(j, \ell)$ where all the $\theta_i$ are supported in disjoint intervals, so that
the corresponding sum lies in the larger class $\Psi([0,1])$ of constructible mappings defined near \eqref{1a13}.
The same proof as above still gives
\begin{equation} \label{5a19}
\Big|  \sum_{i \in I(j,\ell)} \alpha_i \theta_j \Big| \leq C N_k(u) 2^{-kj}
\end{equation}
as in \eqref{5a15}, and 
\begin{equation} \label{5a20}
N_k\left(\sum_{i \in I(j,\ell)} \alpha_i \theta_j \right) \leq C N_k(u).
\end{equation}
Using wavelets with one more derivative, we can even have
\begin{equation} \label{5a21}
N_{k+1}\left(\sum_{i \in I(j,\ell)} \alpha_i \theta_j \right) \leq C  2^j N_k(u)
\end{equation}
(an estimate that we intend to use with parsimony, only when it is compensated by some smaller term).
Also, when we sum \eqref{5a15} for the rest of the series, we get
\begin{equation} \label{5a22}
|u - P_{j+1}(u)| \leq \sum_{\ell > j} |Q_j(u)| \leq C N_k(u) 2^{-kj}.
\end{equation}
We will typically stop the series for a $j$ large enough (called $N$ later) for this to be very small.

\ms

\begin{rem}
Let us say a few words of how we could deal with Meyer wavelets. First, we would decompose $\psi'$ rather than $\psi$,
into symmetric wavelets. Then we would actually truncate both the coefficients and the wavelets, so that the coefficients $\alpha_i$
are still computed by integrating over a small interval, and the wavelets $\theta_i$ are truncated to be compactly supported. This 
would not disturb the estimates on the sums (we could even truncate our wavelets so that they have $k$ vanishing moments too),
and the price to pay is that we compute $Q_j$ with an additional error, which we can make small by truncating $\theta$ only on a large enough
ball. We prefer not to do this; thus, we use pre-arranged compactly supported wavelets. 

Also, note that we do not really need algebraic formulas for the wavelet decomposition, 
just a decomposition of some projection $P_0(u)$ on some space of co-dimension at most $C$,
into compactly supported antisymmetric normalized functions, with slow divergence of the $N_k$ norm, 
and fast convergence in the sup norm.
Things like truncated Littlewood-Paley decompositions (or the Calder\'on formula) should work as well.
\end{rem}

\section{Projection on $V_0$}
\label{S7}

Life would be easier if our first revised target $\psi_1$ was directly a sum of odd wavelets, but it has a projection
on $V_0$ which needs to be taken care of. The goal for this section is to replace $\psi_1$ with a new target, say, 
$\psi_2$, which is directly orthogonal to $V_0$, or almost, just as we would have liked for $\psi_1$. 
To achieve that, we will compose with a constructible mapping built from the commutators in Section \ref{S5}.

In other words, we look for a mapping $\psi = \psi_{proj}$ such that 
for the new target $\psi_2$, satisfying $f_{\psi_1} = f_{\psi_2} \circ f_{\psi_{proj}}$, 
or equivalently $\psi_2 = f_{\psi_1} \circ f_{\psi_{proj}}^{-1} - I$,
we have the following property. For  each $i \in I_0$ (the set of indices for the bases $\{ \varphi_i \}$
and $\{ \ol\varphi_i \}$, where we removed the useless ones that are supported too close to the complement of $I$),
\begin{equation}\label{6a1}
\lambda_i(\psi_2) : = \int \psi_2(x) \ol\varphi_i(x) dx = 0.  
\end{equation}
In fact, we will not exactly get this, but only make sure that $\lambda_i(\psi_2)$ is sufficiently small 
and show that
\begin{equation} \label{6a2}
   \lambda_i(\psi_{proj}) = \lambda_i(\psi_1) + O(N_0(\psi_1)^{3/2}).
\end{equation}
Since the symbol $\psi_2$ for $f_{\psi_1} \circ f_{\psi_{proj}}^{-1}$ is close to $\psi_1 -  \psi_{proj}$,
with \eqref{6a2} we will not be far from \eqref{6a1}.

\ms
We now construct our mappings $\psi = \psi_{proj}$ and $\psi_2$. 
We will continue 
to denote by $\wt \psi$ the function such that $I + \wt \psi = f_\psi^{-1}$
(i.e., the inverse of $\psi$ for $\ast$), and since 
\begin{equation}\label{6a2a}
\psi_2(x) = \psi_1 \ast \wt \psi(x) = \wt\psi(x) + \psi_1(x+\wt \psi(x)) 
\end{equation}
by \eqref{3e2}, \eqref{6a1} would amount 
to making sure that for $i \in I_0$,
\begin{equation}\label{6a3}
\int \big[\wt\psi(x) + \psi_1(f_\psi^{-1}(x)) \big] \ol\varphi_i(x) dx = 0.
\end{equation}
However, this will be difficult to solve, so we try instead to solve the simpler equations
\begin{equation}\label{6a4}
\int \wt\psi(x) \ol\varphi_i(x) dx = b_i \ \text{ with }
b_i = - \int \psi_1(x) \ol\varphi_i(x) dx
\end{equation}
where we make things linear by forgetting the composition with $f_{\psi}^{-1}$.
Let us discretize, and cut $[0,1]$ into $n+2$ intervals of length
$\ell = (n+2)^{-1}$. We remove the first and last intervals, because (if $n$ is large enough)
they will not meet the support of any of the wavelets $\varphi_i, \ol\varphi_i$, $i \in I_0$
that we use, and we are left with intervals $L_j$, $1 \leq j \leq n$; 
we don't care if there are still too many. We will soon choose $n$
large enough for some matrix to be invertible, but not too large.

We have a model for the function $\wt\psi$ that we intend to choose. We will choose the restriction
of $\wt\psi$ on each $L_j$, always with the method suggested in Section \ref{S5}, but
modulo the fact that we conjugate by an affine mapping that sends $L_j$ to the 
interval $[0,5]$.  
In each $L_j$, we choose a parameter $t = t_j$, and apply
the construction of Section \ref{S5} to find functions $f_t$, $g_t$, $R_t$, etc. When $t > 0$, we
use the same conventions as in that section, 
and when $t < 0$, let us use the convention that 
we exchange $f$ and $g$ (or equivalently replace $R$ by its inverse), with the effect of replacing $\Delta_t$
by $-\Delta_t$. With these conventions, \eqref{4b29} becomes
\begin{equation}\label{6a5}
\Delta_t(x) = \frac{t^2}{2} \, {\rm sign}(t) \, F(x) + O(t^3),
\end{equation}
where $F$ (which we shall also call $F_j$ because it now 
depends on $L_j$ through a translation)
is the function of \eqref{4b19}. Thus the $F_j$ have a fixed size and shape, and we will end up choosing
the $t_j$ small. Now we are really interested in taking 
\begin{equation}\label{6a6}
\wt \psi = R_t - I \text{ on } L_j,
\end{equation}
which we define, interval by interval, with the value $t_j$, and also with the convention
that we replace $R_t$ by its inverse when we switch to $t < 0$. 
With this convention, composing with
$f_t^{-1}$ and $g_t^{-1}$ as above \eqref{4b34} transforms \eqref{6a5} into 
\begin{equation}\label{6a7}
\wt \psi(x) = \frac{t_j^2}{2} \, {\rm sign}(t_j) \, F_j(x) + O(t_j^3)
\ \text{ on } L_j . 
\end{equation}
Note that on the right-hand side the asymptotics in $t_j$ stands since it is preserved by the compositions 
with $f_{t_j}^{-1}$ and $g_{t_j}^{-1}$. As usual, one of the main points of choosing these functions is 
that $\wt\psi$ is constructible. 
Set $\mu_j = \frac{t_j^2}{2} \, {\rm sign}(t_j)$; 
we want to use linear algebra to choose the $\mu_j$.
The left-hand side of \eqref{6a4} is
\begin{equation}\label{6a8}
\int \wt\psi(x) \ol\varphi_i(x) dx = \sum_j \mu_j \int_{L_j} F_j \ol\varphi_i + \cE,
\end{equation}
where the error $\cE$ is less than $C \sup_j |t_j|^{3} = C \sup_j |\mu_j|^{3/2}$.
We intend to forget about $\cE$, and solve the equation $AM = B$, where 
$B = \{ b_i \}$ is a column matrix of size $p := \sharp I_0$, 
$M$ is the column matrix of size $n$ composed of the $\mu_j$,
and $A$ is the $p \times n$ matrix with coefficients $a_{i,j} = \int_{L_j} F_j \ol\varphi_i$.
Thus $A$ is bounded, with bounds that depend only on $n$ and our choice of basis.

If we want to solve this, we need to know that the $\ol \varphi_i$ have enough independence, and this is 
given in a quantitative way by the fact that our wavelets come from a bi-orthogonal system. That is, the definition 
of a bi-orthogonal system says that for $i, r \in I_0$, 
\begin{equation} \label{6a9}
 \int_{[0,1]} \ol\varphi_i(x) \varphi_r(x) dx = \delta_{i,r} 
\end{equation}
(the Kronecker symbol). 

Let us first guess an approximate solution, by pretending that $\ol\varphi_i$ and $\varphi_r$ are all constant on every
$L_j$. Call $y_j$ the center of $L_j$, and also set $\rho = \fint_{L_j} F_j$; this is a positive constant, by Section \ref{S5} and Definition \eqref{6a7}. In this special case, 
$a_{i,j} = \rho\ell \ol\varphi_i(y_j)$, and \eqref{6a9} says that 
\begin{equation} \label{6a10}
\ell  \sum_j \ol\varphi_i(y_j) \varphi_r(y_j) = \delta_{i,r},
\end{equation}
which we can write more concisely as $\ell  E_0 F_0 =I$, where $E_0$ is the $p \times n$ matrix with coefficients 
$\ol \varphi_i(y_j)$ and $F_0$ is the $n \times p$ matrix with coefficients $\varphi_r(y_j)$. In addition,
in this special case $A = \rho \ell E_0$. So we want to solve $\rho \ell E_0 M = B$, 
we try with $M = \lambda F_0 B$, notice that we then want $\lambda\rho \ell E_0 F_0 B = B$, take $\lambda = \rho^{-1}$,
and then obtain a solution immediately.

In the general case, we can still try to find a solution $M = \rho^{-1} F_0 Y$, and see what 
our equation $AM = B$, or equivalently $\rho^{-1} AF_0 Y=B$ becomes. This is still a linear equation (with $p$ equations
and $p$ variables), whose matrix $\rho^{-1} A F_0$ has a more complicated structure, and obviously
it will help to estimate the norm of $S = \rho^{-1} A F_0 - I$. The coefficients are 
\begin{equation} \label{6a11}
\gamma_{i,r} = -\delta_{i,r} + \rho^{-1} \sum_j a_{i,j} \varphi_r(y_j)
= - \delta_{i,r} + \rho^{-1} \sum_j \varphi_r(y_j) \int_{L_j} F_j(x) \ol\varphi_i(x)dx .
\end{equation}
But by \eqref{6a10}, 
\begin{equation} \label{6a12}
\delta_{i,r} = \ell  \sum_j \ol\varphi_i(y_j) \varphi_r(y_j)
= \rho^{-1} \sum_j \int_{L_j} \ol\varphi_i(y_j) \varphi_r(y_j) F_j(x) dx, 
\end{equation}
so 
\begin{equation} \label{6a13}
|\rho\gamma_{i,r}| \leq \sum_j \int_{L_j} |F_j(x) \varphi_r(y_j)| \big| \ol\varphi_i(x) - \ol \varphi_i(y_j)|
\leq C ||\ol\varphi_i||_{C^\beta} \ell^{\beta} \leq C \ell^\beta,
\end{equation}
where we use the fact that our wavelets $\ol\varphi_i$ are all $C^\beta$ for some $\beta > 0$ 
(which depends on $k$, our regularity); as stated in \eqref{5a7}.
We don't need to be careful about the number $p$ of wavelets, 
because it is fixed, but we need to be careful about how our estimates above depend on 
$n$ and $\ell = (n+2)^{-1}$, to be chosen very soon.

At this point we can choose $n$ and $\ell$ so that $||| S ||| \leq 10^{-1}$, say, so that $\rho^{-1} AF_0$ is invertible,
with norm at most $2$, and the solution that we just produced to the linear version of \eqref{6a8} is 
$Y = [\rho^{-1} AF_0]^{-1} B$. That is, for the choice $M$ of coefficients $\mu_j$
(and then the ensuing choice of $\wt \psi$), we only get that the wavelet coefficients of $\wt\psi$ satisfy
\begin{equation} \label{6a14}
\int \wt\psi(x) \ol\varphi_i(x) dx = b_i + O(\mu_+^{3/2}),
\end{equation}
where 
\begin{equation} \label{6b15}
\mu_+ = \sup_{j} |\mu_j| \leq C |M| \leq C |B| \leq C N_0(\psi_1)
\end{equation}
(recall from \eqref{6a4} that $b_i$ is a wavelet coefficient of $\psi_1$).
This is not exactly \eqref{6a4}, because we have an error $O(\mu_+^{3/2})$, but this is not so bad. 
The reader no longer needs to worry about the size $n$ (which may make our function $\wt\psi$ look a little wilder), 
because $n$ was chosen above, depending only on our choice of wavelets.

Recall that we were really interested in the non-linear equation \eqref{6a3} and,
by the fundamental theorem of calculus, we make an error of size at most
\begin{equation} \label{6a15b}
\int |\psi_1(f_\psi^{-1}(x)) - \psi_1| \, |\ol\varphi_i(x)| dx \leq C N_1(\psi_1) N_0(\wt \psi) \leq C N_1(\psi_1) N_0(\psi_1).
\end{equation}
Altogether, the first wavelet coefficients of our new target $\psi_2$ are 
\begin{eqnarray} 
\Big| \int \psi_2(x) \ol\varphi_i(x) dx \Big| 
&=& \Big|\int \big[\wt\psi(x) + \psi_1(f_\psi^{-1}(x)) \big] \ol\varphi_i(x) dx \Big|
\nn\\ 
&\,& \hskip-50pt \leq \; \Big|\int \big[\wt\psi(x) + \psi_1(x) \ol\varphi_i(x) dx \Big| + C N_1(\psi_1) N_0(\psi_1)
\leq  C N_0(\psi_1)^{3/2} + C N_1(\psi_1) N_0(\psi_1)
\end{eqnarray}
(see \eqref{6a3} and above, and then \eqref{6a14} and \eqref{6b15}).
In other words, we now have that for the projection of $\psi_2$ on the space $V_0$, 
\begin{equation} \label{6a16}
N_k(P_0(\psi_2)) \leq C  N_0( P_0(\psi_2)) \leq C N_1(\psi_1) N_0(\psi_1) + C N_0(\psi_1)^{3/2} \leq C N_k(\psi_1)^{3/2},
\end{equation}
where the first inequality is \eqref{5a17}. 
Recall from \eqref{6a2a} that $\psi_2 = \psi_1 \ast \wt\psi = \psi_1 \ast \wt\psi_{proj}$
as in \eqref{3e12}, so by the first line of \eqref{3e13} and then \eqref{6a7} and \eqref{6b15} 
\begin{equation} \label{6a17}
N_k(\psi_2) \leq N_k(\psi_1) +  N_k(\wt \psi) + C N_k(\psi_1) N_k(\wt \psi)
\leq N_k(\psi_1) + C \mu_+
\leq C N_k(\psi_1) + C N_0(\psi_1) \leq C N_k(\psi_1),
\end{equation}
(recall that $\psi_1$ is supported in $I$).
So, by subtraction, for the odd wavelets part as well, 
\begin{equation} \label{6a18}
N_k(\psi_2 - P_0(\psi_2)) \leq C N_k(\psi_1).
\end{equation}
This completes the construction of our almost projected target $\psi_2$.

\ms
At this point, an important remark is due, on the norms of some of the elementary target mappings 
that we intend to use in our construction.
Observe that we have good estimates for $\wt \psi$ and our next target $\psi_2$, but worse estimates 
for the four pieces $f_t$, $g_t$, $f_t^{-1}$, and $g_t^{-1}$ that compose them, as they were obtained 
from Section \ref{S5} with a parameter $t_j$ such that $t_j^2= \pm \mu_j$. That is 
\begin{equation} \label{6b20}
N_k(f_t) + N_k(g_t) + N_k(f_t^{-1}) + N_k(g_t^{-1}) \leq C N_k(\psi_1)^{1/2},
\end{equation}
by \eqref{6b15} and the definition of $f_t$ and $g_t$.
Essentially, this is the reason why we will 
lose (almost) half of our derivatives in Theorem \ref{t1}.

\section{We remove odd wavelets and prove Theorem \ref{t1a9}} 
\label{S8}
\subsection{Removing the odd wavelets}

In this subsection we continue eliminating the coefficients in a wavelet decomposition of our target function. We will not use any estimates on those coefficients directly, though an eager reader may derive those from \eqref{7a23bis} 
 and get again a quantitative version of
$$
\alpha_i(\psi_3) = \alpha_i(\psi_2) + \overline{o}(\alpha_i(\psi_2)),
$$
calling $\psi_3$ our new target and using the notation of Section \ref{S6}. The main result of the (sub)section will be the estimate \eqref{1a17} on the $L^\infty$ norm of the difference $f - f_{\psi_3}$ 
that shows up in the conclusion of Theorem \ref{t1a9}, along with the desired estimate on $N_k(\psi_3)$.

\medskip

We start with the new target $\psi_2$ from the last section, and we add a bit of notation to better describe 
the decomposition of $\psi_2$. 
We write 
\begin{equation} \label{7a1}
\psi_2 = \eta + \xi = \eta + \xi_N + \xi_\infty,  \ \text{ with } \eta = P_0(\psi_2) 
\, \text{ and } \xi_N = P_{N+1}(\xi),
\end{equation}
where $\eta$ is small by \eqref{6a16}, the large integer $N$ will be chosen later so that $\xi_\infty$ is small enough, 
and we see $\xi_N$ as the main part of $\xi$
(and of $\psi_2$). We write
\begin{equation} \label{7a2}
\xi_N = \sum_{n=0}^N Q_n(\xi)
\end{equation}
the decomposition into pieces $Q_n(\xi)$ that contain wavelets of a given scale $n$, and further decompose each 
$Q_n(\xi)$ into a sum of at most $C$ terms $\xi_{n,j}$, $j \in I(n)$, where $\xi_{n,j}$ is a linear combination of wavelets 
$\sum_i \lambda_i\theta_i$, every $\theta_i$ in the sum is of generation $n$ and supported in some interval $J_i$,
and the $J_i$ are disjoint, and even at distances at least $2^{-n}$ from each other. 
This is why we need about $C$ families of $\theta_i$, and hence $C$ terms $j \in I(n)$ for each $n$. 
The point is roughly that 
\begin{equation} \label{7a3}
\text{each of the pieces $\xi_{n,j}$ is constructible, because it lies in $\Psi(J)$ for some $J \subset\subset[0,1]$.}
\end{equation}
The antisymmetry of our wavelets $\theta_i$ is important here, of course. But the truth is that we shall not
compose the $\xi_{n,j}$ themselves, but slightly different functions $\zeta_{n,j}$, $j \in I(n)$, 
obtained from the $\xi_{n,j}$
by affine transforms that will be explained soon, and which also satisfy the property \eqref{7a3}.

Let us renumber all the $\xi_{n,j}$ and the corresponding $\zeta_j$ as the $\xi_j$ and $\zeta_j$, 
$1 \leq j \leq j_N$, where $j_N \leq C N$, 
and for all $j$ let us denote by $n(j)$ the generation of wavelets that $j$ belongs to. 
We number the $\xi_j$ and $\zeta_j$ so that $n(j)$ is non-decreasing. 
The order will be important, because we will need to compose the functions
\begin{equation} \label{7a4}
f_j = f_{\zeta_j} = Id + \zeta_j,
\end{equation}
and composition is not commutative. We call 
\begin{equation} \label{7a5}
F_j = f_j \circ \ldots \circ f_1
\end{equation}
the partial compositions (which will be needed to compute differences). We want to evaluate things like $F_{j_N}-Id$
(the symbol of our last mapping), and more generally
\begin{eqnarray} \label{7a6}
F_j - Id &=& f_j \circ \ldots f_1 - Id = (f_j - Id) \circ F_{j-1} + F_{j-1} - Id = \ldots 
\nn\\
&=& \sum_{1 \leq m \leq j} (f_m - Id) \circ F_{m-1}
= \sum_{1 \leq m \leq j} \zeta_m \circ F_{m-1}.
\end{eqnarray}
The symbol $m$ is usually reserved for the integer part of $k/2$, but we use it differently in this section (in which we will have no interest in the parameter $k/2$). We would like $F_{j_N}- Id$ to be close to $\xi_N$, or equivalently $\delta$ to be small, where 
\begin{equation} \label{7a7}
\delta : = (F_{j_N} - Id) - \xi_N = \sum_{1 \leq m \leq j_N} \zeta_m \circ F_{m-1} - \xi_N
= \sum_{1 \leq m \leq j_N} (\zeta_m \circ F_{m-1} - \xi_{m}).
\end{equation}
So we are interested in the norms of
\begin{equation} \label{7a8}
\delta_m = \zeta_m \circ F_{m-1} - \xi_{m}.
\end{equation}
Of course taking $\zeta_m = \xi_{m} \circ  F_{m-1}^{-1}$ would sound nice, but it does not satisfy the 
antisymmetry condition required because we want to make sure that  
\begin{equation} \label{7a9}
\text{$\zeta_m$ is constructible, because it lies in $\Psi(J)$ for some $J \subset\subset[0,1]$.}
\end{equation}
So we will let $\zeta_m$ be a sum of translated and dilated wavelets, but not directly $\zeta_m = \xi_{m}$,
because we can make $\delta_m$ smaller that way. Recall that 
\begin{equation} \label{7a10}
\xi_{m} = \sum_i \alpha_i \theta_i
\end{equation}
with different wavelets $\theta_j$ that live in disjoint intervals $J_i$. We want to take
\begin{equation} \label{7a11}
\zeta_m = \sum_i \alpha_i \theta_i \circ a_i,  \text{ for some affine function $a_i$} 
\end{equation}
that we choose now. Call $c_i$ the antisymmetry center of the the wavelet $\theta_i$; 
we chose $a_i$ so that 
\begin{equation} \label{7a12}
a_i \circ F_{m-1}(c_i) = c_i \ \text{ and } \  (a_i \circ F_{m-1})'(c_i) = 1.
\end{equation}

Set $G_i = a_i \circ F_{m-1}$; since 
\begin{equation} \label{7a13}
\zeta_m \circ F_{m-1} = \sum_i \alpha_i \theta_i \circ G_i,  
\end{equation}
at least the centers of the pieces will be preserved. We shall prove by induction, 
among all, that 
\begin{equation} \label{7b14}
\text{$F_{m-1}$ is $(1+\alpha)$-bilipschitz,}
\end{equation}
with an $\alpha > 0$ that is as small as we want. This means that once we are done with the $\xi_m$ and $\zeta_m$,
we will have to check that \eqref{7b14} holds for $F_m$. Of course this is trivial when $m=1$ (and $F_0 = I$).

Because of \eqref{7b14}, $a_i$ also is $(1+\alpha)$-bilipschitz, and then 
the pieces $\lambda_i \theta_i \circ a_i $ that compose $\zeta_m$ are still supported in intervals $\wh J_i$ 
centered at the same points as the $J_i$, and barely larger than the $J_i$. In particular the $\wh J_i$ still live at positive distances 
(larger than $C^{-1} 2^{-n(m)}$) from each other. 
Because of this, and since composing with an affine mapping preserves the antisymmetry of each piece,
$\zeta_m$ satisfies \eqref{7a9}. Of course this was the point of taking $a_i$ affine.
Also, when we compare the formulas \eqref{7a10} and \eqref{7a11}, we see that 
\begin{equation} \label{7b15}
\frac12 N_j(\xi_m) \leq N_j(\zeta_m) \leq 2 N_j(\xi_m)
\ \text{ for } 0 \leq j \leq k,
\end{equation}
again, because $a_i$ is $(1 + \alpha)$-bilipschitz for some 
small $\alpha$. We will use the specific choice of \eqref{7a12} to make some terms smaller.
For instance, it follows from \eqref{7a13} that for $x\in \wh J_i$, 
\begin{equation} \label{7a14}
|G_i(x) - x| = |(a_i \circ F_{m-1})(x) - x| \leq |\wh J_i|^2 ||(a_i \circ F_{m-1})''||_{L^\infty(\wh J_i)}
\leq C 2^{-2n(m)} N_2(F_{m-1})
\end{equation}
where $n(m)$ still denotes the current generation. We return to $\delta_m$. Recall that
\begin{equation} \label{7a15}
\delta_m = \sum_i \alpha_i (\theta_i \circ G_i - \theta_i)
\end{equation}
by \eqref{7a8}, \eqref{7a10}, and \eqref{7a13}. We want to estimate its norm $N_k(\delta_m)$. 
First observe that by definition \eqref{7a4} of $f_j$ and iterations of \eqref{3e3} one can show 
$$
N_k(F_{m-1}) \leq \sum_{j \leq m-1} N_k(\zeta_j) + C \Big[\sum_{j \leq m-1} N_k(\zeta_j) \Big]^2 
$$
e.g., again by induction, using 
$N_k(F_m) \le N_k(\zeta_m) + N_k(\psi_{F_{m - 1}}) + CN_k(\zeta_m)N_k(\psi_{F_{m - 1}})$ (with $\psi_{F_{m - 1}} = F_{m - 1} - Id$). Then, by \eqref{7b15}, 
\begin{equation} \label{7a16}
N_k(F_{m-1}) \leq \sum_{j \leq m-1} N_k(\zeta_j) + C \Big[\sum_{j \leq m-1} N_k(\zeta_j) \Big]^2
\leq 2\sum_{j \leq m-1} N_k(\xi_j) + C \Big[\sum_{j \leq m-1} N_k(\xi_j) \Big]^2.
\end{equation}
We like the second terms because they are of order $2$. 
The affine mappings are harmless, so we immediately get that 
\begin{equation} \label{7a17}
N_k(G_i) \leq 2 N_k(F_{m-1}).
\end{equation}
Let us differentiate $k$ times the first piece $\theta_i \circ G_i(x)$, and of course, for a given $x$,  
restrict our attention to the interval $\wh J_i$ that contains $x$; if there is none, then $\delta_j=0$. 
The main term is $A(x) = G_i'(x)^k \theta_i^{(k)}(G_i(x))$, where we always differentiate $\theta_i$. But 
\begin{equation} \label{7a18}
|G_i'(x) - 1| \leq |x - c_i| ||G_i''||_\infty \leq C 2^{-n(m)} N_2(G_i) \leq C 2^{-n(m)} N_k(F_{m-1}),
\end{equation}
so we have
\begin{eqnarray} \label{7a19}
|A(x) - \theta_i^{(k)}(x)| 
&\leq& |G_i'(x)^k -1| \, |\theta_i^{(k)}(G_i(x))| + |\theta_i^{(k)}(G_i(x)) - \theta_i^{(k)}(x)|
\nn\\
&\leq& C 2^{-n(m)} N_k(F_{m-1}) |\theta_i^{(k)}(G_i(x))| + |\theta_i^{(k)}(G_i(x)) - \theta_i^{(k)}(x)|
\nn \\
&\leq& C 2^{-n(m)} N_k(F_{m-1}) |\theta_i^{(k)}(G_i(x))| + \|\theta_i^{(k+1)}\|_\infty 2^{-2n(m)} N_2(F_{m-1}),
\end{eqnarray}
where the second inequality holds thanks to \eqref{7b14}.
This is where we are about to pass the dangerous step, since we just introduced an extra derivative, as we said we would at the end of Section \ref{S4}. Fortunately, this derivative falls on a wavelet, and can be evaluated by homogeneity.
Note that this is the reason why we demanded one more derivative in \eqref{5a7}. Indeed, \eqref{5a10}
yields 
\begin{equation} \label{7a20}
\|\theta_i^{(k)}\|_\infty \sim 2^{-n(m)/2} 2^{n(m) k}
\ \text{ and } \ 
\|\theta_i^{(k+1)}\|_\infty \sim 2^{-n(m)/2} 2^{n(m)(k+1)},
\end{equation}
so 
\begin{equation} \label{7b23}
|A(x) - \theta_i^{(k)}(x)| \leq C 2^{-n(m)} N_k(F_{m-1}) \|\theta_i^{(k)}\|_\infty 
+ C\|\theta_i^{(k)}\|_\infty 2^{-n(m)} N_2(F_{m-1}),
\end{equation}
where we just lost half of the decay of the second term because we took this extra derivative. This gives
\begin{eqnarray} \label{7b24}
|\alpha_i| |A(x) - \theta_i^{(k)}(x)| &\leq& C 2^{-n(m)} N_k(F_{m-1}) \|\xi_m^{(k)}\|_\infty 
+ C\|\xi_m^{(k)}\|_\infty 2^{-n(m)} N_2(F_{m-1})
\nn\\
&\leq& C 2^{-n(m)} N_k(F_{m-1}) \|\xi_m^{(k)}\|_\infty = C 2^{-n(m)} N_k(F_{m-1}) N_k(\xi_m)
\end{eqnarray}
by \eqref{7a10} and the fact that the $\theta_i$ in the sum have disjoint supports.
This was for the main term coming from $A$, but we also need to estimate the other terms
in $(\theta_i \circ G_i)^{(k)}$, where not all the derivatives hit $\theta_i$. 
These terms are easier because they contain at least one second derivative of $G_i$ 
(which is at least of order $1$, as we saw in \eqref{7a18}),
and are also of order $1$ from the $\theta_i$ 
(because of the $\alpha_i$ and the comparison with $\xi_{m}$).
More precisely, these terms are dominated by $C N_k(\xi_{m}) N_k(F_{m-1})$. Altogether,
\begin{equation} \label{7a22}
N_k(\delta_m) \leq C N_k(\xi_{m}) N_k(F_{m-1}),
\end{equation}
which will be (quadratic) enough for us. In addition, observe that 
\begin{equation} \label{7b26}
N_k(\xi_{m}) \leq C N_k(\psi_2) 
\end{equation}
by (the proof of) \eqref{5a16}.
We are now ready to sum over $m$. We are not afraid to lose a small polynomial in $N$, 
the total number of generations, and anyway it seems that we have to. By \eqref{7a16} and \eqref{7b26}
\begin{equation} \label{7b27}
N_k(F_{m-1}) \leq C (1+m) N_k(\psi_2) + C (1+m)^2 N_k(\psi_2)^2
\end{equation}
so \eqref{7a22} yields 
\begin{equation} \label{7a23}
N_k(\delta_m) \leq C(1+m) N_k(\psi_2)^2 + C(1+m)^2 N_k(\psi_2)^3
\end{equation}
and for $\delta = \sum \delta_m$ from \eqref{7a7}, 
\begin{equation} \label{7a23bis}
N_k(\delta) \leq C (1+N)^2  N_k(\psi_2)^2 + C(1+N)^3 N_k(\psi_2)^3.
\end{equation}
This is the estimate that we wanted. Notice that since $N_k(\zeta_{m}) \leq C N_k(\psi_2)$
by \eqref{7b26} and \eqref{7b15}, we also have $N_k(f_m) = N_k(\zeta_m) \leq C N_k(\psi_2)$
(see \eqref{7a4}), so by the composition rule (or \eqref{3e3}), we also have 
\begin{equation} \label{7b30}
N_k(F_{m}) \leq C (2+m) N_k(\psi_2) + C (2+m)^2 N_k(\psi_2)^2,
\end{equation}
and \eqref{7b14} holds for $m$ if $\varepsilon$ is small enough for the quadratic term to be small.
For Theorem~\ref{t1a9}, this is the rule (we are allowed to choose $\varepsilon$ depending on $M$, or, equivalently, $N$); for Theorem \ref{t1}, this is also how we will choose $\varepsilon$ in the next section.

\subsection{End of proof of Theorem \ref{t1a9}}

At this point we are able to choose a constructible final map $F_{j_N}$, whose symbol
$\psi_{F_{j_N}} = F_{j_N}-Id$ is almost the same as the main part $\xi_N$ of our initial target $\psi_2$
(see \eqref{7a1} and \eqref{7a7}). 

Let us first check that $F_{j_N}$ satisfies all the requirements for Theorem~\ref{t1a9}, and then later 
we will return to Theorem \ref{t1}, choose a new target $\psi_3$ (the last one of our main construction)
and finish the first round of estimate before we iterate. 

For Theorem~\ref{t1a9}, we need to take a small notational detour, 
after which we will see that we essentially have all the direct estimates. 
We started the approximation process with the construction of $\psi_1$ and $\psi_2$,
and we targeted $f_\psi$ by a first composition of mappings $g_2 = f_{proj} \circ f_{cur}$,
where each piece is the composition of at most $C$ mappings of $\Psi(I)$. 
The number of mappings depends on $k$ through, for instance, the number of wavelets $\varphi$ in our basis of $V_0$, 
and even their regularity, but this is all right. Thus, in our description of the $F_M$ in Theorem~\ref{t1a9}, 
which we shall now call $\wt F_M$ to avoid confusion with the notation of this section,
the first mappings are here to reconstruct $f_{proj} \circ f_{c}$. So at some point we reach
\begin{equation} \label{7b31}
\wt F_{M_2} = f_{proj} \circ f_{c},
\end{equation}
and then we intend to continue the composition with the $f_m$ and $F_m$ of this section. 
The definition of the new target $f_{\psi_2}$ at this point is that 
\begin{equation} \label{7b32}
f = f_\psi = f_{\psi_2} \circ f_{proj} \circ f_{c},
\end{equation}
where $f_{\psi_2} = I + \psi_2$ as usual; see above \eqref{6a1}, 
where we say that $f_{\psi_1} = f_{\psi_2} \circ f_{\psi_{proj}}$, and \eqref{2e4} that says that 
$f_\psi = f_{\psi_1} \circ f_c$. Thus 
\begin{equation} \label{7b33}
\wt F_{M_2} = f_{\psi_2}^{-1} \circ f.
\end{equation}
From $\wt F_{M_2}$, we compose with the mappings $f_m$ that we constructed in this section, and 
set 
\begin{equation} \label{7b34}
\wt F_{M} = F_m \circ \wt F_{M_2}, \text{ with $m = M-M_2$}.
\end{equation}
The total number of mappings is $C + j_N \leq C_0 N$, as promised. 

We need to check that all our partial compositions $\wt F_M$ satisfy \eqref{1a16}.
For the first ones, up to $F_{M_2}$, this is by our proof that 
\begin{equation} \label{7b35}
N_k(\psi_2) \leq C N_k(\psi_1) \leq C N_k(\psi) + C N_0(\psi),
\end{equation}
by \eqref{6a17} and \eqref{2e9}, and where $C N_0(\psi)$ is needed when we do not assume that $\psi$ is 
compactly supported in $I$. 
We say the proof because we also have to estimate the intermediate mappings
that compose $f_c$ and $f_{proj}$.

For the next ones, observe that $\wt F_M = F_m \circ \wt F_{M_2}$, and that we already have the 
desired control on $\wt F_{M_2}$. We apply again the composition rule \eqref{3e3}, with 
$\varphi = F_{M_2}-Id$ and $\psi = F_m -Id$, and see that it is enough to prove (the analogue of) 
\eqref{1a16} for the $F_m$. This is fortunate, because \eqref{7b30} and \eqref{7b31} give exactly 
the required estimate (provided that $\varepsilon$ is small enough, depending on $N$).

We are left with \eqref{1a17} to check. 
Call $\wt F = \wt F_{C_0 N} = F_{j_N} \circ \wt F_{M_2} = F_{j_N} \circ f_{\psi_2}^{-1} \circ f$ 
the last mapping. We need to estimate the quantity
\begin{equation} \label{7b36}
\Delta : = \|\wt F - f \|_{L^\infty} = \|F_{j_N} \circ f_{\psi_2}^{-1} \circ f - f \|_{L^\infty}
= \|F_{j_N} - f_{\psi_2}\|_{L^\infty}
\end{equation}
(composing with $f^{-1} \circ f_{\psi_2}$ does not change the sup norm). Observe that
\begin{equation} \label{7b37}
F_{j_N} - f_{\psi_2} = F_{j_N} - Id - \psi_2 =  \delta + \xi_N - \psi_2 = \delta -\eta - \xi_\infty
\end{equation}
by \eqref{7a7} and \eqref{7a1}, so we have three small terms to estimate. 
By \eqref{7a1} and \eqref{6a16}, $\eta = P_0(\psi_2)$
and 
\begin{equation} \label{7b38}
N_0(\eta) \leq N_k(\eta) \leq C N_k(\psi_1)^{3/2} \leq C (N_k(\psi)+N_0(\psi))^{3/2},
\end{equation}
which is good enough for \eqref{1a17} if $\varepsilon$ is small enough. Next
$\xi = \psi_2 - \eta = \psi_2 - P_0(\psi_2)$
and $\xi_\infty = \xi - P_{N+1}(\xi) = \sum_{j > N+1} Q_j(\xi) 
= \sum_{j > N+1} Q_j(\psi_2)$ because $Q_j(P_0(\psi_2)) = 0$, so \eqref{5a22} says that 
\begin{equation} \label{7b39}
N_0(\xi_\infty) \leq \sum_{j > N+1} N_0(Q_j(\psi_2)) \leq C 2^{-kN} N_k(\psi_2) 
\leq C 2^{-kN} (N_k(\psi)+N_0(\psi)),
\end{equation}
as needed. Finally by \eqref{7a23bis} 
\begin{eqnarray} \label{7b40}
N_0(\delta) &\leq& N_k(\delta) \leq C (1+N)^2 N_k(\psi_2)^2 + C (1+N)^3 N_k(\psi_2)^3
\nn\\
&\leq& C (1+N)^2 (N_k(\psi)+N_0(\psi))^2 + C (1+N)^3 (N_k(\psi)+N_0(\psi))^3
\end{eqnarray}
Altogether,
\begin{eqnarray} \label{7b41} 
\Delta &\leq&  C 2^{-kN} (N_k(\psi)+N_0(\psi)) + C (N_k(\psi)+N_0(\psi))^{3/2} 
\nn\\
&\,&\hskip20pt
+C (1+N)^2 (N_k(\psi)+N_0(\psi))^2 + C (1+N)^3 (N_k(\psi)+N_0(\psi))^3,
\end{eqnarray}
in accordance to \eqref{1a17}.
This complete the proof of Theorem \ref{t1a9}. \qed

\section{We iterate at smaller scales} 
\label{S9}

Let us return to Theorem \ref{t1} and summarise how we complete our first round of approximation, before we do anything else. We have reached an interesting constructible function $F_{j_N}$, and we need to 
compare it to $Id +\psi_2$ to compute our next target $\psi_3$. That is, we define $\psi_3$
as usual by
\begin{equation} \label{7b42}
f_{\psi_2} = f_{\psi_3} \circ F_{j_N},
\end{equation}
or $f_{\psi_3} = F_{j_N} \circ f_{\psi_2}^{-1}$, and we estimate $\psi_3$.
First of all, we have the estimate \eqref{7b30} on 
$N_k(F_{j_N})$, and the same one for its inverse (see \eqref{3e10}), from which we deduce (by \eqref{3e9}) that 
\begin{eqnarray} \label{7b43}
N_k(\psi_3) &\leq& C (1+N) N_k(\psi_2) + C (1+N)^2 N_k(\psi_2)^2
\nn\\
&\leq& C (1+N) [N_k(\psi)+N_0(\psi)] + C (1+N)^2 [N_k(\psi)+N_0(\psi)]^2.
\end{eqnarray}
This is not much worse than what we started with, but more importantly we have much better
$L^\infty$ bounds on $N_0(\psi_3)$. Indeed, 
\begin{equation} \label{7b44}
\psi_3(x) = f_{\psi_3}(x)-x = F_{j_N} \circ f_{\psi_2}^{-1}(x) - x,
\end{equation}
hence by \eqref{7b36} and \eqref{7b41}, 
\begin{eqnarray} \label{7b45} 
\|\psi_3\|_\infty = \|F_{j_N} - f_{\psi_2}\|_\infty = \Delta 
&\leq&  C 2^{-kN} (N_k(\psi) +N_0(\psi)) + C (N_k(\psi)+N_0(\psi))^{3/2} 
\nn\\
&\,&\hskip1pt
+C (1+N)^2 (N_k(\psi)+N_0(\psi))^2 + C (1+N)^3 (N_k(\psi)+N_0(\psi))^3.
\end{eqnarray}
This concludes happily the first round of our construction.

~\

Let us review again the situation before we iterate. We started with a small target $\psi = \psi_0 = f - Id$, assumed that 
\begin{equation} \label{8a1}
a_0 := N_0(\psi_0) + N_k(\psi_0) \leq \varepsilon
\end{equation}
(in fact, it was the maximum, but the sum will be more convenient and there is no real difference), and showed that
we can replace $\psi_0$ with a new target $\psi_3$, for which we proved the bounds 
\eqref{7b43} and \eqref{7b45} for $N_k(\psi_3)$ and $N_0(\psi_3)$.

We want to apply the construction of Section \ref{S8} to $\psi_3$, but at a different scale 
$\ell$, small enough, which we shall choose later.
The rescaled functions $\ol f$ and $\ol \psi$ associated to $f$ and $\psi$ are defined by 
\begin{equation} \label{8a2}
\ol f(x) = \ell^{-1 }f(\ell^{-1}x)  \ \text{ and } \ \ol \psi(x) = \ell^{-1 }\psi(\ell^{-1}x);
\end{equation}
we choose the normalization that preserves the Lipschitz constants of $f$ and $\psi$, which is reasonable 
because $Id$ plays a special role and ought to be preserved. Then the other norms scale like this:
\begin{equation} \label{8a3}
N_j(\ol \psi) = \ell^{j-1} N_j(\psi) \ \text{ for } 0 \leq j \leq k.
\end{equation}
With these conventions, the rescaled norms for $\psi_3$ are
\begin{eqnarray} \label{8a4}
N_k(\ol \psi_3) &=&  \ell^{k-1} N_k(\psi_3)
\leq C \ell^{k-1} (1+N) [N_k(\psi)+N_0(\psi)] + C \ell^{k-1}(1+N)^2 [N_k(\psi)+N_0(\psi)]^2
\nn\\ 
&\,&\hskip60pt
\leq C \ell^{k-1} (1+N) a_0 + C \ell^{k-1} (1+N)^2 \varepsilon a_0
\end{eqnarray}
by \eqref{7b43} and \eqref{8a1}, and 
\begin{eqnarray} \label{8a5}
N_0(\ol \psi_3) &=&  \ell^{-1} N_0(\psi_3)
\leq C \ell^{-1} 2^{-kN} a_0+ C \ell^{-1} a_0^{3/2}
+C \ell^{-1} (1+N)^2 a_0^2 + C \ell^{-1} (1+N)^3 a_0^3
\nn\\
&\leq& C \ell^{-1} a_0 \big\{ 2^{-kN} + \varepsilon^{1/2} + (1+N)^2 \varepsilon + (1+N)^3 \varepsilon^2\big\}
\end{eqnarray}
by \eqref{7b45} and \eqref{8a1}. We want to make sure that 
\begin{equation} \label{8a6}
a_1 : = N_k(\ol \psi_3) + N_0(\ol \psi_3) \leq \tau a_0 \leq \tau \varepsilon,
\end{equation}
where we added a convergence factor $\tau < 1$ for later convenience, because this way we 
can apply the construction of Section \ref{S8} (as it is, at scale $1$) to $\ol\psi_3$. And of course we will 
iterate. So it will be enough to make sure that 
\begin{equation} \label{8a7}
C \ell^{k-1} (1+N)  + C \ell^{k-1} (1+N)^2 \varepsilon \leq \tau/2
\end{equation}
and 
\begin{equation} \label{8a8}
C \ell^{-1} \big\{ 2^{-kN} + \varepsilon^{1/2} + (1+N)^2 \varepsilon + (1+N)^3 \varepsilon^2\big\}
 \leq \tau/2.
\end{equation}
Let us give our constraints on the constants $N$, $\ell$, $\tau$, and $\varepsilon$;
maybe more constraints will show up in the next section, but they will be compatible.
We intend to make sure that 
\begin{equation} \label{8a9}
C(1+N) \ell^{k-1} \leq \frac{\tau}{4}   \text{ and } C 2^{-kN} \leq \frac{\tau \ell}{4} ,
\end{equation}
with $C$ as above, and for this we decide that 
\begin{equation} \label{8a10}
\tau = \ell^{k-\frac54}
\end{equation}
(in fact any power smaller than $k-1$ and close enough will do the job here), and 
$N$ as small as possible, but such that 
\begin{equation} \label{8a11}
C 2^{-kN} \leq \frac{\tau \ell}{4} = \frac{\ell^{k-\frac14}}{4}.
\end{equation}
The precise power would not matter much, the point is that $N \leq C \log(1/\ell)$.
Then, for $\ell$ small enough, the first part of \eqref{8a9} holds as well.

Observe here that, given our choice of the parameter $\tau$, the relation between $N$ and $l$ is something very natural: the condition $N \leq C \log(1/\ell)$ says that $l \le 2^{-N}$, so, in other words, we start iterating necessarily from a scale smaller than the one we worked on at the previous iteration of our algorithm. The condition $C 2^{-kN} \leq \frac{\tau \ell}{4}$ says that that scale should not be too small at the same time.

Once the constants $N, l, \tau$ are chosen, we choose $\varepsilon$ so small (depending on $\ell$)
that \eqref{8a7} and \eqref{8a8} hold (or we make it smaller than our previous choices).

With this choice, we see that the next function $\ol\psi_3$ satisfies 
the requirement \eqref{8a1} (and even better), 
so we can associate a new target $\wt \psi_6$ to it, and the normalised function 
$\ol \psi_6(x) = \ell^{-1 }\wt\psi_6(x/\ell)$. We keep the index $6$ to avoid changing the 
name of $\psi_3$, but we will not discuss the intermediate functions labelled $4$ and $5$.
The function $\wt\psi_6$ is somewhere between plain and normalised, 
in the sense that if we had performed the construction of Section \ref{S8}, starting from $\psi_3$ and at the scale $\ell$, 
we would have obtained a function $\psi_6$ such that $\psi_6(x) = \ell \wt\psi_6(\ell x)$.
Thus the relation between plain and normalised is
\begin{equation} \label{8a12}
\ol \psi_6(x) = \ell^{-2 }\psi_6(\ell^{-2}x). 
\end{equation}
The estimates of Section \ref{S8} yield, as for \eqref{8a6}
\begin{equation} \label{8a13}
a_2 : = N_k(\ol \psi_6) + N_0(\ol \psi_6) \leq \tau a_1 \leq \tau^2 a_0,
\end{equation}
and we are in position to apply Section \ref{S8} again, at scale $\ell^2$, and so on. We get
sequences of non-normalised targets $\psi_{3j}$ and normalised one $\ol \psi_{3j}$, 
with the relations
\begin{equation} \label{8a14}
\ol \psi_{3j}(x) = \ell^{-j }\psi_{3j}(\ell^{-j}x), 
\end{equation}
\begin{equation} \label{8a15}
a_j : = N_k(\ol \psi_{3j}) + N_0(\ol \psi_{3j}) \leq \tau^j a_0.
\end{equation}

Let us observe that with such choices, our successive targets $\psi_{3j}$ lie uniformly
in $C^{k-1}$, and even with decaying norms, because 
\begin{eqnarray} \label{8a16}
N_{k-1}(\psi_{3j}) &=& \ell^{(1-(k-1))j} N_{k-1}(\ol\psi_{3j})
\leq \ell^{(2-k)j} (N_{k}(\ol\psi_{3j})+N_0(\ol\psi_{3j}))  
\nn\\
&=& \ell^{(2-k)j} a_j \leq \ell^{(2-k)j} \tau^j a_0 
=\ell^{(2-k)j} \ell^{(k-\frac54)j} a_0 = \ell^{3j/4} a_0
\end{eqnarray}
by \eqref{8a3}, \eqref{8a15}, and \eqref{8a10}. And with the same proof,
\begin{equation} \label{8a17}
N_{0}(\psi_{3j}) \leq \ell^{(k - \frac{1}4) j} a_0.
\end{equation}

This is good to know, because it says that our finite compositions (call them $F_j$)
of constructible mappings converge in $C^{k-1}$ to $f$. Indeed, the usual target relation is 
\begin{equation} \label{8a18}
f = f_\psi = f_{\psi_{3j}} \circ F_j  \ \text{ or equivalently } \   F_j = f_{\psi_{3j}}^{-1} \circ f
\end{equation}
(compare with \eqref{7b32} or similar identities), and $f_{\psi_{3j}}^{-1}$ converges to $Id$
in $C^{k-1}$.

\medskip

Unfortunately, for the geometric control on the curves that will be done in the next section,
and even to control some intermediate mappings, 
we get a worse estimate that does not warranty a uniform control in $C^{k-1}$.

\section{We construct the green and red curves} 
\label{S10}

So far we just constructed a sequence of constructible mappings 
$\{ f_n = Id + \psi_n \}$ so that 
$F_q = f_{q} \circ \ldots \circ f_1$ converges to $f$, as in \eqref{8a18}.
Each $f_n$ was constructed in a generation $j(n)$, and we proved good bounds for the 
$f_n$ and the $F_q$. We allowed less than $CN$ mappings 
for each given generation $j$; this is all right, because now $N$ is fixed and chosen.

But now we want to construct curves associated to these mappings, and pile them to get a global
set of red and green curves. We thus cut $J$ into intervals $J_n$, so that 
($\sum_n |J_n| = |J|$ and) 
\begin{equation} \label{9a1}
|J_n| \sim (CN)^{-1}\ell^{j(n)} |J| \sim \ell^{j(n)} |J|.
\end{equation}
It could sound appealing to take a slower rate of decay for the $|J_n|$, and indeed it would help make 
the $t$-derivatives below smaller, but it does not seem to help getting better estimates globally,
because we also need to control $x$-derivatives. So we don't try. 
We put the $J_n$ in the same order that the $f_n$ were constructed, i.e., large intervals at the bottom (if we see them as vertical). 
To feel more comfortable, we may even add a small buffer interval between consecutive $J_n$,
where the green curves will be horizontal segments and the red ones vertical segments, but this is not
really needed as our curves will glue nicely.

Each $f_n$ is of the form $Id + \psi_n$ (a different numbering than in the previous sections),
where $\psi_n \in \Psi(I)$ is constructible and a sum of antisymmetric mappings with 
disjoint supports. We have seen in Lemma \ref{l1a5} how to construct, in $R_n = I \times J_n$,
green and red curves associated to $\psi_n$, and now we put all these curves together, i.e., take 
all the green curves constructed for the $\psi_n$, and for the red curve, concatenate (with a nice gluing)
all the red curves that we have constructed. This is how we define the curves for Theorem \ref{t1}; we now need to check that they satisfy the conclusions of the theorem.

For the green curves, they foliate $R$ because the green curves for $\psi_n$ foliate $R_n$, and they satisfy 
\eqref{1a1} by construction.

For the red curves, they also foliate $R$ for the same reason, and \eqref{1a2} follows from the fact
that the red curve that starts from $(x,0)$ goes to the successive $(F_q(x), t_q)$, where $t_q$ is the final endpoint of $J_q$, and we saw that the $F_q$ converge (even in $C^{k-1}$) to $f$. 

But we also have to check that our curves are globally $C^m$, where $m$ is the integer part of $k/2$.
Recall that this means that the functions $r_{x}$ (whose graph is $R_{x}$) and $g_{t}$ (whose graph is $G_{t}$) 
are all $C^m$ functions of $(x,t)$. By the locality of $C^m$, it will be enough to check this on each rectangle $R$
of the construction, with uniform estimates, so that no bad surprise happens at the top of the construction (i.e., the place where we accumulate the rectangles corresponding to the later generations in the algorithm).

Maybe we should recall here that for generations $j(n) \geq 2$, we actually work in parallel on many horizontal intervals (call them $I_{n,i}$)
of roughly the same size $\ell^{j(n)}|I|$, with different intermediate targets $\psi_{n,i}$, and our estimates are independent. 
This is made easier by the fact that we can use the curtain trick
to have disjoint supports $I_{n,i}$ at each step. We will pretend below that we work with a single $I_{n,i}$, but not right now. 

So we consider the curves in $R_n$, associated to functions $\psi_n$, and let us go directly to the ones that 
give the worst estimates, which are the ones coming from the commutators of Section \ref{S5}. 
Recall that we grouped together four maps $f=f_t$, $g=g_t$, and their inverse, to form a commutator 
$R_t$. We did this independently on various disjoint intervals $I_r$, $1 \leq r \leq \wt n$ (were the notation for $r$ and $\wt n$ was $j$ and $n$ respectively in Section \ref{S7}, which we now change to avoid the confusion, and the only point in $\wt n$ is that it is bounded), to project on the orthogonal of $V_0$, and we computed the coefficients 
$t = t_r$, from other coefficients $\mu_r$, with the rule $\mu_r = \frac{t_r^2}{2} \, {\rm sign}(t_r)$
(above \eqref{6a8}), that we chose to solve a system of equations. Finally the $\mu_r$ satisfy 
\eqref{6b15}, and 
\begin{equation} \label{9a2}
|t_r| \leq C \sup_r |\mu_r|^{1/2} \leq C N_0(\psi_1)^{1/2} \leq C a_0^{1/2},
\end{equation}
where $a_0$ is given by \eqref{8a1}. 
And thus 
\begin{equation} \label{9a3}
N_k(f_{t_r}) + N_0(f_{t_r}) \leq C a_0^{1/2},
\end{equation}
because $f_{t_r}$ always has a small support, and similarly for the other three functions. 
For further generations $j$, we would still have 
\begin{equation} \label{9a4}
N_k(\ol f_{t_r}) + N_0(\ol f_{t_r}) \leq C a_j^{1/2} \leq C \tau^{j/2} a_0^{1/2} = C \ell^{j(\frac{k}2 -\frac58)} a_0^{1/2},
\end{equation}
by \eqref{8a15} and \eqref{8a10}, and similarly for the other three functions.
Now the analogue of \eqref{8a16} is different, because of the square root:
\begin{equation} \label{9a5}
N_{m}(f_{t_r}) = \ell^{(1-m)j} N_{m}(\ol f_{t_r})
\leq \ell^{(1-m)j} \big\{N_{0}(\ol f_{t_r}) + N_{k}(\ol f_{t_r}) \big\}
\leq C \ell^{j(\frac{k}2 +\frac38 -m)} a_0^{1/2}.
\end{equation}
The exponent is positive when $m < \frac{k}{2} +\frac{3}{8}$ which, since we decided to consider
only integers, means that $m$ is at most the integer part of $\frac{k}2$ (call this $m_0$
for the moment), as in the statement of Theorem \ref{t1}.
Thus, the partial regularity result at the end of Section \ref{S9} was a little deceptive, in the sense that 
we have intermediate values of $t \in J$ where we can only prove that our partial targets $f_{t_r}$
(and, we will see soon, our curves $G_t$) are uniformly $C^m$ when $m \leq m_0$. 
See below for some more details and verifications.

This was the worse case: for the other rectangles $R_n$ coming from the curtain trick, or the approximation of wavelets, the same computation (without the square root) yields that our intermediate targets $\psi$ are uniformly $C^{k-1}$ on $R_k$.

\ms
We now return to the geometry and estimates on curves, and 
we need to check that for $m \leq m_0$, our functions $g_t$ and $r_x$ are uniformly $C^m$ on the rectangles $R_n = I \times J_n$ ($J_n$ is as in \eqref{9a1}). Recall that, according to our construction, during the step corresponding to the rectangle $R_n$, we treat a function $\psi$ which is a sum of many terms with disjoint supports $I_n$ of size roughly $\ell^{j(n)}|I|$: we called them $I_{n, i}$ on the previous page. We now pretend that, instead of working on $R_n$, we work on $I_n \times J_n$. 
Again, the worst case can arise from the commutators, and we can take $I_n$ to be one of the intervals that come from the special mappings in Section \ref{S5}, but these are still of the same size $\ell^{j(n)}|I|$. 
Then the construction for $g_t$ and $r_x$ is given in Lemma \ref{l1a5}. With $I = I_n$ and $J = J_n$, it yields
\begin{equation} \label{10b6}
||\nabla_{x,t}^m \, g_t(x)||_\infty + ||\nabla_{x,t}^m\, r_x(x)||_\infty \leq C \left(1 + \frac{|I|}{|J|}\right)^{(m + 1)}\sum_{i \leq m} N_i(f) 
\, \text{ for } 1 \leq m \leq m_0,
\end{equation}
where $f$ is one of our $f_{t_r}$ from above. 
We have $N_{i}(f) \leq C \ell^{j(n)(\frac{k}{2} +\frac{3}{8} -i)} a_0^{1/2}$ by \eqref{9a5}, so
\begin{equation} \label{10b7}
||\nabla_{x,t}^m \, g_t(x)||_\infty + ||\nabla_{x,t}^m\, r_x(x)||_\infty \leq C \left(1 + \frac{|I|}{|J|}\right)^{(m + 1)} \sum_{i \leq m} \ell^{j(n)(\frac{k}{2} +\frac{3}{8} - i)}  a_0^{1/2}
\, \text{ for } 1 \leq m \leq m_0. 
\end{equation}
As before, the exponents in the sum are positive, so we get the desired uniform estimate, plus a little bit of decay in $n$
to control the top. 
This completes our proof of Theorem \ref{t1}.
\qed

\section{One case where we can obtain a double control} 
\label{S11}

We present in this section a simple case where Theorem \ref{t1} allows us to solve the 
double control problem evoked in Remark \ref{r1a4}.

As for the theorem, we give ourselves a rectangle $R = I \times J$ and a function $f : I \to I$
that satisfies \eqref{1a4}. But we also give ourselves a function $g : J \to J$, that satisfies 
the analogue of \eqref{1a4} for $J$. We ask for an analogue of Theorem \ref{t1}, where we would replace 
\eqref{1a1} by a more general control constraint, namely 
\begin{equation}\label{10a1}
\text{ For $t\in J$, the green curve $G_t$ that starts at $(a, t)$ ends at $(b, g(t))$}.
\end{equation}

In general we do not know how to do this, but there is an easy special case, when we can use 
Theorem \ref{t1} to solve this double control problem. This is when we can find an interior point $x_0$
of $I$, and an interior point $y_0$ of $J$, such that 
\begin{equation} \label{10a2}
\psi = f-I \text{ has a zero of order $k$ at $x_0$,} 
\end{equation}
and 
\begin{equation} \label{10a3}
\varphi = g-I \text{ has a zero of order $k$ at $y_0$.} 
\end{equation}

Note, before we construct a solution for this case, that it contains the case when $\psi$ and $\varphi$
are supported in strict subintervals $I'$ and $J'$ of $I$ and $J$ respectively. 
But in both cases, the estimates for the solution deteriorate when $x_0$ or $y_0$ gets close 
to the boundary of $I$ or $J$, and when the length of $I'$ or $J'$ gets close to the length of the 
corresponding $I$ or $J$.

So let us construct a solution. Use $x_0$ and $y_0$ to split $I$ and $J$ as 
$I = I_1 \cup I_2 = [a, x_0] \cup [x_0,b]$ and $J = J_1 \cup J_2= [c, y_0] \cup [y_0,d]$.
For $i = 1,2$, call $f_i$ the restriction of $f$ to $I_i$, and $g_i$ the restriction of $g$ to $J_i$.

Then solve the problem of Theorem \ref{t1} in $I_1 \times J_2$ with data $f_1$, and 
in $I_2 \times J_1$ with data $f_2$. This makes sense because $f_i$ 
is an increasing bijection of $I_i$.
Also solve the problem of Theorem \ref{t1}, with the two coordinates 
exchanged, on $I_1 \times J_1$ with data $g_1$, and on $J_1 \times J_2$ 
with data $g_2$.
Again this is possible because $g_i$ is an increasing bijection on $J_i$.
This gives four sets of red and green curves, that we can nicely glue to each other (because we were
careful in \eqref{10a2} and \eqref{10a3} to get vanishing of order $k$). We get  
red and green curves that cover the whole $R$, and now we need to check \eqref{10a1} and \eqref{1a2}.
See Figure \ref{fig10}.

We prove \eqref{10a1}; \eqref{1a2} would be the same. 
Let $G_t$ be the green curve that starts at 
$(0,t)$, and first assume that $t \in J_1$. Then $G_t$ crosses $I_1 \times J_1$ first, and ends up
at $(x_0, g_1(t)) = (x_0, g(t))$. Then it crosses $I_2 \times J_1$, and it ends at the same altitude, i.e.
at $(b, g(t))$. If instead $t \in J_2$, then $G_t$ crosses $I_1 \times J_2$, and it ends up at
$(x_0, t)$. And then it crosses $I_2 \times J_2$, and ends up at $(b,g(t))$. See the picture. 
The verification of \eqref{1a2} is the same. \qed

\begin{figure}[!h]
  \centering 
 \includegraphics[width=7cm]{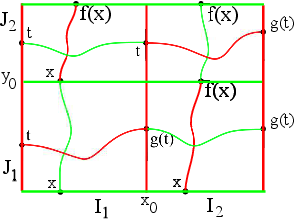} 
  \caption{Solution of the double control problem assuming two fixed points} 
\label{fig10}
\end{figure}

\section{Question \ref{q1a8} meets GPT: the exact decomposition of a small function into locally 
antisymmetric ones}
\label{S12}

In this section we provide a decomposition of the type \eqref{1a14} for the function $f$ in Theorem~\ref{t1}, 
which gives rise to a desired pair of foliations of curves. There are only three ingredients in this construction: 
Tsuboi's four-commutator theorem, something GPT called ``the paired-slot construction'', and our curtain trick, 
which turns out to be important here as well to rectify the mixture of the first two ingredients and get $f_i$ in \eqref{1a14} 
from $\Psi(I)$ and not from the class $\Psi(I')$ for a larger interval $I'$. 

We start with Tsuboi's four-commutator theorem presented in \cite{Ts}, about whose existence we were ignorant, and which was given to us by GPT. We keep the notation, including integer numbers, for different classes of regularity of functions as in Tsuboi's paper (not to be confused with our notation from the previous sections). 
We will also call support of a diffeomorphism $f$ the (usual) support of $\psi = f - Id$.
In addition, compositions will often be written as products (i.e., $fg$ instead of $f \circ g$).

\begin{thm}[Tsuboi, 1981]\label{tts}
Denote by $\mathcal{D}_K(n, c)$ the class of smooth diffeomorphisms $f$ of $[0, 1]$  ($f \in \mbox{Diff}^\infty([0, 1])$) supported inside $\left(\frac{1}{8}, \frac{7}{8}\right)$ such that $N_n(f - Id) < c$. Then, for any positive integer $m$, there exists a positive integer $n$ and a positive real number $c$ such that every element $f$ of $\mathcal{D}_K(n, c)$ can be written as a composition of four commutators; $f = [a_1, b_1][a_2, b_2][a_3, b_3][a_4, b_4]$, where each $a_i, b_i$ belongs to $\mbox{Diff}^\infty([0, 1])$ and satisfies 
\begin{equation}\label{ts_est}
    \supp(a_i), \supp(b_i) \subset (0, 1) \quad \mbox{and} \quad N_m(a_i - Id), N_m(b_i - Id) \lesssim N_n(f - Id)^{1/(m + 2)}.
\end{equation}
\end{thm}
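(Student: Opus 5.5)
This is Tsuboi's theorem, and the paper cites it from \cite{Ts} rather than proving it. Still, here is how I would reconstruct a proof, using the same mechanism as Sections \ref{S5}--\ref{S9}: commutators of small perturbations of the identity produce quadratic terms, and an iteration over scales handles the rest. The goal is to realise any small $f$ supported in $(\frac18,\frac78)$ exactly as a product of four commutators, with a fractional-power loss in the estimate.

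The first step is a \emph{fragmentation}. Using a partition of unity adapted to two overlapping families of disjoint intervals, together with the curtain trick of Section \ref{S3}, I would write $f = f_1 f_2$. Here each $f_i$ is a product of diffeomorphisms supported in pairwise disjoint intervals $I_{i,j}$, and $N_n(f_i - Id)\lesssim N_n(f-Id)$ by the composition estimates \eqref{3e3}, \eqref{3e10}. Since commutators supported in disjoint intervals can be merged into a single commutator, it suffices to write each piece supported in one interval as a product of two commutators with uniform estimates. That gives $2+2=4$ commutators in total.

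The second step treats a single interval with a ``displacement'' construction, which is the classical trick of Mather and Epstein. Pick $h$ supported in a slightly larger interval $I'$ that moves $I$ off itself, so that the conjugates $h^p I$, $p\geq 0$, are pairwise disjoint and accumulate at an endpoint of $I'$. For $g$ supported in $I$, set $G = \prod_{p\ge 0} h^p g_p h^{-p}$, where the $g_p$ are rescaled copies of $g$ or of a correction. Then $[G,h]$ telescopes to $g$ times a correction term. To make the infinite product smooth, the pieces $g_p$ must shrink in $C^m$ norm as they are squeezed near the accumulation point. This is exactly where the derivatives are lost: a piece of size $\delta$ compressed by a factor $\lambda$ has $N_m$ norm about $\lambda^{1-m}\delta$. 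Balancing this against the available decay $N_n(g-Id)\le c$ forces $n\gg m$.

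The third step handles the correction. I would choose $h$ itself of size $\sigma = N_n(f-Id)^{1/(m+2)}$ rather than of order $1$, so that $h$ and $G$ both carry the $\lesssim \sigma$ bound in \eqref{ts_est}. The residual error should then be quadratic-type and smaller, and I would absorb it by one further commutator, or by a fixed-point argument in a Fréchet space of diffeomorphisms supported in $I$. That is why two commutators per interval suffice.

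The main obstacle I expect is closing this fixed point: the map from $g$ to the correction loses derivatives (the standard $C^n\to C^m$ loss), so a plain contraction mapping argument fails. I would first try a Nash--Moser type scheme, or an explicit iteration over dyadic scales in the spirit of Sections \ref{S8}--\ref{S9}. In that scheme, each step decreases the sup norm geometrically while the high norms grow only polynomially, and interpolation between them yields the exponent $1/(m+2)$.
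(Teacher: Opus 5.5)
As you note, the paper only quotes this result from Tsuboi, remarking that the loss from $n$ to $m$ comes from a circle-conjugacy theorem. Your text is a plan rather than a proof, and there is a genuine gap: its central step, the displacement construction meant to produce the commutators, fails in $C^m$ for $m\geq 2$.

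\emph{The undamped product is not smooth.} If the $h^p(I)$ are pairwise disjoint, their lengths $\ell_p$ tend to $0$. Since they have bounded total length, the usual distortion bound gives $(h^p)'\approx \ell_p/|I|$ on $I$. So $h^pgh^{-p}-Id$ lives on an interval of length $\ell_p$, with sup norm $\gtrsim (\ell_p/|I|)\,\|g-Id\|_\infty$. Hence $N_m(h^pgh^{-p}-Id)\gtrsim \ell_p^{1-m}\|g-Id\|_\infty/|I|\to\infty$, and with all $g_p=g$ the product $G$ is not $C^2$. This blow-up is unbounded in $p$, so no finite number of extra derivatives of $g$ can pay for it. It is therefore not the source of $n\gg m$.

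\emph{Damping gives a first-order residual.} If you damp the $g_p$, the telescoping is exactly $[G,h]=g_0\prod_{p\geq1}h^p(g_pg_{p-1}^{-1})h^{-p}$. Because $g_p\to Id$, this correction is of first order in $g$, not quadratic. For instance, its $p=1$ factor is about $hg^{-1}h^{-1}$ when $g_1$ is much smaller than $g_0=g$.

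\emph{No fixed-point or Nash--Moser scheme rescues it.} The linearized operator $\phi\mapsto\phi-h_*\phi$ that such a scheme must invert has range of infinite codimension already in $C^1$. On the component of the support of $h$ containing $I$, the solutions are $\sum_{p\geq0}h^p_*\psi$ plus $h$-invariant fields. $C^1$ regularity at the two accumulation points then forces the $h$-invariant field $\sum_{p\in\bZ}h^p_*\psi$ into a space of dimension at most two.

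\emph{Scale mismatch.} A map displacing $I$ has $\|h-Id\|_\infty>|I|$. So an $h$ of size $\sigma\to0$, as in your third step, forces $|I|\lesssim\sigma$. The fragmentation of your first step then costs negative powers of $\sigma$, contrary to your claim $N_n(f_i-Id)\lesssim N_n(f-Id)$.

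The missing ingredient is the one the paper points to, a circle-conjugacy theorem. With $[u,\tau]=u\tau u^{-1}\tau^{-1}$, the equation $[u,\tau]=f$ is the conjugacy equation $u\tau u^{-1}=f\tau$. It becomes tractable when the displacement is replaced by a rotation $R_\alpha$ of a circle (after rescaling the support) with Diophantine $\alpha$. The linearized equation is then the cohomological equation $\phi-\phi\circ R_\alpha^{-1}=\psi-\bar\psi$, where $\bar\psi$ is the mean of $\psi$. This is solvable in $C^\infty$ with a finite loss of derivatives due to small divisors; that, not compression, is where $n\gg m$ comes from. The one-dimensional obstruction $\bar\psi$ is absorbed by a rotation parameter. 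The local KAM conjugacy theorem of Arnold and Herman then gives the exact nonlinear statement $uR_\alpha u^{-1}=R_\lambda fR_\alpha$, i.e.\ $R_\lambda f=[u,R_\alpha]$, with no need for a home-made fixed-point scheme. Two tasks remain: disposing of the small rotation $R_\lambda$, and passing from the circle back to diffeomorphisms compactly supported in $(0,1)$ with the bounds \eqref{ts_est}. That is where the count of four commutators and the exponent $1/(m+2)$ have to be earned, and your sketch leaves all of this open.
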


Some remarks are in order. To start with, apparently this is not the first known result about a representation of a compactly supported function with a small norm by a composition of commutators: later, while browsing, we came upon a result of Mather \cite{Mat}. We could also use it instead of Tsuboi's theorem, but we really like that, unlike Mather's variant, Theorem \ref{tts} involves only four commutators: four is a very small number. Next, it is important to say that, even though Theorem \ref{tts} is stated for $C^\infty$ diffeomorphisms, it also works for $C^n$ diffeomorphisms, and produces $a_i, b_i$ which are $C^m$, as announced. The gap between $n$ and $m$ comes from a so-called circle-conjugacy theorem, and GTP also remarked that it apparently can be improved, if the circle-conjugacy theorem is replaced by its more modern version by Marmi, Moussa and Yoccoz, see \cite{MMY}. Last, we have mentioned in Subsection \ref{S2S1} that the estimate on norms of the functions involved in our approximation algorithm and the resulting curves in Theorem \ref{t1} are better than what is provided by a finite decomposition. Indeed, compare the estimate $\sim N_k(\psi)^{1/2}$ we have in Theorem \ref{t1} with \eqref{ts_est}. So, our curves from the previous sections are ``flatter''.

So far what it is not yet clear what the decomposition in Theorem \ref{tts} with commutators has to
do with decompositions of the type \eqref{1a14}, because the commutators $[a_i, b_i]$ are not necessarily
done with antisymmetric functions of $\Psi(I)$, so we don't know that they are constructible. 
But there is a beautiful simple trick, that GPT calls ``the paired-slot construction'', which allows one
to transform commutators like $[a_i, b_i]$ into commutators of functions of $\Psi(I)$. This is probably
a standard technique, but we were unfortunately unable to trace it to an exact source.

Here is the idea.
Suppose $a= Id + \psi_a$ and $b= Id + \psi_b$ are supported in an interval $I_0$, and select another interval
$I_1$ of the same length and disjoint from $I_0$. We call $I_1$ a slot (for $I_0$). 
Call $x_0$ the center of mass of $I_0 \cup I_1$, and call $\wt \psi_a$ the function which is 
supported on $I_0 \cup I_1$, equal to $\psi_a$ on $I_0$,
and antisymmetric with respect to $x_0$. Then set $\wt a = Id + \wt \psi_a$ on $\R$
(if $a$ was initially defined on $\R$, we changed its values on $I_1$). 
By construction, $\wt \psi_a \in \Psi_0(\wh I_1)$, where $\wh I_1$ is the convex hull of $I_0 \cup I_1$, so it constructible.
Do the same thing for $b$, but with another slot $I_2$ that does not meet $I_0 \cup I_1$. This
gives another constructible function $\wt \psi_b \in \Psi_0(\wh I_2)$, where $\wh I_2$ is the convex hull of $I_0 \cup I_2$. 
The beauty of the trick is that 
\begin{equation} \label{12b2}
[\wt a, \wt b] = [a, b],
\end{equation}
because one can check carefully that $[\wt a, \wt b] = Id$ both on $I_1$ and $I_2$, thanks to $\wt a = Id$ on $I_2$ and $\wt b = Id$ on $I_1$. 
So our initial, apparently non-constructible,
commutator $[a, b]$ becomes constructible in $I$, provided that $I$ contains the support $I_0$
of $a$ and $b$, plus the two disjoint slots $I_1$ and $I_2$.

\ms
We are now ready to decompose our initial mapping $f = Id + \psi$, where $\psi$ is supported
on $I = [0,1]$, as a composition of $3$ curtain mappings,
followed by $4$ ``slotted'' constructible commutators. Recall from Section \ref{S3} that we we replaced our initial
target $f = f_\psi$ by a new one, $f_{\psi_1} = f_{\psi} \circ f_c^{-1}$ (see \eqref{2e4}), where 
$\psi_c \in \Psi(I)$ and $\psi_1$ is now supported on $\left[\frac{1}{30}, 1 - \frac{1}{30}\right]$
(see \eqref{2e7}). This is because we did not try to optimize, and instead we can decide to kill a portion of size 
$\theta = 1/5$ near $0$ and also near $1$. We can do this with a mapping $f_c$ near $0$ that lives on an interval of size a little more than $2/5$, where we need some space to do cut-off functions, and some extra space because we know 
that $f_c$ will possibly increase or decrease the images of intervals; this is the same argument as in Section \ref{S3}. 
We do the same near $1$, on an interval that does not meet the previous one,
and get a better curtain mapping $f_c$ such that $\psi_1$ is now supported on an interval of length $3/5$ centered
at $1/2$. If the reader does not like this, they can use the same mapping as in Section \ref{S3}, and will just need
to compose more than $3$ functions like $f_c$. 

So we define $f_{\psi_1} = f_{\psi} \circ f_c^{-1}$ as above, with $\psi_1$ supported on an interval of length $3/5$ centered at $1/2$, and such that $N_k(\psi_1) \leq C N_k(\psi)$ as in \eqref{2e9}. We do this twice more, and write
$\psi = f_{\psi_3} \circ f_{c_2} \circ f_{c_1} \circ f_{c}$, with constructible mappings $f_c$
and $\psi_3$ which is supported in an interval of length $27/125$ centered at $1/2$ and such that 
$N_k(\psi_3) \leq C N_k(\psi)$. Then we use Tsuboi result to write 
$f_{\psi_3} = [a_1, b_1][a_2, b_2][a_3, b_3][a_4, b_4]$, with mappings $a_i$ and $b_i$ 
supported in an interval $I_0$ of length $(8/6)(27/125) = 36/125$ centered at $1/2$. This is short enough to find two 
to slots for $I_0$, one on each side of $I_0$, that are contained in $I$. Thus each commutator 
$[a_i, b_i]$ is constructible, by \eqref{12b2}, and we get the desired decomposition of $f$ into
$19$ pieces ($4$ pieces for each commutator).

\msi
{\bf Guy David}, Universit\'e Paris-Saclay, CNRS, Laboratoire de math\'ematiques d'Orsay, 91405 Orsay, France 
{\sl and}  Department of Mathematics ETH Z\"urich, 8092, Switzerland; 

guy.david@universite-paris-saclay.fr

\smallskip\noindent
{\bf Polina Perstneva}, Universit\'e de Gen\`eve, Section de Math\'ematique, Switzerland; 

polina.perstneva@unige.ch

\end{document}